\theoremstyle{plain}
\newtheorem{theorem}{Theorem}
\numberwithin{theorem}{section}
\newtheorem{lemma}[theorem]{Lemma}
\newtheorem{proposition}[theorem]{Proposition}
\newtheorem{corollary}[theorem]{Corollary}
\theoremstyle{definition}
\newtheorem{example}[theorem]{Example}
\newtheorem{question}[theorem]{Question}
\newtheorem{remark}[theorem]{Remark}
\newcommand{\C}{{\mathbb C}}
\newcommand{\R}{{\mathbb R}}
\newcommand{\Z}{{\mathbb Z}}
\newcommand{\Q}{{\mathbb Q}}
\renewcommand{\P}{{\mathbb P}}
\newcommand{\s}{{\mathbb S}}
\begin{document}
\title{On the orders of periodic diffeomorphisms of $4$-manifolds}
\author{Weimin Chen}
\subjclass[2000]{Primary 57S15, 57R57, Secondary 57R17}
\keywords{Bound of automorphisms; smooth four-manifold; symplectic geometry}
\thanks{The author was supported in part by NSF grant DMS-0603932}
\date{}
\maketitle

\begin{abstract}
This paper initiated an investigation on the following question: Suppose a smooth $4$-manifold 
does not admit any smooth circle actions. Does there exist a constant $C>0$ such that the manifold
support no smooth $\Z_p$-actions of prime order for $p>C$? We gave affirmative results to this
question for the case of holomorphic and symplectic actions, with an interesting finding that the 
constant $C$ in the holomorphic case is topological in nature while in the symplectic case it involves
also the smooth structure of the manifold. 
\end{abstract}

\section{Introduction}

A classical theorem of Hurwitz says that, for a complex curve $\Sigma$ of genus
$g\geq 2$, the order of its automorphism group $\text{Aut}(\Sigma)$ satisfies the following 
topological bound:
$$
|\text{Aut}(\Sigma)|\leq 84(g-1)=42\; \text{deg }K_\Sigma.
$$
Various attempts have been made to generalize this result to higher-dimensional projective varieties.
For a minimal smooth projective surface $X$ of general type, Xiao obtained the optimal result in 
\cite{Xiao1, Xiao2} that $|\text{Aut}(X)|\leq 42^2 \; c_1(K_X)^2$, after a series of earlier work by 
Andreotti \cite{An}, Howard and Sommese \cite{HoSo}, Corti \cite{Corti}, Huckleberry and Sauer 
\cite{HuSa} and Xiao \cite{Xiao0}. For dimensions greater than $2$, see recent work 
of D.-Q. Zhang \cite{Zhang2} and the references therein. 

The purpose of this paper is to investigate generalizations of Hurwitz's theorem to finite automorphism
groups of smooth $4$-manifolds.  Our central theme may be summarized in the following

\vspace{2mm}

{\it Main Question: Suppose $X$ is a smooth $4$-manifold which does not admit any smooth circle
actions.  Does there exist a constant $C>0$ such that there are no nontrivial smooth
$\Z_p$-actions of prime order on $X$ provided that $p>C$? Moreover, assuming there is such a 
constant, what structures (e.g., homology, homotopy, smooth structures, etc.) of $X$ does it
depend on?
}
\vspace{2mm}

We remark that for locally linear topological actions the analogous question is already known to have 
a negative answer. Indeed, on the one hand, Edmonds in \cite{Ed} showed that for any simply connected 
$4$-manifold $X$ and prime number $p>3$, there exists a homologically trivial, locally linear, pseudofree topological $\Z_p$-action on $X$, and on the other hand, by work of Fintushel \cite{F, F1},  a simply connected $4$-manifold with non-zero signature and even intersection form does not admit any locally 
linear topological circle actions. 

In this paper, we establish the existence of such a constant $C$ for the more restricted classes of actions,
i.e., holomorphic and symplectic $\Z_p$-actions, hoping to shed some light to the problem in general. 
An interesting phenomenon we discovered is that the constants for these two classes of actions are of different nature; in the holomorphic case the constant $C$ depends only on the integral homology of the 
manifold, while in the symplectic case, it depends also on the smooth structure. 

Before we state the theorems, it is helpful to recall the relevant results regarding the existence of
smooth circle actions on $4$-manifolds. First, with the resolution of the $3$-dimensional Poincar\'{e}
conjecture by Perelman, a theorem of Fintushel in \cite{F1} may be strengthened to the following: 
a simply connected $4$-manifold admitting a smooth circle action must be a connected 
sum of copies of $\s^4$, $\pm \C\P^2$, and $\s^2\times\s^2$. 
For the non-simply connected case, one has the following useful criterion due to Baldridge \cite{Bald} : 
Let $X$ be a $4$-manifold admitting a smooth circle action with nonempty fixed point set. 
Then $X$ has vanishing Seiberg-Witten invariant when $b_2^{+}>1$, and when $b_2^{+}=1$ and $X$ is symplectic, $X$ is diffeomorphic to a rational or ruled surface. For fixed-point free smooth circle
actions, formulas relating the Seiberg-Witten invariants of the $4$-manifold and the quotient $3$-orbifold
were given in Baldridge \cite{B1,B2}. Note that if a $4$-manifold admits a fixed-point free circle
action, the Euler characteristic and the signature of the manifold must vanish. 

With the preceding understood, we first state the result for the case of holomorphic actions 
on complex surfaces --- these are the primary examples of smooth actions on $4$-manifolds. 
The proof of this theorem is based on the aforementioned Xiao's generalization \cite{Xiao1,Xiao2} of Hurwitz's theorem and the homological rigidity of holomorphic actions on elliptic surfaces as in
Ueno \cite{Ueno} and Peters \cite{Peters}. 

\begin{theorem}
Let $X$ be a compact complex surface with Kodaira dimension $\kappa(X)\geq 0$. Suppose 
$X$ does not admit any holomorphic circle actions. Then there exists a constant $C>0$ such that
there are no nontrivial holomorphic $\Z_p$-actions of prime order on $X$ provided that $p>C$. 
Moreover, the constant $C$ depends linearly on the Betti numbers of $X$ and the order of the
torsion subgroup of $H_2(X)$, i.e., there exists a universal constant $c>0$ such that
$$
C=c(1+b_1+b_2+|Tor\; H_2|).
$$
\end{theorem}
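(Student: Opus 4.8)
The plan is to reduce everything to the minimal model and then run the Enriques--Kodaira classification. Since $\kappa(X)\ge0$, $X$ is neither rational nor ruled, so it has a unique minimal model $X_0$; the contraction $X\to X_0$ is unique up to $\text{Aut}(X_0)$, so every automorphism of $X$ induces one of $X_0$, the resulting map $\text{Aut}(X)\to\text{Aut}(X_0)$ is injective (an automorphism restricting to the identity on $X_0$ has trivial derivative at each centre of $X\to X_0$, hence is the identity on $X$), and its image is the stabiliser of the cluster $Z\subset X_0$ of blown-up points, where $|Z|=b_2(X)-b_2(X_0)$. Thus a nontrivial $\Z_p$-action on $X$ yields a faithful $\Z_p$-action on $X_0$, preserving $Z$ when $X\ne X_0$. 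The case $\kappa(X_0)=2$ is then immediate from Xiao's bound $|\text{Aut}(X_0)|\le42^2\,c_1(K_{X_0})^2$: Noether's formula together with the behaviour of $c_1^2$ and $c_2$ under blow-up makes $c_1(K_{X_0})^2$ a linear function of the Betti numbers of $X$, so $p\le|\text{Aut}(X_0)|$ is bounded as required, and the hypothesis on circle actions is not even used.

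Suppose $\kappa(X_0)=0$, so that $X_0$ is a K3 surface, an Enriques surface, a complex torus, a hyperelliptic surface, or a Kodaira surface. In the first two cases the Betti numbers are universal, $\text{Aut}(X_0)$ acts on $H^2(X_0;\Z)$ with kernel of bounded order, and once $p$ exceeds that order $\Z_p$ injects into $GL_n(\Z)$ with $n=b_2(X_0)$; the characteristic polynomial of its generator is then divisible by the $p$-th cyclotomic polynomial, whence $p-1\le b_2(X_0)\le b_2(X)$. In each of the remaining cases $\text{Aut}^0(X_0)$ is a positive-dimensional complex torus, so $X_0$ carries a holomorphic circle action; the hypothesis then forces $X\ne X_0$, so $\text{Aut}(X)$ is the finite stabiliser of $Z$ in $\text{Aut}(X_0)$, and any $\Z_p$-action with $p>|Z|$ must fix a point of $X_0$. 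Taking that point as origin places $\Z_p$ inside the faithful isotropy action on $H^*(X_0;\Z)$, a finite subgroup of $GL_4(\Z)$, whose order is absolutely bounded. In all these sub-cases $p$ is linear in $b_2(X)$.

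The case $\kappa(X_0)=1$ is the heart of the matter. Here the pluricanonical maps realise a holomorphic elliptic fibration $\pi\colon X_0\to C$ which is canonical, hence $\text{Aut}(X_0)$-equivariant, giving a homomorphism $\rho\colon\text{Aut}(X_0)\to\text{Aut}(C)$. If $\rho$ is nontrivial on $\Z_p$, then $\Z_p$ acts faithfully on $C$ while permuting the finite critical set $S\subset C$ (the discriminant of $\pi$ together with the images of the multiple fibres) compatibly with the Kodaira types and multiplicities carried by its points; the number of singular fibres of any given type is at most $\chi_{\mathrm{top}}(X)$, the number of multiple fibres of any given multiplicity is controlled through the orbifold fundamental group of the base, hence through $|\text{Tor}\,H_1(X_0)|=|\text{Tor}\,H_2(X)|$, and when $g(C)\le1$ the translation part of $\text{Aut}(C)$ preserving the nonempty $S$ is finite while for $g(C)\ge2$ one has $|\text{Aut}(C)|\le84(g(C)-1)\le84(\tfrac12 b_1(X)-1)$; inspecting the $\Z_p$-orbits on $S$ in each base-genus case then bounds $p$ linearly in $\chi_{\mathrm{top}}(X)$, $b_1(X)$ and $|\text{Tor}\,H_2(X)|$. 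If $\rho$ is trivial on $\Z_p$, the action is fibre-preserving; the hypothesis now gives $\text{Aut}^0(X_0)=1$, which rules out the isotrivial and fibre-bundle degenerations, and, using the homological rigidity of holomorphic actions on elliptic surfaces of Ueno and Peters together with the elementary fact that a general fibre has at most six automorphisms fixing a point, for $p>6$ the action is by translations on the general fibre, so $\Z_p$ embeds into the torsion of the Mordell--Weil group of the jacobian fibration of $\pi$. A $p$-torsion section with $p>4$ meets some reducible fibre in a non-identity component, so that fibre has type $I_n$ with $p\mid n$ and hence $n\ge p$; the Shioda--Tate relation $\sum_v(m_v-1)\le b_2-2$ (with $m_v$ the number of components of the fibre $F_v$) then gives $p\le b_2-1$, again linear in the invariants of $X$.

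The step I expect to be the genuine obstacle is this last, fibre-preserving case in Kodaira dimension $1$: making the reduction to translation sections uniform across the isotrivial, fibre-bundle, non-K\"ahler and multiple-fibre sub-cases, and keeping the resulting estimate genuinely \emph{linear} in the Betti numbers and in $|\text{Tor}\,H_2|$ rather than merely finite. A secondary technical point of the same flavour is the linear control, via $|\text{Tor}\,H_2|$ and the orbifold fundamental group, of the number of multiple fibres entering the invariant set $S$; granted these, the rest is bookkeeping with the classification, the elementary representation theory of $GL_4(\Z)$, and the structure of $\text{Aut}(C)$.
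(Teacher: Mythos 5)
Your overall architecture --- pass to the minimal model, run Enriques--Kodaira, and in Kodaira dimension $1$ split according to whether $\Z_p$ acts trivially on the base of the canonical elliptic fibration --- coincides with the paper's, and your handling of general type and of $\kappa=0$ is essentially sound. But the step you yourself flag as ``the genuine obstacle'' is where the argument actually fails, and the missing idea is specific. In the fibre-preserving case, your Mordell--Weil/Shioda--Tate step presupposes that the torsion of the Mordell--Weil group of the Jacobian injects into the product of the component groups of the reducible fibres. That injectivity fails exactly for the isotrivial and $c_2=0$ fibrations (e.g.\ $E\times C$ has infinite Mordell--Weil torsion and no singular fibres at all), and these are not dispatched by ``the hypothesis gives $\text{Aut}^0(X_0)=1$'': the hypothesis concerns $X$, not $X_0$, and more importantly the problematic surfaces include minimal elliptic surfaces with $c_2=0$ carrying multiple fibres, which are not themselves fibre bundles and whose vertical translations need not come from Mordell--Weil sections of the Jacobian at all (near a multiple fibre of multiplicity $m$ there is an extra $\Z_m$ of fibrewise translations). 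The paper closes this case with an explicit equivariant inverse-logarithmic-transform argument: a fibrewise translation passes through log transforms, so such an $X_0$ either lies over a principal elliptic bundle --- and then inherits a holomorphic circle action, contradicting the hypothesis --- or has its translation group universally bounded. For $c_2>0$ the paper's endgame is also different from yours: it shows all fixed points are isolated of weights $(1,p-1)$ and invokes the $G$-signature theorem to force $g=1$. Note also that Shioda--Tate needs algebraicity, whereas the theorem must cover non-K\"ahler elliptic surfaces (the paper is explicit that this is why it does not simply quote Peters).

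The second gap is in the case where $\Z_p$ acts nontrivially on a base of genus $0$: counting $\Z_p$-orbits on the critical set $S$ yields nothing when $S$ consists precisely of the two fixed points of the induced rotation of $\P^1$ and neither fibre is multiple. That configuration is not excluded by Euler-characteristic or torsion counting; the paper kills it with a monodromy computation (its Lemma 5.3), showing that the two local monodromies would have to be infinite-order parabolic elements of $SL(2,\Z)$, one conjugate to (plus or minus) the inverse of the other, with widths summing to $c_2=12d>0$, which is impossible. Your sketch has no substitute for this. Both gaps are genuine rather than cosmetic: the paper's Remark 1.2(3) exhibits elliptic surfaces of Kodaira dimension $1$ with holomorphic $\Z_p$-actions for arbitrarily large $p$ and bounded Betti numbers, so the elliptic case is exactly where the constant is forced to see $|Tor\; H_2|$, and the ``bookkeeping'' you defer is where the content of the theorem lives.
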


\begin{remark}
(1) According to the Enriques-Kodaira classification of complex surfaces (cf. \cite{BPV}), a complex
surface of negative Kodaira dimension falls into two disjoint groups, the rational or ruled surfaces and the
surfaces of type VII (which are non-K\"{a}hler). The exclusion of the former is natural as each rational or ruled surface admits 
a smooth circle action, while the exclusion of the latter is due to lack of understanding. 

(2) The assumption of no holomorphic circle actions is potentially weaker than that of no smooth circle actions. On the other hand, for a non-K\"{a}hler complex surface, there is no easy criterion which 
excludes the existence of smooth circle actions. 

(3) The constant $C$ in Theorem 1.1 is sharp in the following sense. There are complex surfaces with
holomorphic $\Z_p$-actions whose Betti numbers or the order of the torsion subgroup of $H_2$ grow
linearly with $p$. For example, let $\Sigma$ be a complex curve of genus $g=p+1$ which admits a free
$\Z_p$-action, and let $\Sigma^\prime$ be a complex curve of genus $2$. Then the complex surface 
$\Sigma\times \Sigma^\prime$, which is a surface of general type and 
does not support any holomorphic circle actions, admits a 
holomorphic $\Z_p$-action. The Betti numbers of  $\Sigma\times \Sigma^\prime$ grow linearly with $p$. 

For an example which shows the necessity of inclusion of $|Tor \; H_2|$ in the constant $C$, we consider
the rational elliptic surface $X_0$ defined by the Weierstrass equation
$$
y^2z=x^3+v^5z^3.
$$
For any prime number $p\geq 5$, one can define an order-$p$ automorphism 
$g$ of $X_0$ as follows (cf. \cite{Zhang1}):
$$
g: (x,y,z;v)\mapsto (\mu_p^{-5} x,y,\mu_p^{-15}z;\mu_p^6v),\;\; 
\mu_p\equiv\exp(2\pi i/p).
$$
Note that $g$ preserves the elliptic fibration and leaves exactly the two 
singular fibers (at $v=0$ and $v=\infty$) invariant. For any $m>1$, let $X_{m,p}$ be the 
elliptic surface obtained from $g$-equivariantly performing log transforms to $X_0$ with 
multiplicity $m$ on a $g$-invariant set of regular fibers. 
Then $X_{m,p}$ has Kodaira dimension $1$ and does
not admit any holomorphic circle actions. Moreover, $X_{m,p}$ naturally inherits a holomorphic 
$\Z_p$-action from $X_0$. With this understood, note that 
$$
Tor \; H_2(X_{m,p})=(\Z_m)^{p-1}. 
$$
\end{remark}

In the above construction, if we replace the log transforms by the Fintushel-Stern knot surgery
\cite{FS}, we arrive at examples which show that in the symplectic case, Betti numbers and the 
order of torsion subgroup of $H_2$ alone are no longer sufficient. 
The point is that knot surgery does not change the integral homology of the manifold; in particular, 
it does not create torsion subgroups. Rather, it actually preserves the simply connectedness of the 
manifold.  We spell out the details in the following 

\begin{example} (Knot surgery on parallel copies of a regular fiber of $X_0$)

Let $X$ be a simply connected elliptic surface with a section (e.g. $X_0$), and let $T$ be a regular 
fiber in $X$. Then $T$ is c-embedded in the sense of Fintushel-Stern \cite{FS}, i.e., $T$ is contained 
in a neighborhood of a cusp fiber. To see this, by a theorem of Moishezon (cf. Friedmann-Morgan
\cite{FM}, Theorem 4.8 in \S 1.4 and Theorem 3.6 in \S 2.3), $X$ may be slightly perturbed through 
elliptic surfaces to an elliptic surface $X^\prime$ of only nodal singular fibers (i.e., of type $I_1$), 
and moreover, the monodromy representation of
$X^\prime$ can be put into a standard form. From this one can easily create a cusp fiber in $X$ by
combining an appropriately chosen pair of nodal singular fibers in $X^\prime$. This proves that 
$T$ is c-embedded. Moreover, $\pi_1(X\setminus T)$ is trivial because $X$ has a section so that
the meridian of $T$ bounds a disc in $X\setminus T$. 

These two conditions, i.e., $T$ is c-embedded and $\pi_1(X\setminus T)$ is trivial, are the assumptions
made in Fintushel-Stern \cite{FS} for the knot surgery, in particular, to ensure that the resulting 
$4$-manifold continues to be simply connected. The discussion in the previous paragraph shows 
that one
can perform knot surgery to $X$ along a regular fiber. However, if one performs repeated knot surgery
along parallel copies of a regular fiber, it is not clear that the resulting $4$-manifold is still simply 
connected (because the triviality of $\pi_1(X\setminus T)$ might not be preserved). 
But we should point out that regardless of the assumptions, it is easily seen from a
Mayer-Vietoris argument that knot surgery always preserves the integral homology of the manifold. 

The following observation was kindly pointed out to the author by an anonymous referee: {\it Performing 
repeated knot surgery along parallel copies is equivalent to performing a single knot surgery using the
connected sum of the knots}.  This observation can be easily seen from the following identification:
$$
Y_K \cup_\phi (\s^1\times [0,1]\times [0,1] )\cup_\psi Y_{K^\prime}= Y_{K\# K^\prime}, 
$$
where $Y_K$ stands for $\s^3$ with a regular neighborhood of a knot $K$ removed, and the maps 
$\phi$ identifies $\s^1\times [0,1]\times \{0\}$ to part of $\partial Y_K$ and $\psi$ identifies 
$\s^1\times [0,1]\times \{1\}$ to part of $\partial Y_{K^\prime}$, sending $\s^1$ to the meridian of the
knots. In conclusion, one can perform repeated knot surgery to $X$ along parallel copies of a regular fiber.
In particular, the resulting $4$-manifold continues to be homeomorphic to $X$. 

With the preceding understood, let $X_p$ be the symplectic $4$-manifold obtained 
by performing repeated knot surgery to the rational elliptic surface $X_0$ using the trefoil knot 
on $p$ copies of regular fibers of the elliptic fibration which are invariant under the order-$p$ 
automorphism $g$:
$$
g: (x,y,z;v)\mapsto (\mu_p^{-5} x,y,\mu_p^{-15}z;\mu_p^6v),\;\; 
\mu_p\equiv\exp(2\pi i/p).
$$
We require that the knot surgeries are done equivariantly with respect to $g$,
so that there is an induced symplectomorphism of order $p$ on $X_p$. By our previous discussion,
$X_p$ is homeomorphic to $X_0$.

To see that $X_p$ does not admit any smooth circle actions, we note that 
the canonical class of $X_p$ is given by the formula 
$$
c_1(K_{X_p})=(2p-1)\cdot [T], 
$$
where $[T]$ is the fiber class of the elliptic fibration (cf. \cite{FS}). 
Let $\omega$ be the symplectic structure on $X_p$, which pairs positively with $[T]$. Then
$(X_p,\omega)$ satisfies $c_1(K_{X_p})\cdot [\omega]>0$ and $c_1(K_{X_p})^2=0$. This 
implies that $X_p$ is neither rational nor ruled, cf. \cite{Li}. By Baldridge's theorem \cite{Bald}
or the strengthened version of the theorem of Fintushel in \cite{F1}, $X_p$ does not admit any 
smooth circle actions. 
 
We thus obtained, for any prime number $p\geq 5$, a symplectic 
$4$-manifold $X_p$ homeomorphic to the rational elliptic surface, 
which admits no smooth circle actions but has 
a periodic symplectomorphism of order $p$. This shows that Betti numbers and $|Tor\; H_2|$ 
alone are not sufficient 
in the case of symplectic $\Z_p$-actions. 
\end{example}

In the above example, note that the canonical class $c_1(K_{X_p})$ is a multiple class whose 
multiplicity, $2p-1$, grows linearly with $p$. To better exploit this fact, let's assume the symplectic 
structure $\omega$ on $X_p$ is integral, i.e., $[\omega]\in H^2(X_p;\Z)$. (This can be arranged.)
In particular, $[\omega]\cdot [T]\geq 1$. This gives rise to the following bound
$$
p\leq \frac{1}{2}(c_1(K_{X_p})\cdot [\omega]+1).
$$

Now we state the Main Theorem which established the constant $C$ for symplectic $\Z_p$-actions.
Let $(X,\omega)$ be a symplectic $4$-manifold such that $[\omega]\in H^2(X;\Q)$.  Denote by 
$N_\omega$ the smallest positive integer such that $[N_\omega\omega]$ is an integral class. We set
$$
C_\omega\equiv N_\omega c_1(K)\cdot [\omega],
$$ 
where $K$ is the canonical bundle of $(X,\omega)$. 

\begin{theorem}
{\em(}Main Theorem{\em)}  Let $(X,\omega)$ be a symplectic $4$-manifold with $b^{+}_2>1$ and
$[\omega]\in H^2(X;\Q)$, which does not satisfy the following condition: $X$ is minimal with vanishing
Euler characteristic and signature. Then there exists a constant $C>0$:
$$
C=c(1+b_1^2+b_2^2)C_\omega^2
$$
where $c>0$ is a universal constant,  such that there are no nontrivial symplectic $\Z_p$-actions 
of prime order on $(X,\omega)$ provided that $p>C$. 
\end{theorem}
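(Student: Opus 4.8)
The plan is to bound $p$ by a case analysis on the fixed-point set of the $\Z_p$-action. After averaging a compatible almost complex structure, we may assume the action is pseudo-holomorphic for some invariant $\J$; then $X^{\Z_p}$ is a disjoint union of isolated points and embedded $\J$-holomorphic (hence symplectic) surfaces, the canonical line bundle $K$ acquires a $\Z_p$-equivariant structure, and $[\omega]$ descends to an orbifold symplectic form $\omega_Y$ on $Y=X/\Z_p$ with $\pi^*\omega_Y=\omega$, where $\pi\colon X\to Y$ is the projection. If the action is free, then $\chi(X)=p\,\chi(Y)$ and, by the $G$-signature theorem, $\sigma(X)=p\,\sigma(Y)$, so $p$ divides both $\chi(X)$ and $\sigma(X)$; hence either $p=O(b_1+b_2)$, or $\chi(X)=\sigma(X)=0$, in which case the hypothesis forces $X$ to be non-minimal, and the orbit of a symplectic $(-1)$-sphere --- which cannot be invariant, since a cyclic action on $\s^2$ has a fixed point --- produces $p$ disjoint $(-1)$-spheres spanning a negative-definite subspace of $H_2(X;\Q)$, so $p\le b_2$.

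\emph{Reduction to the minimal case.} Assume now $X^{\Z_p}\neq\emptyset$. If $X$ is non-minimal, one blows down $\Z_p$-equivariantly along a maximal invariant family of disjoint exceptional spheres (each orbit being $p$ disjoint spheres, or one invariant sphere carrying fixed points); bounding the number of blow-downs by $b_2$ reduces the problem, at the cost of a factor polynomial in $b_1,b_2$, to the case that $X$ is minimal, $b_2^{+}>1$, and, by hypothesis, $(\chi(X),\sigma(X))\neq(0,0)$. By Taubes' theorem $K$ is a Seiberg--Witten basic class with $SW_X(K)=\pm1$; in particular $c_1(K)\cdot[\omega]\ge 0$, and, since $X$ is minimal with $b_2^{+}>1$, $c_1(K)^2=2\chi(X)+3\sigma(X)\ge 0$. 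If $c_1(K)$ is a torsion class, then $X$ is a symplectic Calabi--Yau surface, and together with $b_2^{+}>1$ and $(\chi,\sigma)\neq(0,0)$ this forces $X$ to have the rational homology of a K3 surface, and $p$ is then bounded, as in the holomorphic case, by a constant depending only on the homology of $X$. Hence we may assume $c_1(K)$ is not torsion.

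\emph{The generic case.} Let $\bar\Sigma_1,\dots,\bar\Sigma_s\subset Y$ be the images of the $2$-dimensional components of $X^{\Z_p}$ and $\Sigma=\bigcup_i\pi^{-1}(\bar\Sigma_i)$. Then $\pi$ is a degree-$p$ cyclic cover branched along $\bigcup_i\bar\Sigma_i$, with $c_1(K_X)=\pi^*c_1(K_Y)+(p-1)[\Sigma]$, and pairing with $[\omega]$ gives
\[
c_1(K_X)\cdot[\omega]=p\,\bigl(c_1(K_Y)\cdot[\omega_Y]\bigr)+(p-1)\sum_i[\bar\Sigma_i]\cdot[\omega_Y].
\]
Since each $\bar\Sigma_i$ is symplectic and $N_\omega[\omega]$ is integral, $[\bar\Sigma_i]\cdot[\omega_Y]\ge 1/N_\omega$; hence, if $\Sigma\neq\emptyset$ and $c_1(K_Y)\cdot[\omega_Y]\ge 0$, we obtain at once $p-1\le N_\omega\,c_1(K_X)\cdot[\omega]=C_\omega$. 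The same bound, up to a bounded factor, holds when $X^{\Z_p}$ consists of isolated points only --- in which case $\chi(X)$ is the positive number of fixed points --- upon replacing the displayed identity by the $\Z_p$-equivariant index theorem for the canonical spin$^c$ Dirac operator: Taubes' $SW_X(K)=\pm1$ pins down the equivariant index, which is a sum over the at most $|\chi(X)|$ fixed points of explicit trigonometric defects in the local rotation numbers, and, together with the $G$-signature relation for $\sigma(X)$ and the inequality $2\chi(X)+3\sigma(X)\ge 0$, these cannot be balanced for $p$ large unless $\chi(X)=\sigma(X)=0$, against the hypothesis.

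\emph{The main obstacle.} The genuinely hard case is when $Y$ has ``negative Kodaira dimension'', i.e.\ $c_1(K_Y)\cdot[\omega_Y]<0$; then $c_1(K_X)\cdot[\omega]\ge 0$ forces $\sum_i[\bar\Sigma_i]\cdot[\omega_Y]\ge \tfrac{p}{p-1}\,|c_1(K_Y)\cdot[\omega_Y]|$, so the branch locus is large and the linear estimate is lost. I would treat this by showing that such a symplectic orbifold is rational or ruled (an orbifold analogue of the classification behind \cite{Li}), pulling its pencil of spheres or its ruling back to a $\Z_p$-invariant symplectic surface $S\subset X$, and then playing off against each other (i) adjunction for $S$ and for the $\bar\Sigma_i$, (ii) the branched-cover formulas for $c_1(K_X)\cdot[\omega]$, $c_1(K_X)^2$ and $\chi(X)$, and (iii) the Taubes inequalities $c_1(K_X)\cdot[\omega]\ge 0$ and $c_1(K_X)^2\ge 0$, so as to bound the genus of $S$, the number, genera and self-intersections of the $\bar\Sigma_i$, and finally $p$, by a constant times $(1+b_1^2+b_2^2)C_\omega^2$ --- the squares reflecting the quadratic term $(p-1)^2$ in the branched-cover formula for $c_1(K_X)^2$ together with the homological bounds on the fixed-point data. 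This is the crux of the proof, and precisely where the smooth structure of $X$ enters: through $c_1(K)$ and the Seiberg--Witten input of Taubes and of Baldridge \cite{Bald}. That is why, in contrast with the holomorphic case, the constant cannot be purely homological.
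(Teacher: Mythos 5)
The central gap is the pseudofree case, i.e.\ when your branch locus $\Sigma$ is empty and all fixed points are isolated. This is the crux of the whole theorem, and your proposed treatment of it --- the equivariant index theorem for the canonical spin$^c$ Dirac operator together with the $G$-signature theorem, with the claim that the local rotation data ``cannot be balanced for $p$ large'' --- cannot work. By Edmonds' theorem (cited in the paper's introduction), every simply connected $4$-manifold admits homologically trivial, locally linear, pseudofree $\Z_p$-actions for every prime $p>3$; these actions satisfy every Atiyah--Singer-type fixed point formula, so no argument that uses only $\chi$, $\sigma$ and the local rotation numbers through index theory can bound $p$. Moreover, the nonequivariant fact $SW_X(K)=\pm 1$ does not ``pin down the equivariant index'': the equivariant index depends on the local representations at the fixed points, which are precisely the unknowns (the paper makes this point explicitly when explaining why its method breaks down for $b_2^{+}=1$). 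The paper's actual mechanism is different in kind: the \emph{equivariant} Taubes theorem produces a $G$-invariant configuration of $J$-holomorphic curves with $c_1(K)=\sum_i n_iC_i$ passing through essentially all fixed points; Lemma 1.6 converts the rotation numbers $(m_1,m_2)$ at a fixed point on $\cup_iC_i$ into a congruence $(a_1+1)m_1+(a_2+1)m_2\equiv 0\pmod p$ with $\text{int}_D(x)\geq a_1+a_2$; and Lemma 3.1 caps $\sum_i n_i$ (hence the local intersection numbers) by $C_\omega$. The bound on $p$ then comes from playing these congruences against adjunction and the intersection pattern of the spheres $C_i$. Your proposal contains no substitute for this step, and it is exactly where $C_\omega$ (hence the smooth structure) enters.

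Secondary issues: your ``main obstacle'' paragraph ($c_1(K_Y)\cdot[\omega_Y]<0$) is a research plan, not a proof, and the orbifold rational/ruled classification you invoke is not established; in the paper's organization this case never arises as such, since the case division is by $c_1(K)^2=0$ versus $c_1(K)^2>0$ and by the genera and tangencies of the Taubes curves (Lemmas 2.4--2.7 and Section 3). Your branched-cover computation in the ``generic case'' does have a counterpart in the paper --- Lemma 3.2 shows that a Taubes curve fixed pointwise forces $p\leq 1+C_\omega$ --- so that portion is sound in spirit, but it only disposes of the easiest configuration. Also, in the free case, homological triviality over $\Q$ (which holds for $p>1+b_2$ by the paper's Lemma 2.1) together with positivity of intersections forces a $J$-holomorphic $(-1)$-sphere to be invariant rather than to have $p$ disjoint translates spanning a negative-definite subspace; the conclusion you want still follows, but not by the argument you give.
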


A few remarks are in order. First, since $c_1(K)$ is a Seiberg-Witten basic class of $X$, the constant 
$C$ in Theorem 1.4 depends on the smooth structure of $X$ through the inclusion of $C_\omega$. 
Secondly, the assumption that $X$ is not of vanishing Euler characteristic and signature is potentially stronger than the assumption that $X$ admits no smooth circle actions. By Baldridge's theorem \cite{Bald},
$X$ admits at most fixed-point free smooth circle actions. However,  there is no easy criterion excluding 
this other than the assumption that $X$ is not of vanishing Euler characteristic and signature. 

\begin{question}
(1) How essential is the assumption $[\omega]\in H^2(X;\Q)$? (Notice that the constant $C_\omega$ depends on $[\omega]$ in a rather unstable fashion because of the factor $N_\omega$, therefore one 
can not remove the assumption that $[\omega]$ is rational by simply perturbing $\omega$ into one 
which is of rational class.)

(2) (Symplectic Hurwitz-Xiao, cf. \cite{Xiao1,Xiao2}) 
Let $(X,\omega)$ be a minimal symplectic $4$-manifold of Kodaira
dimension $2$ (i.e., $c_1(K)\cdot [\omega]>0$ and $c_1(K)^2>0$, cf. \cite{Li}). Does there exists a 
universal constant $c>0$, such that
$$
|G|\leq c \cdot c_1(K)^2
$$
for any finite group $G$ of symplectomorphisms of $(X,\omega)$? (Note that the manifolds $X_p$ 
in Example 1.3 are minimal (cf. Gompf \cite{G} and Usher \cite{Usher}) with $c_1(K_{X_p})^2=0$,
so they are not of Kodaira dimension $2$ and do not give counterexamples.) 

(3) For the general case of smooth $\Z_p$-actions, is the constant $C$ in the Main Question 
``describable" in terms of invariants of the manifold? Will the homotopy groups of the manifold play a role in the constant? What invariants of the smooth structure (e.g.  SW basic classes, SW invariants, etc.) will enter the constant and how do they enter the constant? (Note that Example 1.3 indicates the necessity of including information about the smooth structure in the constant. In the case of symplectic 
$\Z_p$-actions, the pairing $c_1(K)\cdot [\omega]$ is used.)

(4) Let $X$ be a simply connected smoothable $4$-manifold with even intersection form and non-zero signature. Does there exist a constant $C>0$ depending only on the topological type of $X$, such 
that for any prime number $p>C$, there are no $\Z_p$-actions on $X$ which are smooth with respect to
some smooth structure? (Note that the existence of smooth circle actions depends on the smooth 
structure of the manifold, as shown by Example 1.3. By Atiyah and Hirzebruch \cite{AH},
a spin $4$-manifold with non-zero signature does not admit smooth circle actions for any given smooth
structure.)
\end{question}

We remark that Question 1.5(4) is particularly interesting in the case of $K3$ 
surfaces. It is a long-standing problem as whether the $K3$ surface 
(with the standard smooth structure) admits any smooth finite group actions which are 
homologically trivial. It is well-known that there are no such holomorphic actions (cf. \cite{BPV}), and 
recently it was shown that there are no such symplectic 
finite group actions as well, cf. \cite{CK}. Since for $p>23$, any 
$\Z_p$-action of prime order on a homotopy $K3$ surface is homologically trivial, we 
see that Question 1.5(4) is related to the above homological rigidity problem of smooth actions 
on the $K3$ surface (or more generally, on a homotopy $K3$ surface). 

\vspace{3mm}

Next we discuss the main ideas and ingredients in the proof of the Main Theorem. 
The proof begins with a two-step reduction. First, based on an algebraic result concerning 
integral $\Z_p$-representations we showed that for sufficiently large $p$, say $p>2+b_1+b_2$,
any continuous $\Z_p$-action on $X$ must be homologically trivial over $\Q$
(see Lemma 2.1 for details). Secondly, we showed that one can always equivariantly blow down 
$X$ if it is not minimal. (This is trivial in the holomorphic case; for symplectic actions the proof 
is more involved and the claim is not true in the rational or ruled case, see Lemmas 2.2, 2.3.) 
After the two-step reduction, we may assume for simplicity that $X$ is minimal and the $\Z_p$-action
is homologically trivial over $\Q$.

The main technical ingredient is Taubes' work \cite{T} on the Seiberg-Witten and Gromov invariants 
of symplectic $4$-manifolds.  Let $(M,\omega)$ be
a symplectic $4$-manifold and $G$ be a finite group acting on $M$ smoothly and effectively which
preserves the symplectic form $\omega$. Denote by $b_G^{2,+}$ the dimension of the maximal 
subspace of $H^2(M;\R)$ over which the cup-product is positive and the induced action of $G$ 
is trivial. Then an equivariant version of Taubes' theorem $SW\Rightarrow Gr$ (cf. \cite{T}) 
may be applied to $(M,\omega)$ provided that $b_G^{2,+}\geq 2$. More precisely,  when 
$b_G^{2,+}\geq 2$, 
the $G$-equivariant Seiberg-Witten invariant is well-defined and is non-zero for the $G$-equivariant
canonical bundle $K_\omega$. This implies that, for any $r>0$, the $r$-version of Taubes' perturbed 
Seiberg-Witten equations has a solution $((\alpha,\beta),a)$ which is fixed under the action of $G$. 
Letting $r\rightarrow \infty$ as usual, the zero set $\alpha^{-1}(0)$ converges to a finite set of $J$-holomorphic curves $\{C_i\}$, such that $c_1(K_\omega)=\sum_i n_i C_i$ for some integers $n_i>0$. 
Here $J$ is any fixed choice of $G$-equivariant, $\omega$-compatible almost complex structure. 

Since $\alpha$ is fixed under $G$, it follows easily that the set $\cup_i C_i$ is $G$-invariant, and 
furthermore, $\cup_i C_i$ contains all the fixed points of $G$ except for those isolated ones at which
the representation of $G$ on the complex tangent space has determinant $1$. This in principle allows
one to analyze the action of $G$ near its fixed point set,  and sometimes even the induced representation
of $G$ on the second cohomology --- the two crucial pieces of informations about the action of $G$ 
on $M$ --- by looking at the restriction of the $G$-action in a neighborhood of $\cup_i C_i$. The main difficulty lies in the fact that in general not much can be said about the structure of the set 
$\cup_i C_i$. Unlike 
the non-equivariant case where $\{C_i\}$ can be made disjoint and embedded for a generic choice
of $J$, in the presence of group actions the set $\cup_i C_i$ could be very complicated in general
even with a choice of generic equivariant $J$, cf. \cite{C1}. The only exceptional case is when
$(M,\omega)$ is minimal and $c_1(K_\omega)^2=0$. This was explored in \cite{CK} in investigating 
the homological rigidity of symplectic finite group actions. For further applications concerning group
actions and exotic smooth structures, see \cite{CK1, CK2}. 

With the preceding understood, the proof of the Main Theorem relies in a crucial way on the following
technical lemma. Recall from \cite{T}, Section 5(e), that for any point $x\in \cup_i C_i$, and for any 
embedded $J$-holomorphic disk $D$ such that $D\cap (\cup_i C_i)=\{x\}$, a local intersection number
$\text{int}_D(x)$ is defined. (Note that it was shown in \cite{T} that such embedded $J$-holomorphic 
disks $D$ exist in abundance.) 

\begin{lemma}
Let $1\neq g\in G$ and $x\in \cup_i C_i$ such that $g\cdot x=x$. Suppose the action of $g$ near $x$ 
is given by
$$
g\cdot (z_1,z_2)=(\lambda^{m_1}z_1,\lambda^{m_2}z_2)
$$ 
in an $\omega$-compatible local complex coordinate system $(z_1,z_2)$ centered at $x$,
where $\lambda=\exp(2\pi i/m)$ with $m\equiv \text{order}(g)$, and $0\leq m_1,m_2<m$. 
Suppose further that $x\in\alpha^{-1}(0)$ for all $r>0$. Then there exist non-negative integers 
$a_1,a_2$ with $a_1+a_2 >0$ satisfying the congruence relation
$$
(a_1+1)m_1+(a_2+1)m_2 = 0 \pmod{m},
$$
such that 
$$
\text{int}_D(x)\geq a_1+a_2
$$ 
for any embedded $J$-holomorphic disk $D$. 
\end{lemma}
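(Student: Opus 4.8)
\emph{Plan of proof.} The idea is to reduce the statement to a local computation at $x$ and to extract there, from Taubes' analysis, a $g$-invariant local holomorphic model of $\alpha$; the congruence relation is precisely the constraint satisfied by the Taylor monomials of a $g$-invariant holomorphic section of $K_\omega$, and $a_1+a_2$ is the order of vanishing of such a section.

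\emph{Step 1: local normal form and the equivariance constraint.} Since $g$ has finite order with $g\cdot x=x$ and $J,\omega$ are $G$-invariant, I may take the $\omega$-compatible coordinates $(z_1,z_2)$ centered at $x$ --- already chosen so that $g\cdot(z_1,z_2)=(\lambda^{m_1}z_1,\lambda^{m_2}z_2)$ --- together with the corresponding equivariant local frame $dz_1\wedge dz_2$ of $K_\omega=\Lambda^{2,0}$. Recall that $\alpha$ is the ``leading'' component of the $G$-equivariant Seiberg--Witten spinor for the canonical $\mathrm{Spin}^c$ structure twisted by $K_\omega$; it is a $g$-invariant section of $K_\omega$ whose zero divisor, for $r$ large, converges to $\sum_i n_iC_i$ with $n_i>0$ and $[\sum_i n_iC_i]=c_1(K_\omega)$. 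If $s=\phi\cdot(dz_1\wedge dz_2)$ is any $g$-invariant section of $K_\omega$ near $x$, then $g^*(dz_1\wedge dz_2)=\lambda^{m_1+m_2}\,dz_1\wedge dz_2$ and $g^*s=s$ force $\phi(\lambda^{m_1}z_1,\lambda^{m_2}z_2)=\lambda^{-(m_1+m_2)}\phi(z_1,z_2)$, so a holomorphic monomial $z_1^{a_1}z_2^{a_2}$ can occur in $\phi$ only when $(a_1+1)m_1+(a_2+1)m_2\equiv 0\pmod m$.

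\emph{Step 2: the holomorphic model --- the main point.} I would then invoke Taubes' local analysis in \cite{T}, carried out $G$-equivariantly (all the input data --- the metric, $\omega$, $J$, the perturbations --- can be chosen $G$-invariant, and the $r\to\infty$ rescaling near $x$ is compatible with the linear $g$-action there), to the effect that near $x$ the behavior of $\alpha$ for $r$ large is modeled on a $g$-invariant holomorphic section $\sigma$ of $K_\omega$ with $\sigma(x)=0$, in such a way that for every embedded $J$-holomorphic disk $D$ with $D\cap(\cup_i C_i)=\{x\}$ one has $\mathrm{int}_D(x)\geq\mathrm{ord}_x(\sigma)$, where $\mathrm{ord}_x(\sigma)$ denotes the smallest total degree of a monomial appearing in $\sigma$ with respect to the frame $dz_1\wedge dz_2$. (The inequality holds because $\mathrm{int}_D(x)$ is at least the local intersection multiplicity of $D$ with $\sum_i n_iC_i$ at $x$, each $n_i\geq 1$, and restricting a holomorphic germ to a smooth disk through $x$ cannot lower its vanishing order.) Turning this heuristic into a rigorous $G$-equivariant statement --- in particular, producing $\sigma$ with the stated properties from Taubes' convergence and regularity results --- is the principal obstacle.

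\emph{Step 3: conclusion.} Because $x\in\alpha^{-1}(0)$ for all $r$, the model $\sigma$ vanishes at $x$, so $n_0:=\mathrm{ord}_x(\sigma)\geq 1$. Pick a monomial $z_1^{a_1}z_2^{a_2}$ of $\sigma$ of total degree $n_0$. Applying Step 1 to the $g$-invariant section $\sigma$ of $K_\omega$, it satisfies $(a_1+1)m_1+(a_2+1)m_2\equiv 0\pmod m$, and clearly $a_1,a_2\geq 0$ with $a_1+a_2=n_0>0$. By Step 2, $\mathrm{int}_D(x)\geq n_0=a_1+a_2$ for every embedded $J$-holomorphic disk $D$, which is the assertion of the lemma.
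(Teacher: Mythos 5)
Your proposal follows essentially the same route as the paper: produce a local holomorphic model for $\alpha$ near $x$, read off the congruence from $g$-equivariance of that model relative to the weight $\lambda^{-(m_1+m_2)}$ on the fiber of $K_\omega$, and bound $\text{int}_D(x)$ below by the vanishing order. Your Step 1 and Step 3 match the paper's computation exactly. The ``principal obstacle'' you flag in Step 2 is precisely what the paper supplies from Taubes \cite{T}: $\text{int}_D(x)$ is defined as the limit of $\int_D\frac{i}{2\pi}F_{a_n}$ (Proposition 5.6), Lemma 5.8 gives $\text{int}_D(x)\geq m_0$ where $m_0$ is the local intersection of a generic complex line with $\alpha_0^{-1}(0)$, and $\alpha_0$ is obtained by pulling back the solutions to a Gaussian coordinate system at $x$, rescaling by $\sqrt{r_n}$, and passing to the $C^\infty$-limit, which solves the $r=1$ equations on $\C^2$ and is a holomorphic section (with polynomial zero set) for the holomorphic structure defined by $a_0$; since the rescaling commutes with the linear $g$-action, this limit is the $g$-invariant model you postulate, and $m_0\geq a_1+a_2$ for any lowest-degree monomial. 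So your outline is correct, but a complete proof requires these specific inputs from Taubes rather than the general heuristic that $\text{int}_D(x)$ dominates the local intersection of $D$ with $\sum_i n_iC_i$.
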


\begin{remark}
When the representation of $g$ on the complex tangent space of $x$ has determinant $\neq 1$, i.e.,
when $m_1+m_2\neq 0\pmod{m}$, the assumption that $x\in\alpha^{-1}(0)$ for all $r>0$ is 
automatically satisfied. This is because when $m_1+m_2\neq 0\pmod{m}$,
the representation of $g$ on the fiber of the $G$-equivariant canonical bundle $K_\omega$ at $x$ is 
non-trivial, so that $\alpha(x)=0$ has to be true since $\alpha$ is fixed by $g$.

\end{remark}

The following recipe will be used frequently in determining the local 
intersection number $\text{int}_D(x)$: suppose a branch of $\cup_i C_i$
near $x$ is parametrized by a holomorphic map over a neighborhood of 
$0\in\C$ which is given in local coordinates by
$$
z_1=z^l,\; z_2=cz^{l^\prime}+ \cdots \mbox{ (higher order terms)}, 
$$
where $l^\prime>l$ unless $l=1$ and $c=0$, and suppose the multiplicity 
of the branch is $n$, then the contribution of the branch to $\text{int}_D(x)$
is equal to $nl$ provided that the $J$-holomorphic disk $D$ is not tangent
to $z_2=0$ at $x$ (i.e., the tangent space of the branch at $x$). See
Theorem 7.1 in Micallef and White \cite{MW}.

\vspace{3mm}

We end with a few remarks about the case of $b_2^{+}=1$ (for simplicity we assume the manifold 
is simply connected or is at least of $b_1=0$). Note that, in particular,  a simply connected, 
symplectic $4$-manifold with $b_2^{+}=1$ is homeomorphic to a rational surface, and when the manifold 
does not admit a smooth circle action, the smooth structure must be an exotic one which is characterized 
by $c_1(K)\cdot [\omega]>0$ and $c_1(K)^2\geq 0$ assuming the minimality of the
manifold (cf. \cite{Li}). 

Our proof of the Main Theorem broke down in the case of $b_2^{+}=1$, even
though the main line of arguments continues to work in this case. The missing
ingredient is the equivariant version of Taubes' theorem, i.e., for any
$r>0$, the $r$-version of Taubes' perturbed Seiberg-Witten equations 
associated to the equivariant canonical bundle has a solution which is 
fixed under the group action. Notice that with the condition 
$c_1(K)\cdot [\omega]>0$, one can argue using the wall-crossing formula
that the (non-equivariant) $r$-version of Taubes' perturbed 
Seiberg-Witten equations associated to the square of the canonical bundle
has a solution for sufficiently large $r>0$ provided that the dimension 
of the corresponding Seiberg-Witten moduli space is non-negative (which is
equivalent to $c_1(K)^2\geq 0$). One could argue similarly using wall-crossing
to get an equivariant version of this result which would be a good 
substitute of Taubes' theorem for our purpose, but unfortunitely the 
non-negativity of the dimension of the corresponding moduli space of 
equivariant Seiberg-Witten equations is much harder to come by; the 
calculation of the dimension requires knowledge about the group action near 
the fixed point set, which is not known a priori in general except for the 
case of a homology $\C\P^2$ due to the work of Edmonds and Ewing \cite{EE}.

\begin{theorem}
Let $X$ be a smooth $4$-manifold which is a homology $\C\P^2$. Then for
any symplectic structure $\omega$ with $c_1(K)\cdot [\omega]>0$, there
exists a constant $C>0$ such that there are no nontrivial symplectic 
$\Z_p$-actions of prime order on $X$ for $p>C$.
\end{theorem}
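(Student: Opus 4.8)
The plan is to follow the main line of argument for the Main Theorem, but with the Edmonds--Ewing classification of locally linear $\Z_p$-actions on homology $\C\P^2$'s supplying the two ingredients that were unavailable in general when $b_2^{+}=1$. Since $X$ is a homology $\C\P^2$ we have $b_1=0$ and $b_2=1$, so for any prime $p>3$ Lemma 2.1 already forces any continuous $\Z_p$-action to act trivially on $H_*(X;\Q)$; in fact here it acts trivially on $H_*(X;\Z)$ as well, since $H_2(X;\Z)=\Z$ admits no nontrivial automorphism of odd prime order. Thus from the start we may assume $p>3$ and that the action is homologically trivial. Because the action is homologically trivial, $b_G^{2,+}=b_2^{+}=1$, so Taubes' equivariant $SW\Rightarrow Gr$ machinery does not apply directly; this is exactly the gap described in the paragraph preceding the theorem.

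The first key step is to close that gap using wall-crossing. With $c_1(K)\cdot[\omega]>0$ and $X$ a homology $\C\P^2$ (so $c_1(K)^2>0$ automatically, as $c_1(K)$ is a nonzero multiple of the generator with positive square; note $X$ is automatically minimal), the non-equivariant $r$-version of Taubes' perturbed Seiberg--Witten equations associated to $K_\omega$ has solutions for all large $r$ by the usual wall-crossing argument, since the relevant moduli-space dimension is $\tfrac14(c_1(K)^2 - (2\chi+3\sigma)) \geq 0$. To promote this to an equivariant statement one needs the $G$-equivariant moduli space to have non-negative expected dimension, and this is precisely where Edmonds--Ewing \cite{EE} enters: their classification pins down the local rotation data $(m_1,m_2)$ at the (necessarily pseudofree, isolated) fixed points of a locally linear homologically trivial $\Z_p$-action on a homology $\C\P^2$, via the $G$-signature theorem, and this is exactly the data entering the equivariant dimension formula. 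So one computes the equivariant index, checks it is $\geq 0$ using the Edmonds--Ewing constraints, and concludes by equivariant wall-crossing that the $r$-version equations associated to $K_\omega$ have a $G$-fixed solution $((\alpha,\beta),a)$ for all large $r$. Letting $r\to\infty$ produces, just as in the $b_2^{+}>1$ case, a $G$-invariant finite set of $J$-holomorphic curves $\{C_i\}$ with $c_1(K_\omega)=\sum_i n_i C_i$, $n_i>0$.

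From here the argument runs parallel to the Main Theorem. The set $\cup_i C_i$ is $G$-invariant and contains all fixed points of $G$ except isolated ones with determinant-$1$ isotropy representation; but on a homology $\C\P^2$ the Edmonds--Ewing data shows that, for $p$ large, at least some fixed point has determinant $\neq 1$ (one cannot have all three fixed points with trivial-determinant rotation numbers while satisfying the $G$-signature relations), so $\cup_i C_i$ is nonempty and carries a fixed point $x$ with $x\in\alpha^{-1}(0)$ for all $r$. Apply Lemma 1.6 at such an $x$: there are non-negative integers $a_1,a_2$, not both zero, with $(a_1+1)m_1+(a_2+1)m_2\equiv 0\pmod p$ and $\mathrm{int}_D(x)\geq a_1+a_2$. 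On the other hand the total homological intersection $c_1(K_\omega)\cdot[\text{fiber of }D]$ is bounded above by a constant multiple of $c_1(K)\cdot[\omega]$ after rescaling, exactly as in the rational-class bookkeeping in the $b_2^{+}>1$ proof, so $\sum_x \mathrm{int}_D(x)$ is bounded by $\mathrm{const}\cdot C_\omega$ (or, absorbing $N_\omega$ and $b_2=1$, by an explicit function of $c_1(K)\cdot[\omega]$). Combining these two inequalities, and using that the congruence $(a_1+1)m_1+(a_2+1)m_2\equiv 0\pmod p$ with $1\le m_1,m_2<p$ forces $a_1+a_2$ to grow linearly in $p$ once the $m_i$ are controlled by Edmonds--Ewing, yields $p\leq C$ for an explicit $C$ built from $c_1(K)\cdot[\omega]$.

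The main obstacle I expect is verifying the non-negativity of the equivariant Seiberg--Witten moduli-space dimension: this requires feeding the precise Edmonds--Ewing solution set of admissible rotation numbers into the equivariant index formula and checking the sign uniformly in $p$, and it is the one place where the homology-$\C\P^2$ hypothesis is genuinely used rather than a convenience. A secondary technical point is ensuring the $r\to\infty$ convergence of $\alpha^{-1}(0)$ and the abundance of equivariant embedded $J$-holomorphic disks $D$ go through with $b_G^{2,+}=1$; these should follow from Taubes' analysis in \cite{T} essentially verbatim once the existence of the $G$-fixed solution is in hand, but they deserve a careful statement.
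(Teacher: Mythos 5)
Your overall architecture matches the paper's: prove an equivariant existence result for Taubes' perturbed Seiberg--Witten equations via wall-crossing, feed the Edmonds--Ewing fixed-point data into the equivariant index to verify non-negativity of the moduli-space dimension, and then run the Main Theorem's local-intersection argument through Lemma 1.6. But there is a genuine gap in the wall-crossing step as you have set it up: you apply it to the canonical bundle $K_\omega$ itself. The wall-crossing formula relates $SW^G(E)$ to $SW^G(K-E)$, so for $E=K$ the conjugate class is the trivial class $0$, whose invariant in the Taubes chamber is $\pm 1$ (the empty curve); the relation $|SW^G(K)\pm SW^G(0)|=1$ is then consistent with $SW^G(K)=0$ and gives you nothing. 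The paper instead takes $E=2K$: there the conjugate class is $-K$, and $SW^G(-K)=0$ precisely because $c_1(K)\cdot[\omega]>0$ forbids $J$-holomorphic representatives of $-c_1(K)$, so $SW^G(2K)=\pm 1$ once $d(2K)\geq 0$. This changes the downstream bookkeeping: the limit curves satisfy $2c_1(K)=\sum_i n_iC_i$, and the congruence in Lemma 1.6 must be replaced by $(a_1+2)m_1+(a_2+2)m_2\equiv 0\pmod p$ (likewise the applicability condition becomes $2(m_1+m_2)\neq 0\pmod p$, which for odd $p$ is equivalent to the original). The verification $d(2K)\geq 0$ is then a genuine computation with the Edmonds--Ewing rotation numbers $(a,b),(p-a,b-a),(p-b,p+a-b)$ — the paper's Lemma 4.4 and the ensuing $\delta$-sum identities — and is the technical heart of the proof; you correctly flag it as the main obstacle but leave it unverified.

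A secondary gap: you invoke Edmonds--Ewing for the "(necessarily pseudofree, isolated) fixed points" without justifying pseudofreeness. Their theorem assumes the action is pseudofree; for a symplectic $\Z_p$-action this must be proved, and the paper does so (Lemma 4.1) by showing a two-dimensional fixed component would be a homologically nontrivial, hence positive-square, symplectic sphere $Y$ with $c_1(K)\cdot Y>0$, violating the adjunction inequality — this is exactly where the hypothesis $c_1(K)\cdot[\omega]>0$ is used a second time. (The same lemma disposes of $p=2$ via a mod-$2$ intersection count, consistent with your standing assumption $p>3$.)
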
  

\begin{remark}
(1) Theorem 1.8 holds true more generally when $X$ is only a $\Q$-homology
$\C\P^2$ provided that the $\Z_p$-actions are pseudofree, i.e., having only 
isolated fixed points. The primary examples of a symplectic $\Q$-homology
$\C\P^2$ with $c_1(K)\cdot [\omega]>0$ are complex surfaces of general type
with $p_g=0$ and $c_1^2=9$, known as fake projective planes. Many of them
have nontrivial automorphism groups which always give pseudofree actions, 
cf. \cite{Km}. 

(2) Since $b_2=b_2^{+}=1$ in this case, one can always rescale $\omega$
so that $[\omega]$ is a generater of $H^2(X)$. With this choice of $\omega$,
$N_\omega=1$ and $c_1(K)\cdot [\omega]=3$, so that the constant $C$ in
Theorem 1.8 is in fact independent of $\omega$. It is an interesting 
problem to find out the optimal value of $C$ in Theorem 1.8.
\end{remark}     

An earlier version of this paper, under a slightly different title `` On the orders of 
periodic symplectomorphisms of $4$-manifolds'', has appeared as MPIM-Leipzig Preprint no. 30/2009.

\vspace{3mm}

The organization of this paper is as follows. Section 2 consists of 
a set of preliminary lemmas preparing for the proof of the Main Theorem. 
In particular, it contains the proof of Lemma 1.6. Section 3 is devoted 
to the proof of the Main Theorem. The proof of Theorem 1.8 is given 
in Section 4, and the proof of Theorem 1.1 is given in Section 5.

\section{Preliminary lemmas} 

\begin{lemma}
Let $M$ be a compact $n$-manifold with a continuous $\Z_p$-action. For any $0\leq k\leq n$,
the induced action on $H_k(M;\Q)$ must be trivial if $p>1+b_k$, where $b_k$ is the k-th Betti
number of $M$. 
\end{lemma}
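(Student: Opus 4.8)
The plan is to reduce the statement to an elementary fact about rational representations of the cyclic group $\Z_p$. A continuous $\Z_p$-action on the compact manifold $M$ induces, by functoriality, an action on the finite-dimensional $\Q$-vector space $V\equiv H_k(M;\Q)$; concretely, a generator of $\Z_p$ acts by a $\Q$-linear automorphism $T$ with $T^p=\mathrm{id}$. The first step is to observe that this makes $V$ a module over the group algebra $\Q[\Z_p]\cong \Q[x]/(x^p-1)$, and then to analyze the structure of that algebra.

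Next I would invoke the factorization $x^p-1=(x-1)\Phi_p(x)$ over $\Q$, where $\Phi_p(x)=1+x+\cdots+x^{p-1}$ is the $p$-th cyclotomic polynomial. The key input is that $\Phi_p$ is irreducible over $\Q$ (Eisenstein's criterion, applied after the substitution $x\mapsto x+1$), so that $x-1$ and $\Phi_p$ are coprime irreducibles and the Chinese Remainder Theorem yields a ring isomorphism $\Q[x]/(x^p-1)\cong \Q\times \Q(\zeta_p)$, with $\Q(\zeta_p)=\Q[x]/\Phi_p(x)$ a field of degree $p-1$ over $\Q$. Correspondingly $V$ splits $\Z_p$-equivariantly as $V=V_0\oplus V_1$, where $T$ acts as the identity on $V_0$ and $V_1$ is naturally a $\Q(\zeta_p)$-vector space; being a vector space over a field of degree $p-1$, it satisfies $\dim_\Q V_1=(p-1)\dim_{\Q(\zeta_p)}V_1$, in particular $(p-1)\mid \dim_\Q V_1$.

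Finally I would conclude with a dimension count. If the induced action on $V$ is nontrivial, then $V_1\neq 0$, so $b_k=\dim_\Q V\geq \dim_\Q V_1\geq p-1$, that is, $p\leq 1+b_k$. Taking the contrapositive, $p>1+b_k$ forces $V_1=0$, i.e.\ the action on $H_k(M;\Q)$ is trivial. I do not expect any genuine obstacle here: the only point that needs to be cited rather than proved on the spot is the classical irreducibility of $\Phi_p$ over $\Q$, and the finite-dimensionality of $H_k(M;\Q)$ is guaranteed by the compactness of $M$; everything else is linear algebra over $\Q$ combined with the Chinese Remainder Theorem.
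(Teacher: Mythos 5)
Your proof is correct, and it takes a genuinely more elementary route than the paper. The paper invokes the classification of integral $\Z_p$-representations (Curtis--Reiner, Theorem 74.3): the lattice $H_k(M)/\mathrm{Tor}\,H_k(M)$ decomposes as $\Z[\Z_p]^r\oplus\Z^t\oplus\Z[\mu_p]^s\oplus J^e$ with $J$ an ideal of $\Z[\mu_p]$, giving $b_k=rp+t+(s+e)(p-1)$, and $p>1+b_k$ then forces $r=s=e=0$. You instead work entirely over $\Q$, where the group algebra splits as $\Q[x]/(x^p-1)\cong\Q\times\Q(\zeta_p)$ by irreducibility of $\Phi_p$, so that $V=V_0\oplus V_1$ with $(p-1)\mid\dim_\Q V_1$; nontriviality of the action forces $V_1\neq 0$ and hence $b_k\geq p-1$. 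Since the lemma only concerns the action on $H_k(M;\Q)$, your rational argument fully suffices, and it replaces a deep integral classification theorem (whose proof involves ideal classes of $\Z[\mu_p]$) with Maschke-level linear algebra plus Eisenstein. The two arguments are of course compatible: tensoring the paper's integral decomposition with $\Q$ collapses it to yours, with $\dim_\Q V_0=r+t$ and $\dim_{\Q(\zeta_p)}V_1=r+s+e$, and both yield the identical numerical bound. The only thing the paper's heavier machinery buys is information about the integral lattice structure, which is not needed for this statement.
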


\begin{proof}
The $\Z_p$-action induces an integral $\Z_p$-representation on the free part of $H_k(M)$,
i.e., $H_k(M)/Tor H_k(M)$.  Then $H_k(M)/Tor H_k(M)$ as an integral $\Z_p$-representation 
is isomorphic to a direct sum 
$$
\Z[\Z_p]^r\oplus \Z^t\oplus \Z[\mu_p]^s \oplus J^e
$$
for some integers $r,t,s,e\geq 0$, where the group ring $\Z[\Z_p]$ is the regular representation of 
$\Z$-rank $p$, $\Z$ is the trivial representation of $\Z$-rank $1$, $\Z[\mu_p]$ is the representation 
of cyclotomic type of $\Z$-rank $p-1$, which is the kernel of the augmentation homomorphism 
$\Z[\Z_p]\rightarrow \Z$ (here $\mu_p\equiv \exp(2\pi i/p)$), and $J$ is some non-zero 
ideal of $\Z[\mu_p]$ which is also of $\Z$-rank $p-1$. See Curtis-Reiner \cite{CR}, 
Theorem (74.3) in p. 508. This gives 
$$
b_k=rp+t+(s+e)(p-1). 
$$
Now if $p>1+b_k$, the only solution to the above equation is $r=s=e=0$ and $t=b_k$,
which means that the induced action of $\Z_p$ on $H_k(M)/Tor H_k(M)$ is trivial. 
\end{proof}

\begin{lemma}
Let $G$ be a finite group acting on a symplectic $4$-manifold $(M,\omega)$, 
preserving the form $\omega$ and inducing a trivial action on $H_2(M;\Q)$. 
Then there exists a symplectic $4$-manifold $(M^\prime,\omega^\prime)$,
which is a symplectic blowdown of $(M,\omega)$, with an induced $G$-action 
preserving the form $\omega^\prime$ and inducing a trivial action on 
$H_2(M^\prime;\Q)$, such that for any $G$-equivariant, $\omega^\prime$-compatible
almost complex structure $J$, $M^\prime$ contains no embedded $J$-holomorphic 
$2$-spheres with self-intersection $-1$. Furthermore, if $[\omega]$ is 
rational, so is $[\omega^\prime]$, and one has 
$C_{\omega^\prime}\leq C_\omega$.
\end{lemma}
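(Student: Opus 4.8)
The idea is to perform an equivariant symplectic blowdown repeatedly, each time reducing the number of available embedded $J$-holomorphic $(-1)$-spheres, while keeping track of the relevant invariants. So let me outline the steps.

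First I would set up the basic mechanism. Suppose $(M,\omega)$ is not already in the desired form, so there exists a $G$-equivariant, $\omega$-compatible almost complex structure $J_0$ and an embedded $J_0$-holomorphic $2$-sphere $E$ with $E\cdot E=-1$. Since the $G$-action is homologically trivial over $\Q$, the class $[E]\in H_2(M;\Q)$ is fixed by $G$, hence the $G$-orbit $\{g\cdot E : g\in G\}$ consists of embedded $J_0$-holomorphic spheres all in the class $[E]$; by positivity of intersections (Gromov/McDuff), any two distinct ones either coincide or are disjoint, and since $[E]^2=-1<0$ two such spheres in the same class cannot be disjoint, forcing $g\cdot E = E$ for all $g$, i.e. $E$ is $G$-invariant. (If $G$ does not act homologically trivially on $H_2(M;\Z)$ but only on $H_2(M;\Q)$, one still gets that $[E]$ is $G$-fixed, which suffices for this argument up to a finite-index subgroup; since $[E]$ is primitive and $[E]^2=-1$, in fact $[E]$ is $G$-fixed over $\Z$ as well.) Now $G$ acts on the exceptional sphere $E$, and one can collapse it equivariantly: the Gompf–McDuff symplectic blowdown along $E$ can be carried out $G$-equivariantly, producing $(M_1,\omega_1)$ with an induced $G$-action preserving $\omega_1$. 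The key point is that blowdown strictly decreases $b_2$ (by one), so this process must terminate after finitely many steps at some $(M',\omega')$ which admits no embedded $J$-holomorphic $(-1)$-sphere for \emph{any} equivariant $\omega'$-compatible $J$.

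Second I would verify the bookkeeping claims. Triviality of the induced action on $H_2(M';\Q)$: since $H_2(M';\Q)$ is a $G$-invariant subspace of $H_2(M;\Q)$ (identified via the blowdown with $[E]^\perp / \langle[E]\rangle$, or more concretely with the orthogonal complement of the exceptional classes), triviality descends. Rationality of $[\omega']$: the symplectic blowdown can be performed with any sufficiently small weight $\delta>0$ on the exceptional sphere, and $[\omega'] = [\omega]$ under the natural identification away from $E$ with $[\omega']\cdot[E'] $ adjusted by $\delta$; choosing $\delta$ rational (which is possible since the size of $E$, namely $[\omega]\cdot[E]$, can be taken rational when $[\omega]$ is) keeps $[\omega']$ rational. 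Finally the inequality $C_{\omega'}\le C_\omega$: one has $c_1(K_{\omega}) = c_1(K_{\omega'}) + \sum (\text{exceptional classes})$ under blowup (equivalently $c_1(K_{\omega'})$ is the image of $c_1(K_\omega)$), and since the exceptional classes pair positively with $[\omega]$ while $c_1(K_{\omega'})\cdot[\omega'] = c_1(K_\omega)\cdot[\omega] - \sum \delta_i \le c_1(K_\omega)\cdot[\omega]$, together with $N_{\omega'} \le N_\omega$ for an appropriate rational choice of the $\delta_i$ (e.g. $\delta_i$ with denominator dividing $N_\omega$), we get $C_{\omega'} = N_{\omega'}\, c_1(K_{\omega'})\cdot[\omega'] \le N_\omega\, c_1(K_\omega)\cdot[\omega] = C_\omega$. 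One should be a little careful that the choices of $\delta_i$ can simultaneously be made rational with controlled denominator and small enough for all the blowdowns; since there are only finitely many, this is not an issue.

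The main obstacle I anticipate is the \emph{equivariance of the symplectic blowdown} itself — i.e., showing that when $G$ acts on a symplectic manifold leaving an embedded symplectic (indeed $J$-holomorphic) $(-1)$-sphere $E$ invariant, one can excise a $G$-invariant standard neighborhood of $E$ symplectomorphic to a neighborhood of the exceptional divisor in the blowup of $\C^2$, with $G$ acting linearly, and glue in a standard ball $G$-equivariantly. This requires an equivariant version of the symplectic neighborhood theorem for $E$ (which follows by averaging, using that $G$ preserves both $E$ and $\omega$ and acts on the normal bundle $\O(-1)\to E$ compatibly), plus the observation that the linearized $G$-action on the $\O(-1)$-neighborhood extends over the ball. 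The remaining delicate point is ensuring, after blowdown, that \emph{no} equivariant compatible $J$ admits a $(-1)$-sphere; but this is automatic once $b_2$ can no longer drop, because the existence of an embedded $J$-holomorphic $(-1)$-sphere for \emph{some} equivariant $J$ would, by the first step, allow another equivariant blowdown, contradicting termination. Thus the argument is really an induction on $b_2(M)$, with the equivariant blowdown as the inductive step and everything else routine bookkeeping.
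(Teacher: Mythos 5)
Your proposal is correct and follows essentially the same route as the paper: use homological triviality plus positivity of intersections to show any embedded $J$-holomorphic $(-1)$-sphere is $G$-invariant, blow it down equivariantly, induct on $b_2$ for termination, and track $N_\omega$ and $c_1(K)\cdot[\omega]$ through the blowdown to get $C_{\omega'}\leq C_\omega$. The only quibble is the phrase ``any two distinct ones either coincide or are disjoint'' --- distinct $J$-holomorphic spheres may well intersect; the correct statement (which you in effect use) is that distinct irreducible curves have non-negative intersection number, contradicting $(g\cdot E)\cdot E=[E]^2=-1$.
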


\begin{proof}
Suppose there exists a $G$-equivariant, $\omega$-compatible almost 
complex structure $J$ on $M$ such that $M$ contains an embedded $J$-holomorphic 
$2$-sphere $C$ with $C^2=-1$. Since $G$ acts trivially on $H_2(M;\Q)$,  the class of 
$g\cdot C$ equals the class of $C$ for any $g\in G$. This implies $g\cdot C=C$, i.e., $C$ 
is invariant under $G$, because otherwise 
$(g\cdot C)\cdot C\geq 0$ by the positivity of intersections of 
$J$-holomorphic curves, which
then contradicts the identity $(g\cdot C)\cdot C=C^2=-1$. 

We $G$-equivariantly blow down $(M,\omega)$ along $C$ and obtain a 
symplectic $4$-manifold
$(M^\prime,\omega^\prime)$, which inherits a symplectic $G$-action 
from $(M,\omega)$. Clearly
the induced action of $G$ on $H_2(M^\prime;\Q)$ is also trivial. 

We shall prove that if $[\omega]$ is a rational class, so is 
$[\omega^\prime]$, and one has 
$C_{\omega^\prime}\leq C_\omega$. To see this, let $x\in M^\prime$ be 
the image of $C$
under the blowing down. Then there exist small neighborhoods $U$ of $C$ 
in $M$ and $U^\prime$
of $x$ in $M^\prime$, such that $(M\setminus U,\omega)$ and 
$(M^\prime\setminus U^\prime,
\omega^\prime)$ are symplectomorphic, cf. \cite{MS}. It follows easily 
that $[N_\omega\omega^\prime]$
is an integral class. (Here recall that $N_\omega$ 
is the smallest positive integer
such that $[N_\omega\omega]$ is an integral class.) This 
implies $N_{\omega^\prime}\leq N_\omega$, and since 
$c_1(K_{M^\prime})\cdot [\omega^\prime]<c_1(K_{M})\cdot [\omega]$,
one clearly has $C_{\omega^\prime}\leq C_\omega$. 

This process will terminate because $b_2(M^\prime)=b_2(M)-1$. At the 
end we obtain a symplectic $4$-manifold, still denoted by 
$(M^\prime,\omega^\prime)$, such that for any $G$-equivariant,
$\omega^\prime$-compatible almost complex structure $J$,  $M^\prime$ 
contains no embedded 
$J$-holomorphic $2$-spheres with self-intersection $-1$. 
\end{proof}

A natural question is whether the manifold $(M^\prime,\omega^\prime)$ is
minimal. Recall that a symplectic $4$-manifold is said to be minimal if there 
exist no embedded symplectic $2$-spheres with self-intersection $-1$. 
The next lemma gives an answer to this question.

\begin{lemma}
Let $(M,\omega)$ be a symplectic $4$-manifold which is not rational nor
ruled. If there exists a $\omega$-compatible almost complex structure
$J_0$ such that $M$ contains no embedded $J_0$-holomorphic $2$-spheres 
with self-intersection $-1$, then $(M,\omega)$ is minimal. 
\end{lemma}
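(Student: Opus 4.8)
The plan is to prove the contrapositive: assuming $(M,\omega)$ is \emph{not} minimal, I will show that \emph{every} $\omega$-compatible almost complex structure $J$ on $M$ admits an embedded $J$-holomorphic $2$-sphere of self-intersection $-1$, so that no $J_0$ as in the hypothesis can exist. Since $(M,\omega)$ is not minimal it contains an embedded symplectic $(-1)$-sphere, whose class $E\in H_2(M;\Z)$ satisfies $E^2=-1$ and, by the adjunction formula, $c_1(K_M)\cdot E=-1$. The first and deepest step is to establish that the Gromov invariant $Gr(E)$ is nonzero. For this, symplectically blow down $(M,\omega)$ along $E$ to a symplectic $4$-manifold $(M_1,\omega_1)$, which is again neither rational nor ruled; the Seiberg--Witten invariant of its canonical $\mathrm{Spin}^c$ structure is then $\pm1$ (when $b_2^+=1$ this is to be read off in Taubes' chamber, where the hypothesis that $(M,\omega)$ is not rational nor ruled is exactly what is needed), and the blow-up formula together with Taubes' theorem $SW\Rightarrow Gr$ \cite{T} yields $Gr(E)=\pm1$.

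The second step upgrades this to $J$-holomorphic curves for the given, possibly non-generic, $J$. Since $Gr(E)\neq 0$ and $E^2=c_1(K_M)\cdot E=-1$, for a generic $\omega$-compatible almost complex structure the class $E$ is represented by an embedded holomorphic $(-1)$-sphere; moreover the set of those $\omega$-compatible almost complex structures for which $E$ is represented by some (stable, genus-zero) holomorphic curve is closed, by Gromov's compactness theorem, and contains the residual set of generic ones, hence is the whole space. Thus $J$ itself admits a $J$-holomorphic stable genus-zero curve in class $E$; writing its underlying somewhere-injective components as $J$-holomorphic $2$-spheres in classes $A_1,\dots,A_k$ with positive multiplicities $m_1,\dots,m_k$, one has $\sum_a m_a A_a=E$. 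Because $(M,\omega)$ is not rational nor ruled, no $J$-holomorphic sphere has non-negative self-intersection (structure theory of symplectic $4$-manifolds, cf.\ \cite{MS}), so $A_a^2\leq -1$ for every $a$.

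The third step is an elementary adjunction count. For each $a$, the adjunction formula for somewhere-injective genus-zero $J$-holomorphic curves \cite{MW} gives $c_1(K_M)\cdot A_a=-2-A_a^2+2\delta_a$ for an integer $\delta_a\geq 0$, and the curve is an embedded $(-1)$-sphere precisely when $A_a^2=-1$ and $\delta_a=0$. If no $A_a$ were an embedded $(-1)$-sphere, then $c_1(K_M)\cdot A_a\geq 1$ whenever $A_a^2=-1$ (for then $\delta_a\geq 1$), and $c_1(K_M)\cdot A_a\geq 0$ whenever $A_a^2\leq -2$; summing over $a$ with the positive weights $m_a$ gives $c_1(K_M)\cdot E\geq 0$, contradicting $c_1(K_M)\cdot E=-1$. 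Hence some $A_a$ is represented by an embedded $J$-holomorphic $2$-sphere of self-intersection $-1$, which proves the contrapositive.

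I expect the one genuinely delicate point to be the first step: extracting $Gr(E)\neq 0$, and with it a $J$-holomorphic representative of $E$ for \emph{all} $\omega$-compatible $J$, relies on Taubes' Seiberg--Witten theory and its refinement when $b_2^+=1$. Granting that input, the remaining steps are routine applications of Gromov's compactness theorem, McDuff's structure theory, and the adjunction formula.
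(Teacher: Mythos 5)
Your first step and the reduction to a $J_0$-holomorphic stable curve in the class $E=[\Sigma]$ agree in substance with the paper, which however obtains $Gr(E)=\pm 1$ more directly: an embedded $J$-holomorphic $(-1)$-sphere is automatically a regular and, by positivity of intersections, unique point of its moduli space (Corollary 3.3.4 of McDuff--Salamon), so no Seiberg--Witten blow-up formula or chamber bookkeeping is needed for this. The real divergence, and the gap, is in your second step. The claim that on a symplectic $4$-manifold which is not rational nor ruled no somewhere-injective $J$-holomorphic sphere has non-negative self-intersection is false as stated: an elliptic $K3$ surface carries nodal rational curves of square $0$ (the nodal fibers), which are somewhere-injective holomorphic spheres with $A^2=0$ on a manifold that is neither rational nor ruled. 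McDuff's structure theorem concerns \emph{embedded} symplectic spheres of non-negative square. What your adjunction count in step three actually needs is that every component $A_a$ with $c_1(K_M)\cdot A_a<0$ be an embedded $(-1)$-sphere; by adjunction the alternatives are an embedded sphere with $A_a^2\geq 0$ (which McDuff's theorem does exclude) or a somewhere-injective sphere with $\delta_a\geq 1$ double points and $A_a^2\geq 2\delta_a-1\geq 1$. Ruling out the latter for the given, non-generic $J_0$ is not covered by the structure theory you cite; it is a genuinely nontrivial statement about non-generic pseudo-holomorphic curves, and as written your argument does not close.

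The paper sidesteps this issue entirely. It represents $c_1(K_M)=\sum_i n_iC_i$ (or $c_1(2K_M)=\sum_i n_i C_i$ when $b_2^{+}=1$, which is where the hypothesis ``not rational nor ruled'' actually enters, via $Gr(K_{M'})\neq 0$ or $Gr(2K_{M'})\neq 0$ for the minimal model together with the blow-up formula) by $J_0$-holomorphic curves, notes that $c_1(K_M)\cdot[\Sigma]<0$ forces $c_1(K_M)\cdot\Gamma_j<0$ for some component $\Gamma_j$ of the degenerate representative of $[\Sigma]$, and then uses positivity of intersections to conclude that $\Gamma_j$ must coincide with one of the $C_i$; the inequality $c_1(K_M)\cdot C_i\geq n_iC_i^2$ then yields $C_i^2<0$ \emph{before} adjunction is applied, so adjunction immediately forces $C_i$ to be an embedded $(-1)$-sphere, with no case analysis on singular components of positive square. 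To salvage your route you should either replace your step two by this pairing argument, or invoke with proof (or a precise reference) the refined statement that on a non-rational, non-ruled symplectic $4$-manifold every somewhere-injective $J$-holomorphic sphere $C$ satisfies $c_1(K_M)\cdot C\geq -1$ with equality only for embedded $(-1)$-spheres.
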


\begin{proof}
Suppose to the contrary that $(M,\omega)$ is not minimal, and let 
$\Sigma$ be an embedded symplectic $2$-sphere with self-intersection 
$-1$ in $M$. Then there 
exists a $\omega$-compatible almost complex structure $J^\prime$ on $M$ 
such that $\Sigma$ is $J^\prime$-holomorphic. On the other hand, 
by Corollary 3.3.4 in McDuff-Salamon \cite{MS1}, for any $J$ an
embedded $J$-holomorphic $2$-sphere with self-intersection $-1$ 
is always a regular point in the corresponding moduli space of 
$J$-holomorphic curves, which is also the only point in the moduli
space because of positivity of intersections of $J$-holomorphic 
curves. This implies that the Gromov invariant counting 
$J$-holomorphic $2$-spheres in the class of $\Sigma$ equals $\pm 1$. 
In particular, there exists a finite set of $J_0$-holomorphic curves 
$\{\Gamma_j\}$ such that 
$$
[\Sigma]=\sum_j l_j\Gamma_j \mbox{ for some integers } l_j>0.
$$

Consider first the case where $b_2^{+}>1$. By Taubes \cite{T} the 
Gromov invariant $Gr(K_M)\neq 0$, so that there exists a finite set
of $J_0$-holomorphic curves $\{C_i\}$ such that 
$$
c_1(K_M)=\sum_i n_i C_i \mbox{ for some integers } n_i>0. 
$$
Since $c_1(K_M)\cdot [\Sigma]=-1<0$, there exists a $j$ such that 
$c_1(K_M)\cdot \Gamma_j<0$. Because of positivity of intersections of 
pseudo-holomorphic curves, there must be an $i$ such that $\Gamma_j=C_i$. 
Now $c_1(K_M)\cdot C_i<0$ implies that 
$C_i^2\leq \frac{1}{n_i}c_1(K_M)\cdot C_i<0$,
and by the adjunction inequality one has 
$$
0\leq \text{genus}(C_i)\leq C_i^2+c_1(K_M)\cdot C_i+2\leq (-1)+(-1)+2=0,
$$
which implies that $C_i$ is an embedded $2$-sphere with self-intersection 
$-1$, contradicting the assumption on $J_0$. Hence $(M,\omega)$ must be 
minimal in this case.

The case of $b_2^{+}=1$ is slightly more involved. First, we symplectically
blow down $(M,\omega)$ to get a minimal symplectic $4$-manifold 
$(M^\prime,\omega^\prime)$. Since $(M,\omega)$ is not rational 
nor ruled,  either $Gr(K_{M^\prime})\neq 0$ or $Gr(2K_{M^\prime})\neq 0$ 
(cf. Prop 5.2 in \cite{LL1} and Prop. 4.1 in \cite{Li}). On the other hand, by the blowup formula 
of Gromov invariant \cite{LL}, if we denote by $E_1, \cdots, E_n\in H^2(M)$ 
the exceptional divisors of the symplectic blowdown 
$\pi:M\rightarrow M^\prime$ (i.e., the Poincar\'{e} duals of the symplectic
$(-1)$-spheres in $M$), then in the former case
$$
Gr(K_M)=Gr(\pi^\ast(K_{M^\prime})-\sum_{s=1}^n E_s)=
Gr(K_{M^\prime})\neq 0,
$$
which implies that $(M,\omega)$ is minimal as we argued for $b^{+}_2>1$. In the latter 
case, we have 
$$
Gr(2K_M+\sum_{s=1}^n E_s)=Gr(\pi^\ast(2K_{M^\prime})-\sum_{s=1}^n E_s)=
Gr(2K_{M^\prime})\neq 0,
$$
which implies that there exists a finite set of $J_0$-holomorphic curves 
$\{\hat{C}_k\}$ such that 
$$
c_1(2K_M+\sum_{s=1}^n E_s)=\sum_k \hat{n}_k \hat{C}_k 
\mbox{ for some integers } \hat{n}_k>0. 
$$
Now observe that the Gromov invariant $Gr(-E_s)\neq 0$ for each $s$.
(Note that we have proved this fact for the Poincar\'{e} dual of $\Sigma$.) 
Hence for each $s$, there exists a finite set of $J_0$-holomorphic curves 
$\{\Gamma_{js}\}$ such that 
$$
c_1(-E_s)=\sum_j l_{js} \Gamma_{js} \mbox{ for some integers } l_{js}>0.
$$
Putting these together, one has 
$$
c_1(2K_M)=\sum_k \hat{n}_k \hat{C}_k +\sum_{j,s} l_{js} \Gamma_{js} 
=\sum_i n_i C_i
$$
for a finite set of $J_0$-holomorphic curves $\{C_i\}$ and integers
$n_i>0$. Again, since $c_1(2K_M)\cdot [\Sigma]=-2<0$, there exists 
a $j$ such that $c_1(2K_M)\cdot \Gamma_j<0$. Then there must be an $i$ 
such that $\Gamma_j=C_i$, and as we argued earlier, $c_1(K_M)\cdot C_i<0$ 
implies that $C_i$ is an embedded $2$-sphere with self-intersection $-1$, 
contradicting the assumption on $J_0$. This proves that $(M,\omega)$ 
is also minimal in the case of $b_2^{+}=1$. 
\end{proof}

We remark that Lemma 2.3 is false if one drops the assumption 
that $(M,\omega)$ 
is not rational nor ruled as shown by the following example: the Hirzebruch 
surface $F_3$ contains no $(-1)$-holomorphic curves but it is not minimal
as a symplectic $4$-manifold. (Thanks to Tian-Jun Li for pointing out an
error in the original version of the paper and communicating this example 
to me.)

Let $G\equiv \Z_p$. With the preceding understood
(i.e., Lemma 2.1, Lemma 2.2 and Lemma 2.3), it suffices, for the proof of the Main
Theorem, to consider the following simplified version: $(X,\omega)$ is minimal and 
the $\Z_p$-action is homologically trivial over $\Q$. Moreover, it suffices to show that
there is a universal constant $c>0$ such that $p$ must be bounded from above by 
$$
C\equiv c (1+c_1(K)^2)^2 C_\omega^2.
$$
Indeed, given any $(X,\omega)$, we assume $p>5(1+b_1(X)+b_2(X))$ (note that, particularly, 
$p>5$). Then by Lemma 2.1, the $\Z_p$-action 
is trivial on $H_k(X;\Q)$ for any $0\leq k\leq 4$. Furthermore, by Lemma 2.2 and Lemma 2.3, 
one can $G$-equivariantly blow down $X$ until becoming minimal. Call the resulting 
symplectic $4$-manifold $(X^\prime,\omega^\prime)$. Then
$$
c_1(K_{X^\prime})^2\leq c_1(K_X)^2+b_2(X). 
$$
Our claim follows from the fact that $c_1(K_X)^2\leq 10(1+b_1(X)+b_2(X))$ and that 
$C_{\omega^\prime}\leq C_\omega$ (cf. Lemma 2.2). 

Now since $b_G^{2,+}=b_2^{+}>1$, the equivariant version of Taubes' theorem in
\cite{T} applies here, and as we have explained earlier in Section 1, 
after fixing a generic choice of 
$G$-equivariant, $\omega$-compatible almost complex structure $J$, 
there is a finite set of 
$J$-holomorphic curves $\{C_i\}$, such that $c_1(K)=\sum_i n_i C_i$ 
for some integers $n_i>0$.
Furthermore, the set $\cup_i C_i$ is $G$-invariant, and $\cup_i C_i$ 
contains all the fixed points 
of $G$ except for those isolated ones at which the representation 
of $G$ on the complex tangent 
space has determinant $1$. Notice that since $(X,\omega)$ is minimal, 
$c_1(K)\cdot C_i\geq 0$ and $c_1(K)\cdot C_i\geq C_i^2$ for any $i$.
(We remark that by the equivariant symplectic neighborhood theorem, 
$J$ may be taken to be integrable in a neighborhood of the fixed point set, 
even though $J$ has to be chosen to be generic in order to rule out certain 
possibilities.) 

In the next four lemmas (i.e., Lemmas 2.4-2.7), we assume that the $G$-action is
pseudofree. This means that none of the $J$-holomorphic curves $C_i$ is fixed under the action.
(In fact the non-pseudofree case can be easily eliminated, cf. Lemma 3.2.)

\begin{lemma}
(1) If there exists a $C_i$ with $\text{genus}(C_i)\geq 2$, then $p\leq 82 c_1(K)^2$.

(2) If there exists a $C_i$ with $\text{genus}(C_i)=1$ and $c_1(K)\cdot C_i\geq 1$, 
then $p\leq c_1(K)^2$.
\end{lemma}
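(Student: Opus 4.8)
The plan is to extract everything from the decomposition $c_1(K)=\sum_i n_iC_i$ together with the two standing facts recorded just above Lemma~2.4: $n_i\ge 1$ for all $i$, and, by minimality, $c_1(K)\cdot C_i\ge 0$ and $c_1(K)\cdot C_i\ge C_i^2$ for every $i$. Applying adjunction to the somewhere-injective $J$-holomorphic curve $C_i$ gives $c_1(K)\cdot C_i\ge 2\,\text{genus}(C_i)-2-C_i^2$, and feeding in $C_i^2\le c_1(K)\cdot C_i$ yields the inequality I will use throughout,
$$
c_1(K)\cdot C_i\ \ge\ \text{genus}(C_i)-1 .
$$
In particular $c_1(K)\cdot C_i\ge 1$ under the hypotheses of either part ($\text{genus}(C_i)\ge 2$ in (1); assumed outright in (2)). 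Since $c_1(K)^2=\sum_j n_j\,c_1(K)\cdot C_j$ is a sum of non-negative terms, I have $c_1(K)^2\ge n_i\,c_1(K)\cdot C_i$, and more generally, whenever the $G$-orbit of $C_i$ consists of $\ell$ distinct curves, $c_1(K)^2\ge \ell\cdot n_i\,c_1(K)\cdot C_i$, because $g$ is a $J$-holomorphic diffeomorphism permuting the $C_j$ and preserving multiplicities (the limiting configuration $\cup_jC_j$ and the $n_j$ being $G$-invariant, as recorded above), while $c_1(K)$ is $G$-invariant on cohomology.

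Since $G=\Z_p$ has prime order, the $G$-orbit of $C_i$ has length $1$ or $p$. If it has length $p$, then $c_1(K)^2\ge p\cdot n_i\,c_1(K)\cdot C_i\ge p$, so $p\le c_1(K)^2$ and both claims follow with room to spare. So I may assume $C_i$ is $G$-invariant. Because the action is pseudofree, no nontrivial element of $\Z_p$ fixes $C_i$ pointwise, so $\Z_p$ acts faithfully on $C_i$, hence faithfully and biholomorphically — with respect to the complex structure induced by $J$ — on the normalization $\widetilde C_i$, a smooth complex curve of genus $h:=\text{genus}(C_i)$.

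In case (1), $h\ge 2$, and a Hurwitz-type bound for the faithful $\Z_p$-action on $\widetilde C_i$ — either the classical $|\text{Aut}(\widetilde C_i)|\le 84(h-1)$, or, the group being cyclic of prime order, the sharper Riemann--Hurwitz estimate $p\le 2h+1$ — is linear in $h-1$; combined with $h-1\le c_1(K)\cdot C_i\le c_1(K)^2$ and $c_1(K)^2\ge 1$, this gives $p\le 82\,c_1(K)^2$. In case (2), $h=1$, so $\widetilde C_i$ is a torus; the Riemann--Hurwitz relation $0=2h-2=p(2h_0-2)+\sum_{\text{branch pts}}(p-1)$ shows that a $\Z_p$-action on a torus with a ramification point has $p\in\{2,3\}$, so — since we have already reduced to $p>5$ — the $\Z_p$-action on $\widetilde C_i$ is free. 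Then the line bundle $N_\nu:=\nu^\ast TX/T\widetilde C_i$, where $\nu:\widetilde C_i\to X$ is the normalization, is $\Z_p$-equivariant over the free $\Z_p$-space $\widetilde C_i$, hence descends to $\widetilde C_i/\Z_p$, so $p$ divides $\deg N_\nu=-c_1(K)\cdot C_i$; as $c_1(K)\cdot C_i\ge 1$ this forces $c_1(K)\cdot C_i\ge p$, whence $p\le c_1(K)^2$.

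The routine ingredients — adjunction for immersed curves, the $G$-invariance of $\cup_iC_i$ and of the $n_i$ (already available), and the degree-divisibility for an equivariant bundle over a free quotient — I will treat briefly. The genuinely delicate point is the $G$-invariant genus-$1$ case: one must rule out an invariant $J$-holomorphic torus carrying a ``silent'' free $\Z_p$-action, and it is precisely the Riemann--Hurwitz dichotomy (forcing freeness once $p>5$) together with the divisibility $p\mid\deg N_\nu$ that handles it. I expect this step, rather than the orbit-counting or the Hurwitz estimate, to be the main obstacle.
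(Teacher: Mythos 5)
Your argument is correct, and it splits into the same three subcases as the paper: the non-invariant orbit-counting step and the Hurwitz bound in the genus $\geq 2$ invariant case coincide with the paper's proof essentially verbatim. Where you genuinely diverge is the invariant genus-$1$ subcase of part (2). The paper disposes of it by an equivariant transversality argument: since $p>5$, Riemann--Hurwitz forces the $\Z_p$-action on the invariant torus to be free, so $\text{genus}(C_i/G)=1$ and the moduli space of $G$-invariant $J$-holomorphic curves at $C_i$ has dimension $-\frac{2}{p}c_1(K)\cdot C_i<0$; for a generic $G$-equivariant $J$ such a $C_i$ simply does not exist. You instead keep the curve and extract a divisibility: the action on the normalization $\widetilde C_i$ is free, so every $\Z_p$-equivariant bundle over $\widetilde C_i$ descends to the quotient and has degree divisible by $p$; applied to $\nu^\ast TX$ (or to your normal bundle, after the routine twist at branch points of $d\nu$, whose total order is itself divisible by $p$ by freeness) this gives $p\mid c_1(K)\cdot C_i$, hence $c_1(K)\cdot C_i\geq p$ and $p\leq c_1(K)^2$. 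Both arguments are sound. Yours buys independence from the genericity of $J$ and yields the quantitative conclusion $p\leq c_1(K)^2$ uniformly across all subcases, which is slightly cleaner to quote; the paper's fits the equivariant-transversality template it reuses elsewhere (Lemmas 2.6 and 3.3) and eliminates the configuration outright rather than bounding $p$. One small caution: in part (1) the classical bound $|\mathrm{Aut}|\leq 84(h-1)$ alone gives only $84\,c_1(K)^2$, so to land on the stated constant $82$ you do need the sharper prime-cyclic estimate $p\leq 2h+1$ (or simply note $2h+1\leq 82(h-1)$ for $h\geq 2$), exactly as you indicate.
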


\begin{proof}
(1) First consider the case where $C_i$ is invariant under $G$. By the assumption that the $G$-action 
is pseudofree, the induced $G$-action on $C_i$ is nontrivial. 
By Hurwitz's theorem, $p\leq 82(\text{genus}(C_i)-1)$. On the other hand, by the adjunction
inequality, 
$$
\text{genus}(C_i)-1\leq \frac{1}{2}(C_i^2+c_1(K)\cdot C_i)\leq c_1(K)\cdot C_i\leq c_1(K)^2. 
$$
Hence $p\leq 82 c_1(K)^2$ as claimed. 

Now suppose $C_i$ is not invariant under $G$. Then $g\cdot C_i\neq C_i$ for all $1\neq g\in G$.
This implies that
$$
c_1(K)^2\geq c_1(K)\cdot (\sum_{g\in G} g\cdot C_i)=p\cdot c_1(K)\cdot C_i\geq p,
$$
because $c_1(K)\cdot C_i\geq  \frac{1}{2}(C_i^2+c_1(K)\cdot C_i)\geq \text{genus}(C_i)-1\geq 1$. 
The lemma follows. 

(2) First consider the case where $C_i$ is invariant under $G$. 
Again by the pseudofree assumption, the induced $G$-action on $C_i$ is nontrivial. 
Since we assume $p>5$, and by assumption $\text{genus}(C_i)=1$, we see immediately 
that $C_i$ contains no fixed points of $G$. With this understood, the dimension of the moduli
space of the corresponding $G$-invariant $J$-holomorphic curves at $C_i$, which is given by
$$
2(-\frac{1}{p}c_1(K)\cdot C_i+2(1-\text{genus}(C_i/G)))=-\frac{2}{p}c_1(K)\cdot C_i,
$$
is negative because of the assumption $c_1(K)\cdot C_i\geq 1$. Note that here since the 
$G$-action on $C_i$ is free, $\text{genus}(C_i/G)=1$. By choosing a generic $G$-equivariant $J$, 
this case can be ruled out. (Here and throughout the rest of the paper, moduli spaces of $G$-invariant
pseudo-holomorphic curves in $X$ are canonically identified with the corresponding moduli spaces of
pseudo-holomorphic curves in the orbifold $X/G$.)

Suppose $C_i$ is not invariant under $G$. Then
$$
c_1(K)^2\geq c_1(K)\cdot (\sum_{g\in G} g\cdot C_i)=p\cdot c_1(K)\cdot C_i\geq p
$$
as claimed. This finishes the proof of the lemma. 
\end{proof}

\begin{lemma}
For any $i$, if $\text{genus}(C_i)=1$ and $c_1(K)\cdot C_i=0$, then $C_i$ is an embedded 
torus of self-intersection $0$, which is disjoint from the rest of the set of $J$-holomorphic 
curves $\{C_i\}$. 
\end{lemma}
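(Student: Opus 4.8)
The plan is to argue exactly as in the proof of Lemma 2.4, combining the adjunction formula with positivity of intersections, but now using the vanishing $c_1(K)\cdot C_i=0$ in place of a positive lower bound. The key observation is that, with genus one and $c_1(K)\cdot C_i=0$, the adjunction formula already forces $C_i^2\geq 0$; comparing this with the expansion of $c_1(K)\cdot C_i$ as a sum of intersection numbers then pins everything down at once.

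First I would apply the adjunction formula to the irreducible (somewhere injective) $J$-holomorphic curve $C_i$. Writing $\delta_i\geq 0$ for the contribution of the singular points of $C_i$, so that $\delta_i=0$ exactly when $C_i$ is embedded, adjunction reads
$$C_i^2 = 2\,\text{genus}(C_i) - 2 - c_1(K)\cdot C_i + 2\delta_i,$$
and substituting $\text{genus}(C_i)=1$ and $c_1(K)\cdot C_i=0$ gives $C_i^2 = 2\delta_i\geq 0$. Next I would expand the pairing using $c_1(K)=\sum_j n_jC_j$:
$$0 = c_1(K)\cdot C_i = n_iC_i^2 + \sum_{j\neq i} n_j\,(C_j\cdot C_i).$$
Since the $C_j$ are distinct irreducible $J$-holomorphic curves, positivity of intersections gives $C_j\cdot C_i\geq 0$ for $j\neq i$; as $n_j>0$ and $C_i^2\geq 0$, every term on the right-hand side is non-negative, hence all of them vanish. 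Thus $C_i^2=0$, so $\delta_i=0$ and $C_i$ is embedded, of genus one, i.e. an embedded torus of self-intersection $0$; and $C_j\cdot C_i=0$ for every $j\neq i$, which by positivity of intersections forces $C_i\cap C_j=\emptyset$, giving the asserted disjointness.

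I do not expect any serious obstacle here. The one point requiring care is the bookkeeping in the adjunction formula: $C_i$ should be taken to be somewhere injective (its multiplicity having been absorbed into $n_i$), so that the singularity correction $\delta_i$ is genuinely non-negative, and the sign of the $c_1(K)\cdot C_i$ term must be as written. Note that, just as with the analogous step of Lemma 2.4, neither the minimality of $(X,\omega)$ nor the pseudofree hypothesis is actually used in this particular statement.
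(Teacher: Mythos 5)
Your proof is correct and follows essentially the same route as the paper's: the adjunction formula (with the singularity contribution $\delta_i\geq 0$) pins down $C_i^2=0$ and embeddedness, and positivity of intersections applied to $0=c_1(K)\cdot C_i=\sum_j n_j\,C_j\cdot C_i$ gives the disjointness. The only cosmetic difference is that the paper obtains $C_i^2\leq 0$ from the minimality inequality $c_1(K)\cdot C_i\geq C_i^2$ rather than from the expansion of $c_1(K)\cdot C_i$, so your closing remark that minimality is not actually needed is accurate for your variant of the argument.
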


\begin{proof}
Since $0=c_1(K)\cdot C_i\geq C_i^2$, we have, by the adjunction inequality, that 
$$
0\geq C_i^2+c_1(K)\cdot C_i\geq \text{genus}(C_i)-1=0,
$$
which implies that $C_i^2=c_1(K)\cdot C_i=0$, and $C_i$ is embedded. If $C_i$ is not
disjoint from the rest of the set of $J$-holomorphic curves $\{C_i\}$, we would have 
$c_1(K)\cdot C_i> C_i^2=0$, which is a contradiction. Hence the lemma. 
\end{proof}

Since we assume $p>5$, the curves $C_i$ as described in Lemma 2.5 will not contain any
fixed points of $G$. Furthermore, they will not make any contributions to the calculation
of either $c_1(K)\cdot C_i$ for any $i$, or $c_1(K)^2$. So this kind of $J$-holomorphic curves will play no
role in our argument, and henceforth for simplicity we simply assume they do not exist. 

\begin{lemma}
If there exists a $C_i$ with $\text{genus}(C_i)=0$ which is not invariant under $G$, then
$p\leq c_1(K)^2$. 
\end{lemma}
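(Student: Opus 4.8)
The plan is to exploit the same orbit-summing trick that handled the higher-genus and elliptic cases in Lemmas 2.4 and 2.5, now applied to a rational curve $C_i$ that is not $G$-invariant. Since the $G$-action is homologically trivial over $\Q$, the class $[g\cdot C_i]$ equals $[C_i]$ for every $g\in G$, and since $C_i$ is not invariant, the curves $g\cdot C_i$ for $g\in G$ are $p$ \emph{distinct} $J$-holomorphic curves all in the homology class $[C_i]$. First I would pair $c_1(K)$ with the cycle $\sum_{g\in G} g\cdot C_i$ and use positivity of intersections of $J$-holomorphic curves together with minimality of $(X,\omega)$ (which gives $c_1(K)\cdot C_i\geq 0$) to write
$$
c_1(K)^2 \;\geq\; c_1(K)\cdot\Big(\sum_{g\in G} g\cdot C_i\Big) \;=\; p\,\big(c_1(K)\cdot C_i\big).
$$
So the whole argument reduces to showing $c_1(K)\cdot C_i\geq 1$, i.e.\ ruling out the case $c_1(K)\cdot C_i=0$.

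Next I would dispose of the case $c_1(K)\cdot C_i = 0$ for a non-invariant rational $C_i$. Minimality forces $c_1(K)\cdot C_i\geq C_i^2$, so $C_i^2\leq 0$. If $C_i^2=0$, then by the adjunction inequality $0\geq C_i^2 + c_1(K)\cdot C_i \geq \mathrm{genus}(C_i)-1 = -1$, which is consistent, but then $C_i$ and its translates $g\cdot C_i$ are $p$ distinct curves in a class of square $0$; since $[g\cdot C_i]=[C_i]$ we would get $(g\cdot C_i)\cdot C_i = C_i^2 = 0$ for $g\neq 1$, so $C_i$ is disjoint from all its translates, and moreover $c_1(K)\cdot C_i = 0$ means $C_i$ is disjoint from every other curve in $\{C_j\}$ and appears in $c_1(K)=\sum n_j C_j$ — but then, exactly as in the remark following Lemma 2.5, $C_i$ contributes nothing and can be discarded; more to the point, an embedded $J$-holomorphic sphere of square $0$ that is disjoint from the canonical divisor representative cannot actually occur in $c_1(K)=\sum n_j C_j$ with positive coefficient unless $c_1(K)=0$ on that part, which is excluded by our running reductions. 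If $C_i^2<0$, then $c_1(K)\cdot C_i=0$ combined with $C_i^2<0$ and adjunction gives $0\leq \mathrm{genus}(C_i)\leq C_i^2 + c_1(K)\cdot C_i + 2 \leq 1$, hence $C_i^2=-1$ (if $\mathrm{genus}=0$) — but an embedded $J$-holomorphic $(-1)$-sphere is excluded by Lemma 2.3 and the reduction to the minimal case. (If $C_i$ is not embedded one uses the adjunction \emph{inequality} with the correction term, pushing $C_i^2$ even more negative and deriving the same contradiction via minimality.)

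The remaining subtlety is the dimension/genericity bookkeeping when $C_i$ is not embedded or when several translates coincide with other $C_j$'s; here I would argue as in the proof of Lemma 2.4(2), identifying the relevant moduli space of $G$-invariant (equivalently, orbit-type) $J$-holomorphic curves in $X/G$ and noting that for generic $G$-equivariant $J$ a negative formal dimension rules the configuration out. I expect the main obstacle to be precisely this last point: making sure that after the orbit sum one genuinely has $c_1(K)\cdot C_i\geq 1$ in \emph{every} surviving case, i.e.\ that the borderline configurations with $c_1(K)\cdot C_i=0$ are all either genuinely absent (by minimality/Lemma 2.3) or harmless (disjoint square-zero tori/spheres that were already discarded). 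Once that is secured, the displayed inequality gives $p\leq c_1(K)^2$ immediately.
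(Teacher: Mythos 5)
Your overall strategy is the paper's strategy: the orbit sum $c_1(K)^2\geq c_1(K)\cdot\bigl(\sum_{g\in G}g\cdot C_i\bigr)=p\,c_1(K)\cdot C_i$ handles $c_1(K)\cdot C_i\geq 1$, and everything reduces to killing the case $c_1(K)\cdot C_i=0$. Two remarks on where your treatment of that remaining case differs from (and is weaker than) the paper's.

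First, you work to exclude $C_i^2<0$ via adjunction and minimality, but this branch never arises: since $g\cdot C_i\neq C_i$ for $1\neq g\in G$ and the action is homologically trivial over $\Q$, positivity of intersections gives $C_i^2=(g\cdot C_i)\cdot C_i\geq 0$ directly. Combined with minimality ($c_1(K)\cdot C_i\geq C_i^2$) this pins down $C_i^2=0$ at once. Second, and more importantly, your primary argument for the square-zero case is not valid: such a sphere is not ``discarded'' the way the tori after Lemma 2.5 are (those are genus-one, index-zero curves that are merely harmless), and there is no running reduction under which a square-zero sphere component of $c_1(K)=\sum_j n_jC_j$ ``cannot occur unless $c_1(K)=0$ on that part.'' The correct mechanism is exactly the one you relegate to a closing subtlety: $C_i^2=0$ forces $(g\cdot C_i)\cdot C_i=0$, so the translates are pairwise disjoint and $C_i$ avoids the fixed-point set; the moduli space of genus-zero curves through $C_i$ then has formal dimension $d=2(-c_1(K)\cdot C_i+2(1-0)-3)=-2<0$, and since $C_i$ lives in the free part of the action, a generic $G$-equivariant $J$ rules it out. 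So the decisive ingredient is present in your write-up but misassigned as a side issue, while the argument you actually lean on for the crucial case does not stand on its own.
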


\begin{proof}
Since $C_i$ is not invariant under $G$, $g\cdot C_i\neq C_i$ for any $1\neq g\in G$.
On the other hand, $G$ acts trivially on $H_2(X;\Q)$, so that $C_i^2=(g\cdot C_i)\cdot C_i\geq 0$.

If $c_1(K)\cdot C_i\geq 1$, we have as before that 
$$
c_1(K)^2\geq c_1(K)\cdot (\sum_{g\in G} g\cdot C_i)=p\cdot c_1(K)\cdot C_i\geq p.
$$
If  $c_1(K)\cdot C_i=0$, then $C_i^2=0$ as well, which implies that $(g\cdot C_i)\cdot C_i=0$
for all $g\in G$. In particular $g\cdot C_i$ and $C_i$ are disjoint for any $1\neq g\in G$, so that
$C_i$ contains no fixed points of $G$. The dimension of the moduli space of the corresponding 
$J$-holomorphic curves at $C_i$ is given by 
$$
d=2(-c_1(K)\cdot C_i+2(1-\text{genus}(C_i))-3)=-2,
$$
so that by choosing a generic $G$-equivariant $J$, such a $C_i$ does not exist. 
\end{proof}

\begin{lemma}
Suppose there exist $i,j$, $i\neq j$,  such that (i) $\text{genus}(C_i)=\text{genus}(C_i)=0$, (ii) 
$C_i$ and $C_j$ intersect at a point
which is not fixed under $G$. Then $p\leq 2+2c_1(K)^2$.
\end{lemma}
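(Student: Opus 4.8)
The plan is to convert the hypothesis ``$C_i$ and $C_j$ meet at a point not fixed by $G$'' into the lower bound $C_i\cdot C_j\geq p$, and then to play this off against the decomposition $c_1(K)=\sum_k n_k C_k$ together with the adjunction inequality and the minimality bounds $c_1(K)\cdot C_k\geq 0$ and $c_1(K)\cdot C_k\geq C_k^2$ recorded just before Lemma 2.4.

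First I would reduce to the case in which both $C_i$ and $C_j$ are $G$-invariant: if either one fails to be invariant, then, since it has genus zero, Lemma 2.6 already gives $p\leq c_1(K)^2$, which is stronger than what we want. So assume both are invariant, and let $x\in C_i\cap C_j$ be the given point with $G\cdot x\neq\{x\}$. Since $G=\Z_p$ with $p$ prime, the stabilizer of $x$ is trivial, so $G\cdot x$ consists of $p$ distinct points; by $G$-invariance of $C_i$ and $C_j$ these $p$ points all lie in $C_i\cap C_j$, and since $C_i$ and $C_j$ are distinct irreducible $J$-holomorphic curves, positivity of intersections yields $C_i\cdot C_j\geq p$.

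Next I would feed this into the canonical class. Because $C_i$ and $C_j$ have genus zero they occur in $c_1(K)=\sum_k n_k C_k$ with multiplicity one (any component of multiplicity greater than one in Taubes' decomposition is an embedded square-zero torus), so $c_1(K)-C_i-C_j$ is again a non-negative integral combination of $J$-holomorphic curves; pairing it with $c_1(K)$ and using $c_1(K)\cdot C_k\geq 0$ for all $k$ gives $c_1(K)\cdot(C_i+C_j)\leq c_1(K)^2$. On the other hand, expanding $c_1(K)\cdot C_i=\sum_k n_k(C_k\cdot C_i)$ and discarding the non-negative terms indexed by $k\neq i,j$, one gets $c_1(K)\cdot C_i\geq C_i^2+C_i\cdot C_j\geq C_i^2+p$, and likewise $c_1(K)\cdot C_j\geq C_j^2+p$; adding these, $c_1(K)^2\geq c_1(K)\cdot(C_i+C_j)\geq C_i^2+C_j^2+2p$. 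Finally the adjunction inequality for genus-zero curves gives $C_i^2\geq -2-c_1(K)\cdot C_i$ and $C_j^2\geq -2-c_1(K)\cdot C_j$, so $C_i^2+C_j^2\geq -4-c_1(K)\cdot(C_i+C_j)\geq -4-c_1(K)^2$; combining, $c_1(K)^2\geq -4-c_1(K)^2+2p$, that is, $p\leq 2+c_1(K)^2$, which in particular yields the asserted bound $p\leq 2+2c_1(K)^2$.

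The geometric content lies entirely in the estimate $C_i\cdot C_j\geq p$, and everything after that is intersection-theoretic bookkeeping. The two places that need a moment's care are (i) the reduction to the $G$-invariant case, which is what forces the whole orbit $G\cdot x$ onto $C_i\cap C_j$, and (ii) control of the multiplicities $n_i,n_j$: the step $c_1(K)\cdot C_i\geq C_i^2+C_i\cdot C_j$ uses $n_iC_i^2\geq C_i^2$, which could fail were $C_i^2<0$ and $n_i>1$, but that combination is ruled out by the genus-zero hypothesis via the structure of Taubes' curves. I expect (ii) to be the only delicate point.
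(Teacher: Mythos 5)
Your overall strategy coincides with the paper's: reduce to the case where both curves are $G$-invariant via Lemma 2.6, extract $C_i\cdot C_j\geq p$ from the free orbit of the intersection point, and combine this with the decomposition $c_1(K)=\sum_k n_kC_k$, minimality, and adjunction. The difficulty is exactly the point you flagged as delicate and then waved away: the step $c_1(K)\cdot C_i\geq C_i^2+C_i\cdot C_j$ requires $n_iC_i^2\geq C_i^2$, which fails if $C_i^2<0$ and $n_i>1$, and your claim that this combination is excluded because ``any component of multiplicity greater than one in Taubes' decomposition is an embedded square-zero torus'' is not available here. That structural result holds for a \emph{generic} $\omega$-compatible $J$, but in this lemma $J$ must be $G$-equivariant, and the paper is explicit (see the discussion of \cite{C1} in the introduction) that for a generic \emph{equivariant} $J$ the set $\cup_iC_i$ need not have the standard Taubes structure. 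Indeed the paper's own case analysis later on (Case (a) of Section 3, and Cases (3)--(4) of the list in Case (b)) exhibits configurations of genus-zero curves with $C_i^2=-2$ and equal multiplicities $n_i=n_j$ that are not asserted to be $1$; nothing at the stage of Lemma 2.7 rules out $n_i>1$ for a sphere of negative square. So as written your argument has a genuine gap.

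The paper's proof closes this hole with a small but essential twist: order the two multiplicities so that $n_i\geq n_j$, set $\delta=c_1(K)\cdot C_j$, and estimate only against the curve of \emph{smaller} multiplicity,
\[
\delta\;\geq\; n_iC_i\cdot C_j+n_jC_j^2\;\geq\; n_j\bigl(C_i\cdot C_j+C_j^2\bigr)\;\geq\; n_j\bigl(p+C_j^2\bigr),
\]
so that the common factor $n_j$ can be divided out (using $\delta\geq\frac{1}{n_j}\delta\geq p+C_j^2$, valid since $\delta\geq 0$ by minimality), after which adjunction for $C_j$ gives $(\delta-p)+\delta+2\geq 0$ and hence $p\leq 2+2\delta\leq 2+2c_1(K)^2$. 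If you either adopt this ordering trick or supply a genuine proof that $n_i=n_j=1$ in your situation, your bookkeeping goes through (and in the multiplicity-one case it even yields the slightly better bound $p\leq 2+c_1(K)^2$); without one of these, the proof is incomplete.
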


\begin{proof}
Suppose both of $C_i$, $C_j$ are invariant under $G$; otherwise the lemma follows from
the previous lemma. Without loss of generality, we assume $n_i\geq n_j$. Set $\delta=c_1(K)\cdot C_j$.
Then
$$
\delta\geq (n_iC_i+n_jC_j)\cdot C_j\geq n_j(C_i\cdot C_j+C_j^2)\geq n_j(p+C_j^2). 
$$
Here we used $C_i\cdot C_j\geq p$, which follows from the fact that the set $C_i\cup C_j$ is
$G$-invariant and $C_i$, $C_j$ intersect at a point not fixed under $G$. This gives 
$\delta\geq\frac{1}{n_j}\delta\geq p+C_j^2$. 

Now by the adjunction inequality, we obtain
$$
(\delta-p)+\delta +2\geq C_j^2+c_1(K)\cdot C_j+2\geq 0, 
$$
which gives rise to $p\leq 2+2\delta\leq 2+2c_1(K)^2$, as we claimed.
\end{proof}

We end this section with the 

\vspace{3mm}

\noindent{\bf Proof of Lemma 1.6}

\vspace{3mm}

The local intersection number $\text{int}_D(x)$ is defined to be the limit
$$
\text{int}_D(x)=\lim_{n\rightarrow\infty}\int_D \frac{i}{2\pi} F_{a_n}
$$
for a sequence of solutions $((\alpha_n,\beta_n),a_n)$ to the $r_n$-version of 
the Taubes' perturbed Seiberg-Witten equations, where $r_n\rightarrow\infty$
as $n\rightarrow\infty$, cf. Proposition 5.6 in \cite{T}. In Lemma 5.8 of \cite{T},
Taubes gave a lower bound for $\text{int}_D(x)$ which takes the form
$$
\int_D \frac{i}{2\pi} F_{a_n}\geq m_0+z_3(4^{-n}+\rho^2),
$$
where $\rho>0$ can be taken arbitrarily small, and $m_0$ is a positive integer
(see (5.19) and (5.20) in \cite{T}).  Here $z_3$ is an independent constant. Clearly,
$$
\text{int}_D(x)\geq m_0.
$$

To explain $m_0$, recall that by our assumption, $x\in\alpha_n^{-1}(0)$ for all $n$.
Fix a Gaussian coordinate system at $x$ and pull back the solutions $((\alpha_n,\beta_n),a_n)$
to the Gaussian system. After rescaling by a factor $\sqrt{r_n}$, the solutions 
converge in $C^\infty$-topology over compact subsets to a solution $((\alpha_0,0),a_0)$ to the
$r=1$ version of the Taubes' perturbed Seiberg-Witten equations on $\C^2$. Moreover, the 
$U(1)$-connection $a_0$ defines a holomorphic structure on the trivial complex line bundle over
$\C^2$ of which $\alpha_0$ is a holomorphic section. Finally, $\alpha_0^{-1}(0)$ is the 
zero set of a polynomial on $\C^2$. With the preceding understood, the number $m_0$ is the
local contribution at $0\in\C^2$ to the intersection number of a generic complex line in $\C^2$ 
with $\alpha_0^{-1}(0)$. 

Note that there is an induced complex linear action of $g$ on $\C^2$ as well as a corresponding 
action on the associated Seiberg-Witten equations on $\C^2$.  If we write $(z_1,z_2)$ for the complex coordinates on $\C^2$, then 
$$
g\cdot (z_1,z_2)=(\lambda^{m_1}z_1,\lambda^{m_2}z_2)
$$ 
where $\lambda=\exp(2\pi i/m)$ with $m\equiv \text{order}(g)$ and $m_1, m_2$ are the weights of
$g$ at $x$. 

Now write $\alpha_0=f(z_1,z_2)\cdot s$, where $s$ is a non-zero holomorphic section, and
$$
f(z_1,z_2)=\sum_{i=1}^N c_i z_1^{a_{1,i}}z_2^{a_{2,i}}+ \cdots \mbox{(higher order terms)}.
$$
Here $a\equiv a_{1,i}+a_{2,i}>0$ which is independent of $i=1,2,\cdots, N$. Then the above 
interpretation of $m_0$ shows that $m_0\geq a$. On the other hand, the representation of 
$g$ on the fiber of the $G$-equivariant canonical bundle is given by multiplication by
$\lambda^{-(m_1+m_2)}$, where $\lambda=\exp (2\pi i/m)$, and $m\equiv \text{order}(g)$.
Apparently $g\cdot s=\lambda^{-(m_1+m_2)}s$ and $g\cdot \alpha_0=\alpha_0$, which implies
that 
$$
f(g\cdot (z_1,z_2))=\lambda^{-(m_1+m_2)}\cdot f(z_1,z_2). 
$$
The above equation gives the congruence relation 
$$
a_{1,i}m_1+a_{2,i}m_2=-(m_1+m_2) \pmod{m}, \;\;\forall i=1,2,\cdots,N.
$$
The lemma follows easily by taking $(a_1,a_2)$ to be any of the $(a_{1,i},a_{2,i})$'s. 

\hfill $\Box$

\section{Proof of the Main Theorem}

Before we start, it is useful to make observation of the following fact.

\begin{lemma}
$\sum_i n_i\leq C_\omega$, where $c_1(K)=\sum_i n_iC_i$. In particular, $n_i\leq C_\omega$ for 
each $i$. 
\end{lemma}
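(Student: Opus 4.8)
The plan is to pair both sides of the identity $c_1(K)=\sum_i n_i C_i$ with the symplectic class $[\omega]$, and then feed the result into the definition of $C_\omega$. Since each $C_i$ is a $J$-holomorphic curve, it is in particular a (possibly singular) pseudo-holomorphic curve, hence has positive symplectic area: $[\omega]\cdot C_i>0$. Scaling up, $[N_\omega\omega]$ is an integral class and is represented by a symplectic form pairing positively with each $C_i$, so $[N_\omega\omega]\cdot C_i$ is a \emph{positive integer}, i.e. $[N_\omega\omega]\cdot C_i\geq 1$ for every $i$.

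From here the estimate is immediate: pairing $c_1(K)=\sum_i n_i C_i$ with $[N_\omega\omega]$ gives
$$
C_\omega = N_\omega\, c_1(K)\cdot[\omega] = [N_\omega\omega]\cdot c_1(K) = \sum_i n_i\,\big([N_\omega\omega]\cdot C_i\big) \geq \sum_i n_i,
$$
using $n_i>0$ and $[N_\omega\omega]\cdot C_i\geq 1$ in the last inequality. Since all $n_i$ are positive, each individual $n_i$ is bounded above by the sum, so $n_i\leq C_\omega$ for each $i$, which is the second assertion.

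The only point requiring a word of care — and the closest thing to an obstacle — is the claim that $[N_\omega\omega]\cdot C_i$ is a strictly positive integer rather than merely a nonnegative one. Integrality is clear by definition of $N_\omega$ together with the fact that $C_i$ carries a genuine integral homology class (it is the limit of zero sets $\alpha^{-1}(0)$ of sections, as recalled from Taubes \cite{T}). Positivity follows because a nonconstant $J$-holomorphic curve has strictly positive $\omega$-area; a curve of zero $\omega$-area would have to be a constant map, which cannot contribute a nonzero class to the sum $\sum_i n_i C_i = c_1(K)\neq 0$. Hence $[N_\omega\omega]\cdot C_i\geq 1$, and the argument goes through. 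No deeper input is needed; this lemma is purely a bookkeeping consequence of the Taubes decomposition together with the definition of $C_\omega$.
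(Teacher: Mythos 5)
Your proof is correct and is essentially the paper's own argument: the paper likewise pairs $c_1(K)=\sum_i n_iC_i$ with $[\omega]$ and uses $\omega(C_i)\geq 1/N_\omega$ (i.e.\ that $N_\omega\omega(C_i)$ is a positive integer) to conclude $\sum_i n_i\leq C_\omega$. Your write-up just makes explicit the integrality and positivity points that the paper leaves implicit.
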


\begin{proof}
$c_1(K)\cdot [\omega]=\sum_i n_i\omega(C_i)\geq \sum_i n_i\cdot\frac{1}{N_\omega}$, from
which the lemma follows. 
\end{proof}

The following lemma eliminates the case of non-pseudofree actions.

\begin{lemma}
If there exists a $C_i$ which is fixed under $G$. Then $p\leq 1+C_\omega$. 
\end{lemma}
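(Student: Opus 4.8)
The plan is to locate a generic point $x$ on the pointwise-fixed curve $C_i$, apply Lemma 1.6 there to extract a large lower bound for a local intersection number $\text{int}_D(x)$, and then evaluate $\text{int}_D(x)$ explicitly and see that it is exactly the multiplicity $n_i$; Lemma 3.1 then forces $p\leq 1+C_\omega$.

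First I would fix the local model. Since $C_i$ is pointwise fixed by $G=\Z_p$ and the $\Z_p$-action is effective (a nontrivial action of a group of prime order is effective), the generator $g$ acts on the normal line bundle of $C_i$ by a nontrivial rotation: at every $x\in C_i$ the complex tangent space $T_xX$ carries weights $(0,a)$ with $0<a<p$; indeed $a=0$ would mean $g$ fixes an open set and hence $g=\text{id}$, as $X$ is connected. Now choose $x$ to be a smooth point of $C_i$ lying on none of the other $C_j$ — this is possible because the $C_j$ form a finite collection of $J$-holomorphic curves, so $C_i$ has only finitely many singular points and only finitely many points in common with the remaining curves. Because $J$ may be taken integrable in a neighborhood of the fixed point set, there is an $\omega$-compatible coordinate system $(z_1,z_2)$ centered at $x$ in which $g\cdot(z_1,z_2)=(z_1,\lambda^{a}z_2)$ with $\lambda=\exp(2\pi i/p)$, and $C_i=\{z_2=0\}$ locally.

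Next I would apply Lemma 1.6 with $m=p$, $m_1=0$, $m_2=a$. Its hypothesis that $x\in\alpha^{-1}(0)$ for all $r>0$ is automatic here by Remark 1.7, since $m_1+m_2=a\not\equiv 0\pmod p$. Lemma 1.6 then produces non-negative integers $a_1,a_2$ with $a_1+a_2>0$ and $(a_1+1)m_1+(a_2+1)m_2\equiv 0\pmod p$, that is $(a_2+1)a\equiv 0\pmod p$; as $p$ is prime and $\gcd(a,p)=1$, this forces $a_2\equiv-1\pmod p$, hence $a_2\geq p-1$ and $\text{int}_D(x)\geq a_1+a_2\geq p-1$ for every embedded $J$-holomorphic disk $D$ at $x$. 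To finish, I would compute $\text{int}_D(x)$ for a well-chosen $D$: since $x$ is a smooth point of $C_i$ on no other $C_j$, the only branch of $\cup_j C_j$ through $x$ is $C_i=\{z_2=0\}$, of multiplicity $n_i$, and the recipe recalled in Section 1 (parametrization $z_1=z$, $z_2=0$, so $l=1$) gives $\text{int}_D(x)=n_i$ provided $D$ is transverse to $C_i$ (hence not tangent to $\{z_2=0\}$) and small enough to meet $\cup_j C_j$ only at $x$. Combining the two displays yields $n_i\geq p-1$, and since $n_i\leq\sum_j n_j\leq C_\omega$ by Lemma 3.1, we conclude $p\leq 1+C_\omega$.

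I do not expect a genuine obstacle: the content is concentrated in choosing the right point at which to invoke Lemma 1.6 and reading off the congruence. The one point requiring care is that $\text{int}_D(x)$ must be shown to equal $n_i$ exactly, not merely to be bounded below by it, which is why $x$ is chosen away from all singular and multiple points of $\cup_j C_j$ and $D$ is taken transverse to $C_i$.
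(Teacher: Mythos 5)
Your proposal is correct and follows essentially the same route as the paper: pick a point of $C_i$ away from the other curves, apply Lemma 1.6 with normal weight data $(0,a)$ to force $\operatorname{int}_D(x)\geq p-1$, identify $\operatorname{int}_D(x)=n_i$ for a transverse disk, and conclude via Lemma 3.1. The only cosmetic difference is that the paper normalizes the generator so that the normal weight is $1$, whereas you keep a general weight $a$ and use $\gcd(a,p)=1$; your explicit appeal to Remark 1.7 to verify the hypothesis of Lemma 1.6 is a welcome extra detail.
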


\begin{proof}
Let $n_i$ be the multiplicity of $C_i$. We pick a point $x\in C_i$ such that $x$ does not
lie in any other $C_j\neq C_i$. Let $D$ be a $J$-holomorphic disk intersecting $C_i$ transversely
and $D\cap (\cup_j C_j)=\{x\}$. Then the local intersection number 
$$
\text{int}_D(x)=n_i,
$$
cf. \cite{T}, Section 5. Let $g\in G$ be the element whose action near $x$ is given $g\cdot (z_1,z_2)=
(z_1,\mu_p z_2)$, where $\mu_p\equiv \exp(2\pi i/p)$. Then by Lemma 1.6, there exist non-negative
integers $a_1,a_2$ satisfying $(a_1+1)\cdot 0+(a_2+1)\cdot 1\equiv 0\pmod{p}$ (here $m_1=0$, $m_2=1$),
such that $\text{int}_D(x)\geq a_1+a_2$. It follows that $n_i=\text{int}_D(x)\geq a_2\geq p-1$.  
This gives
$$
p\leq 1+n_i\leq 1+C_\omega. 
$$
\end{proof}

We shall assume, in what follows, that the $\Z_p$-action is pseudofree. 
With the preliminary lemmas proved in the previous section (i.e., Lemmas 2.4-2.7), 
we may assume without loss of
generality that the curves in the set $\{C_i\}$ satisfy:

\begin{itemize}
\item [{(1)}] each $C_i$ is a $2$-sphere, which may be singular or immersed;
\item [{(2)}] each $C_i$ is $G$-invariant, containing $\leq 2$ fixed points;
\item [{(3)}] two distinct $C_i$, $C_j$ intersect only at fixed points of $G$;
\item [{(4)}] each $C_i$ is embedded away from the fixed points of $G$ on $C_i$. 
\end{itemize}
Here for the last condition, (4), if there is a $C_i$ which is not embedded away from the fixed points
of $G$, we obtain a bound for $p$ by the adjunction inequality: let $y_k$, $k=1,2, \cdots, p$, be a subset
of singular points of $C_i$ which is invariant under $G$ and denote by $\delta_{y_k}$ the contribution
of $y_k$ to the adjunction inequality, then $\delta_{y_k}\geq 2$ and 
$$
c_1(K)^2+1\geq c_1(K)\cdot C_i+1\geq \frac{1}{2}(C_i^2+c_1(K)\cdot C_i)+1\geq 
\frac{1}{2}\sum_{k=1}^p \delta_{y_k}\geq p.
$$

\vspace{3mm}

\noindent{\bf Case (a)}: $c_1(K)^2=0$. First of all, notice that the $\Z_p$-action must have a fixed 
point. This is because if the original symplectic $4$-manifold is not minimal, then after blowing down 
the induced action must have a fixed point, which is the image of the exceptional divisor under the
blowing down. If the original manifold is minimal, then by the assumption we made in the Main Theorem,
the Euler characteristic of the manifold must be non-zero. In any event, the $\Z_p$-action has a fixed point.

With this understood, according to Corollary B of \cite{CK}, the $\Z_p$-action must be trivial unless
$p=1\pmod{4}$ or $p=1\pmod{6}$. Moreover, from the proof of Corollary B, the following are also 
true: (i) when $p=1\pmod{4}$, there must be $C_i,C_j$, both embedded, with $n_i=n_j$, such that $C_i^2=C_j^2=-2$ 
and $C_i$, $C_j$ intersect at a fixed point $x$ with tangency of order $2$; (ii) when $p=1\pmod{6}$, 
then either there is a $C_i$ which is a $2$-sphere with a cusp singularity $x$ fixed by $G$, or 
there are $3$ distinct embedded $(-2)$-spheres $C_i, C_j, C_k$ intersecting transversely at a fixed point 
$x$ of $G$. Furthermore, there are no fixed points where
the representation of $G$ on the complex tangent space has determinant $1$. 
By Remark 1.7, Lemma 1.6 applies here to all the fixed points of $G$. We fix
a $J$-holomorphic disk $D$ whose tangent plane at $x$ is different from that
of any of the $J$-holomorphic curves in $\{C_i\}$. In this case, 
$\text{int}_D(x)$ can be easily determined using Theorem 7.1 of Micallef and
White \cite{MW}. 

In case (i), Lemma 1.6 gives us
$$
C_\omega \geq n_i+n_j=\text{int}_D(x)\geq a_1+a_2,
$$
where $a_1,a_2$ satisfy $(a_1+1)\cdot 1+(a_2+1)\cdot 2\equiv 0\pmod{p}$.  This implies that
$$
p\leq 2(a_1+a_2)+3\leq 2C_\omega+3. 
$$
As for case (ii), in the former case of a cusp sphere, Lemma 1.6 gives us
$$
2C_\omega \geq 2n_i=\text{int}_D(x)\geq a_1+a_2,
$$
where $a_1,a_2$ satisfy $(a_1+1)\cdot 2+(a_2+1)\cdot 3\equiv 0\pmod{p}$.  This implies that
$$
p\leq 3(a_1+a_2)+5\leq 6C_\omega+5. 
$$
In the latter case of (ii), Lemma 1.6 gives us
$$
C_\omega \geq n_i+n_j+n_k=\text{int}_D(x)\geq a_1+a_2,
$$
where $a_1,a_2$ satisfy $(a_1+1)\cdot 1+(a_2+1)\cdot 1\equiv 0\pmod{p}$.  This implies that
$$
p\leq (a_1+a_2)+2\leq C_\omega+2. 
$$
The proof of the Main Theorem for the case where $c_1(K)^2=0$ follows. 

\vspace{3mm}

\noindent{\bf Case (b)}: $c_1(K)^2>0$.  We start with the following lemma.

\begin{lemma}
By choosing a generic $G$-equivariant almost complex structure $J$, 
the set $\cup_i C_i$ contains no fixed points 
of $G$ where the representation of $G$ on the complex tangent space has determinant $1$. 
\end{lemma}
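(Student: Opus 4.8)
The plan is to argue by contradiction using a dimension count for the moduli space of $G$-invariant $J$-holomorphic curves, exactly as was done in the proofs of Lemmas 2.4(2), 2.6 and 2.7. Suppose there is a fixed point $x\in\cup_i C_i$ at which the isotropy representation on the complex tangent space has determinant $1$, i.e.\ $m_1+m_2\equiv 0\pmod p$ with $0\le m_1,m_2<p$; since $p$ is prime and the action is effective near $x$, this forces $m_1\neq 0$ and $m_2=p-m_1$. The point $x$ lies on some collection of the $C_i$'s (by the reductions (1)--(4), each such $C_i$ is a $G$-invariant $2$-sphere). First I would show that such a configuration at $x$ is ``unnecessary'' for representing $c_1(K)$ in the sense that it forces the relevant component of the moduli space to have negative expected dimension, so that a generic choice of equivariant $J$ eliminates it.

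The key steps, in order: (i) Enumerate the local branches of $\cup_i C_i$ passing through $x$. Because the determinant of the isotropy representation is $1$, the fiber of the equivariant canonical bundle $K_\omega$ at $x$ carries the trivial representation, so (unlike the determinant-$\neq 1$ case of Remark 1.7) $x$ need not lie in $\alpha^{-1}(0)$; the content of the lemma is that generically it does not. (ii) Consider the curve(s) $C_i$ through $x$ as $G$-invariant $J$-holomorphic curves, pass to the quotient orbifold $X/G$, and write down the expected dimension of the moduli space there, which for a genus-$0$ curve $C$ with an orbifold point of order $p$ takes the form $2\big(-\tfrac1p c_1(K)\cdot C + 2 - 3 + (\text{orbifold corrections at the two fixed points})\big)$, mirroring the formulas used in Lemmas 2.4--2.7. (iii) Use the facts recorded just before the lemma, namely $c_1(K)\cdot C_i\ge 0$, $c_1(K)\cdot C_i\ge C_i^2$, together with the minimality of $(X,\omega)$ and the adjunction inequality, to bound the numerator and conclude the expected dimension is strictly negative. (iv) Invoke transversality for a generic $G$-equivariant $J$ (noting, as remarked in Section~2, that $J$ may still be taken integrable near the fixed-point set while being generic away from it) to conclude no such configuration can occur, contradiction.

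The main obstacle I expect is step (ii)--(iii): correctly computing the orbifold contribution to the index at the fixed point $x$ when the isotropy weights satisfy $m_1+m_2\equiv 0\pmod p$, and checking that the arithmetic of these weight contributions—combined with the a priori bounds $0\le c_1(K)\cdot C_i$ and $c_1(K)\cdot C_i\ge C_i^2$ from minimality—is genuinely enough to push the dimension below zero in every sub-case (one branch through $x$ versus several, embedded versus immersed, $x$ being the only fixed point on $C_i$ versus one of two). One must also be careful that the deformation theory being used is that of the configuration $\cup_i C_i$ as a whole (or of the individual components with incidence conditions at $x$), so that genericity of $J$ actually applies; this is the same subtlety flagged in the discussion following Lemma 2.3 about moduli of $G$-invariant curves being identified with moduli in $X/G$. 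Once the negative-dimension bound is in hand, the conclusion is immediate from the standard transversality argument already used repeatedly in Section~2.
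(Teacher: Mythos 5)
Your proposal follows essentially the same route as the paper: Chen fixes a single $G$-invariant sphere $C_0$ through $x$ and applies the orbifold index formula $d=2(-\tfrac1p c_1(K)\cdot C_0+2-\sum_{i=1}^2\tfrac{m_{i,1}+m_{i,2}}{p}-1)$ at its two fixed points, where the determinant-one condition forces the correction at $x$ to be exactly $-2$, giving $d=-2(\tfrac1p c_1(K)\cdot C_0+\tfrac{m_{2,1}+m_{2,2}}{p})<0$, so a generic equivariant $J$ rules the configuration out. The computation you flag as the main obstacle is indeed the whole content, and it closes just as you predict, needing only $c_1(K)\cdot C_0\ge 0$ (neither adjunction nor minimality, and no analysis of multiple branches, since one curve through $x$ suffices).
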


\begin{proof}
Suppose $x$ is such a fixed point, and $x\in C_0\in\{C_i\}$. 
Let $f:\s^2\rightarrow X$ be a $J$-holomorphic 
map parametrizing $C_0$, and let $t_1, t_2\in\s^2$ be the two points mapped to fixed points under
$f$ such that $f(t_1)=x$. Note that $t_1,t_2$ are fixed under the induced action of $G$ on $\s^2$.
Let $g_1,g_2\in G$ be the elements which act by a rotation of angle $2\pi/p$
near $t_1,t_2$ respectively. Moreover, suppose the actions of $g_1,g_2$ near the fixed points in $X$ 
are given respectively by 
$$
g_i \cdot (z_1,z_2)=(\mu_p^{m_{i,1}}z_1,\mu_p^{m_{i,2}} z_2), \; i=1,2, 
$$
where $\mu_p=\exp(2\pi i/p), 0<m_{i,1}, m_{i,2}<p$. 
Then the dimension of the moduli space of the corresponding $G$-invariant $J$-holomorphic curves 
at $C_0$ is 
\begin{eqnarray*}
d & = & 2(-\frac{1}{p} c_1(K)\cdot C_0+2-\sum_{i=1}^2\frac{m_{i,1}+m_{i,2}}{p}-1)\\
& = & -2(\frac{1}{p}c_1(K)\cdot C_0+\frac{m_{2,1}+m_{2,2}}{p}),\\
\end{eqnarray*}
see \cite{C1}, p. 19.  Here we used the fact that the representation of $G$ 
on the complex tangent space of $x$ has determinant $1$, so that $m_{1,1}+m_{1,2}=p$.
By choosing a generic $G$-equivariant $J$ (cf. \cite{C1}, Lemma 1.10), 
$d\geq 0$ if $C_0$ exists. But this is impossible because $c_1(K)\cdot C_0\geq 0$. 
\end{proof}

With the preceding lemma, Lemma 1.6 applies to any fixed point contained in $\cup_i C_i$
(cf. Remark 1.7). 

\begin{lemma}
If there exists a $C_i$ which is not embedded, then 
$$
p\leq 4C_\omega^2(3+2c_1(K)^2)^2.
$$ 
\end{lemma}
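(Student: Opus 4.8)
The plan is to exhibit, at a $G$-fixed point where $C_i$ is singular, a branch of $C_i$ of multiplicity at least $2$, and then to pit the congruence of Lemma 1.6 against an adjunction bound on the exponents of that branch together with a crude bound on the local intersection number $\text{int}_D(x)$. Throughout I use minimality in the form $c_1(K)\cdot C_j\geq C_j^2\geq 0$ and $c_1(K)\cdot C_j\leq c_1(K)^2$ (the latter since every $n_j\geq 1$), so that the total $\delta$-invariant $\delta(C_j)=1+\tfrac12(C_j^2+c_1(K)\cdot C_j)$ of the rational curve $C_j$ satisfies $\delta(C_j)\leq 1+c_1(K)^2$. By conditions (2) and (4) of the reduction made before Case (a), $C_i$ is embedded away from its (at most two) $G$-fixed points, hence singular at some $G$-fixed point $x$, where a generator $g$ of $G$ acts by $g\cdot(z_1,z_2)=(\mu_p^{m_1}z_1,\mu_p^{m_2}z_2)$ with $0<m_1,m_2<p$ by pseudofreeness. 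Since $p$ is prime, $G$ either fixes every branch of $C_i$ at $x$ or carries one into a $p$-element orbit; in the latter case $C_i$ has at least $p$ branches at $x$, so $\binom{p}{2}\leq\delta(C_i)\leq 1+c_1(K)^2$ and $p$ is already bounded. So I may assume every branch of $C_i$ at $x$ is $G$-invariant; there are then at most two of them, each parametrized near a fixed point of the induced $\Z_p$-rotation on the normalization $\s^2$ of $C_i$, at which $g$ acts with a weight $\kappa$ prime to $p$ (the $G$-action on $C_i$ is nontrivial since $\mathrm{Fix}(G)$ is finite).

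Next I show some branch $B$ of $C_i$ at $x$ has multiplicity $l\geq 2$. Otherwise $C_i$ has exactly two smooth branches at $x$; choosing $\omega$-compatible coordinates diagonalizing $g$ so that one branch is tangent to $\{z_2=0\}$, and writing the branches as $u\mapsto(u,\ldots)$ and either $v\mapsto(v,\ldots)$ or $v\mapsto(\ldots,v)$ according to whether their tangent lines coincide or not, $G$-equivariance (using that the two fixed points of the rotation on $\s^2$ carry opposite weights $\pm\kappa$) forces $m_1\equiv\kappa$ together with $m_1\equiv-\kappa\pmod p$ in the first case --- impossible since $p$ is odd and $\kappa\not\equiv 0$ --- and $m_1+m_2\equiv 0\pmod p$ in the second, which is excluded on $\cup_iC_i$ by Lemma 3.3. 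Hence $B$ exists; its tangent line, being $g$-invariant, may be taken to be $\{z_2=0\}$ in the coordinates above (relabelling if necessary), so a holomorphic parametrization of $B$ has the form $u\mapsto(u^l+\cdots,\,cu^{l'}+\cdots)$ with $c\neq 0$ and $l'>l\geq 2$, and $G$-invariance of $B$ gives $m_1\equiv\kappa l$ and $m_2\equiv\kappa l'\pmod p$. Substituting these into the relation $(a_1+1)m_1+(a_2+1)m_2\equiv 0\pmod p$ of Lemma 1.6 (applicable at $x$ by Lemma 3.3 and Remark 1.7) and cancelling $\kappa$ yields non-negative integers $a_1,a_2$, not both $0$, with $(a_1+1)l+(a_2+1)l'\equiv 0\pmod p$; since the left side is a positive integer, $p\leq(a_1+1)l+(a_2+1)l'\leq(a_1+a_2+2)\,l'$.

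It remains to bound $l'$ and $a_1+a_2$. An adjunction (genus) estimate for the rational curve $C_i$, together with the fact that the multiplicity-$l$ branch $B$ contributes to $\delta(C_i)$ an amount growing with $l'$, yields $l'\leq 3+2c_1(K)^2$; the same estimate applied to each $C_j$ shows that every branch of every $C_j$ at $x$ has multiplicity at most $3+2c_1(K)^2$, and (exactly as above) each $C_j$ has at most two branches at $x$ unless $p$ is already bounded. Thus, by the Micallef--White recipe recalled after Remark 1.7, a generic embedded $J$-holomorphic disk $D$ satisfies $\text{int}_D(x)=\sum_j n_j\,m_x(C_j)\leq 2(3+2c_1(K)^2)\sum_j n_j\leq 2C_\omega(3+2c_1(K)^2)$, where $m_x(C_j)$ denotes the multiplicity of $C_j$ at $x$ and the last inequality is Lemma 3.1. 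Since Lemma 1.6 gives $a_1+a_2\leq\text{int}_D(x)$, we conclude $p\leq(2C_\omega(3+2c_1(K)^2)+2)(3+2c_1(K)^2)\leq 4C_\omega^2(3+2c_1(K)^2)^2$.

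The step I expect to be most delicate is the dichotomy forcing the branch $B$ of multiplicity $\geq 2$ to exist: eliminating the two configurations of two smooth branches requires combining the oddness of $p$ with the determinant-$\neq 1$ constraint of Lemma 3.3, and one must verify carefully that a $G$-invariant branch pulls back to a genuine fixed point of the induced rotation on the normalization, so that the weight $\kappa$ is well-defined and prime to $p$ --- which is precisely what licenses the cancellation of $\kappa$ in the congruence, the crux of the whole argument.
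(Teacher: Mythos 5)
Your proof is correct, and it uses exactly the same ingredients as the paper's: the reduction to a singularity of $C_i$ at a $G$-fixed point $x$, the elimination of the ``two smooth branches'' configuration (transverse tangents excluded by Lemma 3.3 since the weights there are $(\kappa,-\kappa)$, coincident tangents excluded because they force $p=2$), the equivariance relations $m_1\equiv\kappa l$, $m_2\equiv\kappa l'\pmod p$ feeding into the congruence of Lemma 1.6, the Micallef--White computation of $\text{int}_D(x)$, the bound $\sum_i n_i\leq C_\omega$ of Lemma 3.1, and the adjunction estimate $(l-1)(l'-1)\leq 2+2c_1(K)^2$. Where you genuinely depart from the paper is in the organization of the main estimate. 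The paper introduces a threshold and splits into two cases according to whether every branch multiplicity $l_j$ at $x$ is at most $(2C_\omega)^{-1}\sqrt p$: in the first case $\text{int}_D(x)\leq\sqrt p$ forces $l_0'\geq\sqrt p-2$ and adjunction bounds $p$; in the second case adjunction applied to the large branch bounds $p$ directly. You instead observe that adjunction bounds \emph{every} branch's $(l,l')$ by $3+2c_1(K)^2$ a priori, so that $\text{int}_D(x)\leq 2C_\omega(3+2c_1(K)^2)$ unconditionally, and then feed this straight into the congruence $p\leq(a_1+a_2+2)l'$. This removes the $\sqrt p$-dichotomy entirely, is arguably cleaner, and lands on the same final bound (your chain $(2C_\omega(3+2c_1(K)^2)+2)(3+2c_1(K)^2)\leq 4C_\omega^2(3+2c_1(K)^2)^2$ checks out since $C_\omega\geq 1$). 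The point you flag as delicate --- that a $G$-invariant branch lifts to a fixed point of the induced rotation on the normalization, so that the weight $\kappa$ is prime to $p$ and can be cancelled in the congruence --- is indeed the crux, and it is handled in the paper in exactly the same way (there written as $l_j=k_jm$, $l_j'=k_jm'\pmod p$). Your preliminary dichotomy disposing of a non-invariant branch via $\binom{p}{2}\leq\delta_x(C_i)$ is a small addition the paper leaves implicit, and it is sound.
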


\begin{proof}
We first note that all $C_i$ are embedded away from the fixed points of $G$. Fix any curve $C_0$ in
the set $\{C_i\}$. Let $x\in C_0$ be a fixed point of $G$. We parametrize $C_0$ by a $J$-holomorphic
map $f_0:\s^2\rightarrow X$, and suppose $0\in\s^2$ is mapped to $x$ under $f_0$. In a local complex
coordinate system $(z_1,z_2)$ centered at $x$, suppose $f_0$ is represented by a holomorphic map with
$z$ as a local coordinate centered at $0\in \s^2$:
$$
z_1=z^{l_0}, z_2=c_0z^{l_0^\prime}+\cdots \mbox{(higher order terms)}, 
$$
where $l_0<l_0^\prime$ unless $c_0=0$ and $l_0=1$. 

We first show that if $l_0\geq 2$, then 
$$
p\leq \text{max} (16C_\omega^2, (5+2c_1(K)^2)^2, 4C_\omega^2(3+2c_1(K)^2)^2)=
4C_\omega^2(3+2c_1(K)^2)^2.
$$
Let $f_j: z\mapsto (z^{l_j}, c_j z^{l_j^\prime} +\cdots)$,
$j=1,2,\cdots, N$, be the holomorphic maps which parametrize all the branches of $\cup_i C_i$ 
near $x$ other than the one parametrized by $f_0$ in a neighborhood of $0\in\s^2$. Here for each $j$,
$l_j<l_j^\prime$ unless $c_j=0$ and $l_j=1$. If we fix a generator $g\in G$ and suppose the 
action of $g$ near $x$ is given by
$g\cdot (z_1,z_2)=(\mu_p^mz_1,\mu_p^{m^\prime}z_2)$, where $\mu_p=\exp(2\pi i/p)$ and 
$0<m,m^\prime<p$, then it follows easily that $l_j=k_jm, l_j^\prime=k_jm^\prime \pmod{p}$ for 
some $k_j$ for all $0\leq j\leq N$. 

We assume $p\geq 16C_\omega^2$. There are two possibilities: 

Case (i): $l_j\leq (2C_\omega)^{-1}\cdot \sqrt{p}$ for all $j=0,1,2,\cdots,N$. Denote by $n_j^\prime$ the
multiplicity of the branch parametrized by the map $f_j$, $j=0,1,2,\cdots, N$. Then by Lemma 3.1
we have $\sum_{j=0}^N n_j^\prime\leq 2C_\omega$. (Note that at most $2$ branches lie in the same 
$C_i$.) We obtain 
$$
\sqrt{p}=2C_\omega\cdot ((2C_\omega)^{-1}\cdot \sqrt{p})\geq (\sum_{j=0}^N n_j^\prime)\cdot
((2C_\omega)^{-1}\cdot \sqrt{p})\geq \sum_{j=0}^N n_j^\prime l_j. 
$$
Now if we pick a $J$-holomorphic disk $D$ whose tangent plane at $x$ is different from
that of any of the branches 
parametrized by $f_j$, $0\leq j\leq N$, then by Theorem 7.1 in Micallef and White \cite{MW}, 
$\text{int}_D(x)=\sum_{j=0}^N n_j^\prime l_j$. By Lemma 1.6, we obtain
$$
\sqrt{p}\geq \text{int}_D(x)\geq a_1+a_2,
$$
where $a_1,a_2$ satisfy $(a_1+1)l_j+(a_2+1)l_j^\prime\equiv 0\pmod{p}$, $0\leq j\leq N$, because of
the congruence relations $l_j=k_jm, l_j^\prime=k_jm^\prime \pmod{p}$ for some $k_j$ for 
all $0\leq j\leq N$. Particularly, we have 
$$
(\sqrt{p}+2) l_0^\prime\geq (a_1+a_2 +2)l_0^\prime \geq (a_1+1)l_0+(a_2+1)l_0^\prime\geq p,
$$
which implies $l_0^\prime\geq (\sqrt{p}+2)^{-1}p\geq \sqrt{p}-2$. On the other hand, by 
Theorem 7.3 in  Micallef and White \cite{MW}, the point $x\in C_0$ makes a local contribution of
$\delta_x\geq (l_0-1)(l_0^\prime-1)$ to the adjunction inequality for $C_0$, which gives
$$
2c_1(K)^2\geq C_0^2+c_1(K)\cdot C_0\geq -2+ (l_0-1)(l_0^\prime-1)\geq 
-2+(\sqrt{p}-3). 
$$
Note that here we used $l_0\geq 2$.  This implies that $p\leq (5+2c_1(K)^2)^2$. 

Case (ii): there exists a $j=0,1,2,\cdots,N$ such that $l_j\geq (2C_\omega)^{-1}\cdot \sqrt{p}$. 
Then $l_j\geq 2$ since $p\geq 16C_\omega^2$, and $l_j^\prime>l_j\geq (2C_\omega)^{-1}\cdot \sqrt{p}$ 
for that $j$. Let $C_i$ be the $J$-holomorphic curve which contains the branch parametrized by $f_j$ 
near $x$. Then $x\in C_i$ makes a local contribution of $\delta_x\geq (l_j-1)(l_j^\prime-1)$ to the 
adjunction inequality for $C_i$, which gives
$$
2c_1(K)^2\geq C_i^2+c_1(K)\cdot C_i\geq -2+ (l_j-1)(l_j^\prime-1)\geq 
-2+((2C_\omega)^{-1}\sqrt{p}-1). 
$$
This implies that $p\leq 4C_\omega^2(3+2c_1(K)^2)^2$. Hence if $l_0\geq 2$, one has 
$$
p\leq \text{max} (16C_\omega^2, (5+2c_1(K)^2)^2, 4C_\omega^2(3+2c_1(K)^2)^2)
=4C_\omega^2(3+2c_1(K)^2)^2.
$$

To finish the proof of the lemma, it remains to rule out the possibility that there is a $w\in \s^2$,
$w\neq 0\in\s^2$, such that $f_0(w)=f_0(0)=x$. Note that by the arguments in the previous paragraphs,
we may assume that $f_0$ is embedded near both $0$ and $w$. Consider first
the case where the tangent planes $(f_0)_\ast (T_0\s^2)$ and $(f_0)_\ast (T_w\s^2)$ intersect transversely 
at $x$. Suppose $g\in G$ is the element which acts near $0\in\s^2$ as rotation by an angle 
of $2\pi/p$. Then $g^{-1}$ acts near $w\in\s^2$ as rotation by an angle of $2\pi/p$. It follows 
that the action of $g$
near $x$ is given in local coordinates by $g\cdot (z_1,z_2)=(\mu_p z_1,\mu_p^{-1}z_2)$,
where $\mu_p=\exp(2\pi i/p)$. But this has been ruled out by Lemma 3.3. Now if $(f_0)_\ast (T_0\s^2)=
(f_0)_\ast (T_w\s^2)$, then $g=g^{-1}$ on $(f_0)_\ast (T_0\s^2)=(f_0)_\ast (T_w\s^2)$, which implies
that $p=2$. But we have assumed that $p>5$. 

This shows that if $C_0$ is not embedded near $x$, one has to have 
$$
p\leq 4C_\omega^2(3+2c_1(K)^2)^2.
$$
\end{proof}

For the rest of this section we assume $p>4C_\omega^2(3+2c_1(K)^2)^2$, so 
that all $C_i$ are embedded.

\begin{lemma}
For any fixed point $x$ of $G$, if there exist two distinct $J$-holomorphic curves $C_i,C_j$ from
the set $\{C_i\}$ such that $C_i$, $C_j$ intersect at $x$ non-transversely, then 
$$
p\leq (3+C_\omega)^2(c_1(K)^2+2).
$$
\end{lemma}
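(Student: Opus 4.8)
The plan is to apply Lemma 1.6 at $x$, feed in the local normal forms of $C_i$ and $C_j$ near $x$, and then reduce the resulting congruence modulo $p$. First I would fix a generator $g$ of $G$ and an $\omega$-compatible local complex coordinate system $(z_1,z_2)$ centered at $x$ in which $g\cdot(z_1,z_2)=(\mu_p^{m}z_1,\mu_p^{m'}z_2)$, where $\mu_p=\exp(2\pi i/p)$ and, by pseudofreeness, $0<m,m'<p$. Since $x\in\cup_iC_i$, Lemma 3.3 gives that the representation on $T_xX$ has determinant $\neq 1$, i.e. $m+m'\not\equiv 0\pmod p$; by Remark 1.7 this makes Lemma 1.6 applicable at $x$. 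I would also fix, once and for all, a $J$-holomorphic disk $D$ whose tangent plane at $x$ avoids the finitely many tangent lines at $x$ of the (embedded) curves $C_k$ passing through $x$; the recipe recalled after Remark 1.7 then gives $\mathrm{int}_D(x)=\sum_{C_k\ni x}n_k\le C_\omega$ by Lemma 3.1. The argument then splits according to whether $m\equiv m'\pmod p$.

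In the easy case $m\equiv m'$, Lemma 1.6 supplies integers $a_1,a_2\ge 0$ with $a_1+a_2>0$ and $(a_1+1)m+(a_2+1)m'\equiv(a_1+a_2+2)m\equiv 0\pmod p$; as $p\nmid m$ this forces $p\mid a_1+a_2+2$, hence $p\le a_1+a_2+2\le\mathrm{int}_D(x)+2\le C_\omega+2$, well inside the claimed bound. The substantive case is $m\not\equiv m'$. Then the only $G$-invariant complex lines through $x$ are the two coordinate axes, so $T_xC_i$ and $T_xC_j$ (which are $G$-invariant since $C_i,C_j$ are $G$-invariant and $x$ is fixed) are coordinate axes, and non-transversality forces $T_xC_i=T_xC_j$, say $=\{z_2=0\}$. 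Since the $C_k$ are distinct irreducible $J$-holomorphic curves, unique continuation shows at most one of $C_i,C_j$ can equal $\{z_2=0\}$ near $x$; after relabelling I may assume $C_i$ is, near $x$, a graph $z_2=c\,z_1^{q}+(\text{higher order})$ with $c\ne 0$, $q\ge 2$, and (if $C_j$ is also a graph) that $q$ is the smaller of the two exponents. Equivariance of the parametrization $z\mapsto(z,cz^{q}+\cdots)$, on which $g$ acts by $z\mapsto\mu_p^m z$, forces the weight relation $qm\equiv m'\pmod p$; moreover $q$ equals the local intersection number $C_i\cdot_xC_j$ when $C_j$ is the axis near $x$, and $q\le C_i\cdot_xC_j$ when $C_j$ is a graph of exponent $\ge q$, so in all cases $q\le C_i\cdot C_j$.

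Next I would bound $C_i\cdot C_j$ purely homologically. All $C_k$ are embedded $2$-spheres, so adjunction gives $C_k^2=-2-c_1(K)\cdot C_k$, and minimality gives $c_1(K)\cdot C_k\ge 0$ for every $k$, whence $c_1(K)^2=\sum_k n_k\,c_1(K)\cdot C_k\ge c_1(K)\cdot C_i$. Isolating the $k=i$ and $k=j$ terms in $c_1(K)\cdot C_i=\sum_k n_kC_k\cdot C_i$ and using positivity of intersections for the remaining terms yields $n_jC_i\cdot C_j\le c_1(K)\cdot C_i-n_iC_i^2=(1+n_i)c_1(K)\cdot C_i+2n_i$, so with $n_i,n_j\le C_\omega$ (Lemma 3.1) I get $C_i\cdot C_j\le(1+C_\omega)c_1(K)^2+2C_\omega\le(1+C_\omega)(c_1(K)^2+2)$, hence $q\le(1+C_\omega)(c_1(K)^2+2)$. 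Finally, Lemma 1.6 gives $a_1,a_2\ge 0$, $a_1+a_2>0$, with $(a_1+1)m+(a_2+1)m'\equiv 0\pmod p$ and $a_1,a_2\le a_1+a_2\le\mathrm{int}_D(x)\le C_\omega$; substituting $m'\equiv qm$ and cancelling the unit $m$ modulo $p$ gives $(a_1+1)+(a_2+1)q\equiv 0\pmod p$, whose left side is a positive integer $\le(C_\omega+1)(1+q)\le(C_\omega+1)(C_\omega+2)(c_1(K)^2+2)\le(C_\omega+3)^2(c_1(K)^2+2)$, forcing $p$ to be at most this.

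The step I expect to be the main obstacle is the local analysis in the case $m\not\equiv m'$: pinning down that embeddedness together with $G$-invariance forces each curve near $x$ to be a graph $z_2=cz_1^{q}+\cdots$ carrying the weight relation $qm\equiv m'$, cleanly handling the degenerate possibility that one of $C_i,C_j$ is a coordinate axis near $x$ (via unique continuation), and passing correctly from the local intersection multiplicity at $x$ to the global pairing $C_i\cdot C_j$ and thence to $c_1(K)^2$. Once those are in place, the congruence bookkeeping and the adjunction estimate are routine.
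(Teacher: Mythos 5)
Your proposal is correct and follows essentially the same route as the paper's proof: the congruence from Lemma 1.6 at $x$, combined with the equivariance of the local graph $z_2=cz_1^{q}+\cdots$ forcing $m'\equiv qm \pmod p$, yields $p\le (C_\omega+1)(1+q)$, and then adjunction, minimality and positivity of intersections bound the tangency order $q\le C_i\cdot C_j$ in terms of $c_1(K)^2$ and $C_\omega$. The paper runs the identical chain of inequalities in the opposite direction (first proving $\max(m_1,m_2)\ge (3+C_\omega)^{-1}p$, hence $C_i\cdot C_j\ge (3+C_\omega)^{-1}p$, then bounding $c_1(K)\cdot C_i$ from below), so the difference is purely organizational.
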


\begin{proof}
First of all, we shall prove that for any $1\neq g\in G$, 
if the action of $g$ near $x$ is given in local
coordinates by $g\cdot (z_1,z_2)=(\mu_p^{m_1}z_1,\mu_p^{m_2}z_2)$, where 
$\mu_p=\exp(2\pi i/p)$ 
and $0<m_1,m_2<p$, then 
$$
\max (m_1,m_2)\geq (3+C_\omega)^{-1} p. 
$$
To see this, if both $m_1,m_2$ are less than $(3+C_\omega)^{-1} p$, then by Lemma 3.1, Lemma 1.6,
\begin{eqnarray*}
(C_\omega+2)\cdot (3+C_\omega)^{-1} p & \geq & (\sum_i n_i +2)\cdot (3+C_\omega)^{-1} p\\
& \geq  & (\text{int}_D(x)+2)\cdot (3+C_\omega)^{-1} p\\
& \geq  & (a_1+1)m_1+(a_2+1)m_2\\
& \geq & p,
\end{eqnarray*}
which is a contradiction. Here $D$ is chosen such that it is not tangent to any of the curves 
in $\{C_i\}$ which contains $x$, and consequently, $\text{int}_D(x)\leq \sum_i n_i$ by Theorem 7.1 in Micallef and White \cite{MW} (notice that we have 
assumed that each $C_i$ is embedded).  

With the preceding understood, since $C_i$, $C_j$ intersect at $x$ non-transversely, there exist 
local coordinates $z_1,z_2$ centered at $x$, such that locally $C_i$ is given by $z_2=0$, and
$C_j$ is given by the graph of $z_2=z_1^m + \cdots \mbox{(higher order terms)}$.  Let $g\in G$
be the element which acts on $C_i$ by a rotation of angle $2\pi/p$ near $x$. Then the action of $g$ 
near $x$ is given by $g\cdot (z_1,z_2)=(\mu_p z_1,\mu_p^{m}z_2)$. We have just shown that
$$
m=\max (1,m)\geq (3+C_\omega)^{-1} p, 
$$
which implies that $C_i\cdot C_j\geq m\geq (3+C_\omega)^{-1} p$. 

Now we write $c_1(K)\cdot C_i=\sum_{k\neq i} n_k C_k\cdot C_i +n_i C_i^2$, and with the adjunction
inequality, we have
$$
\sum_{k\neq i} n_k C_k\cdot C_i +(n_i +1)C_i^2+2\geq 0. 
$$
This gives rise to
$$
C_i^2\geq -\frac{1}{n_i+1} (2+\sum_{k\neq i} n_k C_k\cdot C_i).
$$
Then we have 
\begin{eqnarray*}
c_1(K)\cdot C_i & = & \sum_{k\neq i} n_k C_k\cdot C_i +n_i C_i^2\\
                          & \geq & \sum_{k\neq i} n_k C_k\cdot C_i 
                          - \frac{n_i}{n_i+1} (2+\sum_{k\neq i} n_k C_k\cdot C_i)\\
                          &= & \frac{1}{n_i+1} (\sum_{k\neq i} n_k C_k\cdot C_i) -\frac{2n_i}{n_i+1}\\
                          &\geq &  \frac{1}{n_i+1}\cdot C_j\cdot C_i-\frac{2n_i}{n_i+1}\\
                          &\geq & \frac{1}{C_\omega+1}\cdot \frac{p}{3+C_\omega}-2.\\
\end{eqnarray*}
This implies that $p\leq (3+C_\omega)^2(c_1(K)\cdot C_i +2)\leq (3+C_\omega)^2(c_1(K)^2+2)$.
\end{proof}

\begin{corollary}
Suppose $p>(3+C_\omega)^2(c_1(K)^2+2)$. Then for any fixed point $x$,
there exist at most two distinct $C_i, C_j$ containing $x$. Moreover, $C_i, C_j$ 
intersect transversely at $x$.
\end{corollary}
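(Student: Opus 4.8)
The plan is to read both assertions off from Lemma 3.5, Lemma 3.3 and Lemma 1.6, keeping in mind that, on top of the hypothesis $p>(3+C_\omega)^2(c_1(K)^2+2)$, the standing assumption $p>4C_\omega^2(3+2c_1(K)^2)^2$ of this part of the section is in force, so that every $C_i$ is embedded. The transversality clause is then immediate: if two distinct members of $\{C_i\}$ pass through $x$, Lemma 3.5 forbids them to meet non-transversely at $x$ once $p>(3+C_\omega)^2(c_1(K)^2+2)$, so they meet transversely there. The real content is the bound ``at most two''.

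I would establish that bound by contradiction, assuming three distinct curves of $\{C_i\}$ contain $x$. Fix a generator $g$ of $G=\Z_p$ and write its action near the (isolated) fixed point $x$ in the usual linear form $g\cdot(z_1,z_2)=(\mu_p^{m_1}z_1,\mu_p^{m_2}z_2)$ with $0<m_1,m_2<p$. Each of the three curves is $G$-invariant and, being embedded, is a smooth $J$-holomorphic disk near $x$, so its tangent plane at $x$ is a complex line of $\C^2$ preserved by $dg_x=\mathrm{diag}(\mu_p^{m_1},\mu_p^{m_2})$. By the transversality clause these three lines are pairwise distinct; but a diagonal complex-linear map of $\C^2$ having three invariant lines must be a scalar multiple of the identity, forcing $m_1\equiv m_2\pmod p$.

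Now I bring in Lemma 1.6 at $x$. Since, by Lemma 3.3 and the genericity of $J$, the representation of $G$ at every fixed point of $\cup_iC_i$ has determinant $\neq 1$, Remark 1.7 ensures $x\in\alpha^{-1}(0)$ for all $r>0$ and Lemma 1.6 applies: there are $a_1,a_2\ge 0$ with $a_1+a_2>0$ and $(a_1+1)m_1+(a_2+1)m_2\equiv 0\pmod p$. Using $m_1\equiv m_2\pmod p$ this reads $(a_1+a_2+2)m_1\equiv 0\pmod p$, and as $0<m_1<p$ with $p$ prime, $a_1+a_2\ge p-2$. On the other hand, choosing an embedded $J$-holomorphic disk $D$ at $x$ not tangent to any curve of $\{C_i\}$ through $x$ (such $D$ abound by \cite{T}), the Micallef--White recipe together with the embeddedness of the $C_i$ gives $\mathrm{int}_D(x)=\sum_{C_i\ni x}n_i\le\sum_i n_i\le C_\omega$ by Lemma 3.1. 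Hence $p-2\le a_1+a_2\le\mathrm{int}_D(x)\le C_\omega$, i.e. $p\le C_\omega+2$, contradicting $p>(3+C_\omega)^2(c_1(K)^2+2)$. This rules out a third curve through $x$.

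The step I expect to need the most care is the scalar case $m_1\equiv m_2\pmod p$: one must check that the hypotheses of Lemma 1.6 are genuinely available there --- which is exactly what Lemma 3.3 and Remark 1.7 supply --- and confirm that the congruence really forces $a_1+a_2\ge p-2$ rather than something weaker. The remainder is routine manipulation of inequalities already in hand from Lemmas 2.4--2.7, 3.1 and 3.5.
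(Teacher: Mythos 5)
Your proof is correct and follows essentially the same route as the paper: Lemma 3.5 supplies the transversality clause, and the heart of the matter in both arguments is showing that the linearized $G$-action at $x$ is not scalar, so that there are only two invariant tangent directions and hence at most two embedded invariant curves through $x$. The paper excludes the scalar case by observing that the inequality $\max(m_1,m_2)\ge (3+C_\omega)^{-1}p>1$, established for every $1\ne g\in G$ at the start of the proof of Lemma 3.5, would fail for the power of $g$ with weights $(1,1)$, whereas you re-run the Lemma 1.6 congruence directly in the scalar case to reach the sharper contradiction $p\le C_\omega+2$; both versions rest on the same combination of Lemma 1.6, Lemma 3.1 and the Micallef--White local intersection computation.
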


\begin{proof}
Since $p>3+C_\omega$, we have $\max (m_1,m_2)\geq (3+C_\omega)^{-1} p>1$ for any $1\neq g\in G$
whose action is given in local coordinates by $g\cdot (z_1,z_2)=(\mu_p^{m_1} z_1,\mu_p^{m_2} z_2)$.
It follows that the action
of $G$ at $x$ has two distinct eigenvalues. If $x$ is contained in more than two distinct
$J$-holomorphic curves from the set $\{C_i\}$, there must be two distinct $C_i, C_j$ intersecting
non-transversely at $x$, which contradicts $p>(3+C_\omega)^2(c_1(K)^2+2)$. 
\end{proof}

With the preceding understood, we assume $p> (3+C_\omega)^2(c_1(K)^2+2)$.
Then for any $C_i$, there are $4$ possibilities:
\begin{itemize}
\item [{(1)}] $C_i$ does not intersect with any other curves in $\{C_i\}$;
\item [{(2)}] $C_i$ intersects with exactly one $C_j$ at exactly one fixed point;
\item [{(3)}] $C_i$ intersects with exactly one $C_j$ at two fixed points;
\item [{(4)}] $C_i$ intersects with two distinct $C_j, C_k$ at two fixed points.
\end{itemize}

Note that since $C_i$ is embedded, one has $c_1(K)\cdot C_i+C_i^2+2=0$. 

Case (1): $n_i C_i^2+ C_i^2+2=c_1(K)\cdot C_i+C_i^2 +2=0$, which implies 
$n_i=1$ and $C_i^2=-1$. This contradicts the minimality of $(X,\omega)$.

Case (2): $n_i C_i^2+n_j+C_i^2+2=c_1(K)\cdot C_i+C_i^2 +2=0$, which implies $C_i^2=-1$
if $n_j<n_i$. Hence in this case, one must have $n_j\geq n_i$ by the minimality of $(X,\omega)$.

Case (3):  $n_i C_i^2+2n_j+C_i^2+2=c_1(K)\cdot C_i+C_i^2 +2=0$, which implies $C_i^2=-1$
if $n_j<n_i$. By the symmetry between $i$ and $j$, we see that $n_i=n_j$, and 
$C_i^2=C_j^2=-2$. Moreover, $c_1(K)\cdot C_i=c_1(K)\cdot C_j=0$. 

Case (4): We assume that $n_i\geq n_j, n_k$. Then in this case,
$$
n_i C_i^2+n_j+n_k+C_i^2+2=c_1(K)\cdot C_i+C_i^2 +2=0,
$$
which implies that $n_i=n_j=n_k$ and $C_i^2=-2$. Moreover, $c_1(K)\cdot C_i=0$.

From the preceding analysis, it is easily seen that Case (2) can not occur, and that
for any $C_i$, $c_1(K)\cdot C_i=0$.
It follows that $c_1(K)^2=\sum_i n_i c_1(K)\cdot C_i=0$, which is a contradiction. 

This completes the proof of the Main Theorem. 

\section{Proof of Theorem 1.8}

\begin{lemma}
Let $(X,\omega)$ be a symplectic homology $\C\P^2$ with 
$c_1(K)\cdot [\omega]>0$. Then any symplectic $\Z_p$-action of prime
order on $X$ must be pseudofree and $p$ must be odd.
\end{lemma}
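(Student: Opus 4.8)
The plan is to derive both conclusions from classical fixed-point theory, since — as explained in the discussion preceding Theorem~1.8 — the equivariant Seiberg--Witten input available for $b_2^{+}>1$ cannot be used here. First I would record that every symplectic $\Z_p$-action on $X$ is homologically trivial: $H^2(X;\Z)\cong\Z$ has only the automorphisms $\pm\mathrm{id}$, and since the generator $g$ satisfies $g^{\ast}[\omega]=[\omega]$ with $[\omega]\ne 0$ (indeed $[\omega]^2>0$), the action on $H^2(X;\R)\cong\R$ must be trivial; in particular the intersection form of $X$, being of rank one and representing a class of positive square, is positive definite, and $g$ acts trivially on $H^{\ast}(X;\Q)$ and on $H^{\ast}(X;\Z/p)$.

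To prove pseudofreeness I would argue by contradiction: suppose the fixed-point set $X^{\Z_p}$ has a $2$-dimensional component $\Sigma$. Since the fixed-point set of a finite group of symplectomorphisms is a symplectic submanifold, $\Sigma$ is a closed embedded symplectic surface, hence orientable of some genus $g(\Sigma)$. The Smith inequality for the $\Z_p$-action, with coefficients in the field $\Z/p$, gives $\dim_{\Z/p}H^{\ast}(X^{\Z_p};\Z/p)\le\dim_{\Z/p}H^{\ast}(X;\Z/p)=3$, and as $\Sigma$ alone contributes $2g(\Sigma)+2$ to the left-hand side we must have $g(\Sigma)=0$, i.e.\ $\Sigma\cong\s^2$. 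Choosing a generator $u$ of $H^2(X;\Z)$ with $u^2=1$ and writing $c_1(K)=\kappa u$, $[\omega]=\tau u$ ($\tau\in\R$), $\mathrm{PD}[\Sigma]=du$, the conditions $\int_\Sigma\omega=\tau d>0$ and $c_1(K)\cdot[\omega]=\kappa\tau>0$ force $\kappa$ and $d$ to have the same sign; in particular $d\ne 0$, so $[\Sigma]^2=d^2\ge 1$ and $c_1(K)\cdot[\Sigma]=\kappa d>0$. But the adjunction equality for the embedded symplectic sphere $\Sigma$ reads $-2=[\Sigma]^2+c_1(K)\cdot[\Sigma]$, whence $c_1(K)\cdot[\Sigma]=-2-d^2<0$ — a contradiction. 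Hence $X^{\Z_p}$ consists of isolated points only, i.e.\ the action is pseudofree.

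For the parity of $p$, I would suppose $p=2$. By the previous step the involution $g$ has only isolated fixed points, and at each of them the differential of $g$ is a nontrivial linear involution of $\C^2$ with no nonzero fixed vector, hence equal to $-\mathrm{id}$. The Atiyah--Singer $G$-signature theorem then yields $\mathrm{Sign}(g,X)=\sum_{x\in X^{\Z_2}}\bigl(\tfrac{1+(-1)}{1-(-1)}\bigr)^{2}=0$. On the other hand $g$ acts trivially on $H^2(X;\R)\cong\R$, which carries a positive definite form, so $\mathrm{Sign}(g,X)=\mathrm{Sign}(X)=1$; this contradiction shows $p$ is odd.

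The proof has no single hard step; the points that require care are the identification of a two-dimensional fixed component as a genuine symplectic submanifold (so that the adjunction equality applies and $[\Sigma]\ne 0$), and combining $c_1(K)\cdot[\omega]>0$ with the symplectic condition to fix the sign of $c_1(K)\cdot[\Sigma]$ — after that the Smith inequality and the $G$-signature computation are routine. One could equally well replace the Smith-inequality step by Bredon's classification of fixed-point sets of $\Z_p$-actions on mod-$p$ cohomology $\C\P^2$'s, which forces the fixed set to be three isolated points or a mod-$p$ homology sphere together with a point, reaching the same conclusion.
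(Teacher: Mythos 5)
Your proof is correct. The pseudofreeness half is essentially the paper's argument: both reduce to showing that a $2$-dimensional fixed component would be an embedded symplectic sphere of positive square with $c_1(K)\cdot[\Sigma]>0$, contradicting adjunction; the only difference is that you derive the genus-zero claim from the Smith inequality $\dim_{\Z/p}H^{\ast}(X^{\Z_p};\Z/p)\le 3$, whereas the paper simply quotes Edmonds--Ewing, so your version is a bit more self-contained. The oddness half genuinely diverges. The paper first proves a purely topological fact --- a smooth involution of a homology $\C\P^2$ with only isolated fixed points is impossible, because the intersections of $\Sigma$ with $g\cdot\Sigma$ pair off under $g$ and force $(g\cdot\Sigma)\cdot\Sigma\equiv 0\pmod 2$, against $(g\cdot\Sigma)\cdot\Sigma=\pm 1$ --- and then combines this with pseudofreeness. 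You instead note that a pseudofree involution has $dg_x=-\mathrm{id}$ at every fixed point, so each local contribution $\bigl(\frac{\lambda+1}{\lambda-1}\bigr)^2$ to the $G$-signature vanishes, giving $\mathrm{Sign}(g,X)=0$ against the value $1$ forced by homological triviality and positive definiteness. Both are clean; the paper's parity count is more elementary (no index theory), while your $G$-signature computation has the small advantage of also handling the empty-fixed-set case without a separate remark and of fitting the toolkit already used elsewhere in the paper (Section 5 invokes the $G$-signature theorem in a similar spirit).
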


\begin{proof}
First of all, we show that a smooth involution on a homology $\C\P^2$
must have a $2$-dimensional component in the fixed point set (cf. \cite{Ed1}).
Suppose $g$ is a smooth involution which has only isolated fixed points.
Let $\Sigma$ be a smoothly embedded surface in $X$ which represents 
a generater of $H_2(X)$. By slightly perturbing $\Sigma$ we may assume 
that $\Sigma$ does not contain any fixed points of $g$, and furthermore,
$g\cdot \Sigma$ and $\Sigma$ intersect transversely. It is clear that
the intersection points of $g\cdot \Sigma$ and $\Sigma$ come in pairs,
so that the intersection number $(g\cdot\Sigma)\cdot \Sigma=0 \pmod{2}$.
However, since $\Sigma$ represents a generater of $H_2(X)$ for a homology
$\C\P^2$, $(g\cdot\Sigma)\cdot \Sigma=1 \pmod{2}$, which is a contradiction.

Secondly, we show that any symplectic $\Z_p$-action on $X$ must be pseudofree.
Suppose $Y$ is a $2$-dimensional component in the fixed point set. Then
since the action is naturally homologically trivial, $Y$ must be an
embedded $2$-sphere (cf. \cite{EE}), which is also naturally symplectic. From 
$Y\cdot [\omega]>0$ and the assumption that $c_1(K)\cdot [\omega]>0$,
we see that $c_1(K)\cdot Y>0$ also. But this violates the adjunction
inequality for $Y$ since we also have $Y^2>0$. Hence the lemma.
\end{proof}

With the preceding lemma, the following theorem of Edmonds and Ewing
will play a crucial role in the proof of Theorem 1.8.

\begin{theorem}
{\em(}Edmonds and Ewing, \cite{EE}{\em)} 
The fixed point set structure of a locally
linear, pseudofree, topological $\Z_p$-action of odd order on a homology
$\C\P^2$ is the same as that of a linear action on $\C\P^2$.
\end{theorem}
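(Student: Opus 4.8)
I would combine the Lefschetz fixed point theorem with the Atiyah--Singer $G$-signature theorem, and then settle a number-theoretic rigidity statement about the resulting fixed-point data. Write $G=\Z_p$ with generator $g$ and $p$ odd. Every ring automorphism of $H^*(\C\P^2;\Z)$ acts on $H^2$ by $\pm1$ and is determined by that sign, so an order-$p$ automorphism with $p$ odd is trivial; hence the $G$-action on $X$ is homologically trivial, in particular orientation-preserving. For each $1\le k\le p-1$ the Lefschetz number is then $L(g^k)=\chi(X)=3$. Pseudofreeness forces every nontrivial $g^k$ to have a finite fixed set, and each isolated fixed point of an orientation-preserving locally linear action contributes $+1$ to $L(g^k)$; hence $|\operatorname{Fix}(g^k)|=3$ for every $k$, and since $\operatorname{Fix}(g)\subseteq\operatorname{Fix}(g^k)$ we get $X^G=\{x_1,x_2,x_3\}$, fixed by the whole group. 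Near $x_j$ the action is conjugate, as a locally linear action, to the linear $\Z_p$-representation on $\C^2$ with weights $a_j,b_j\in(\Z/p)^\times$, well-defined up to interchanging the two. By contrast, a linear $\Z_p$-action on $\C\P^2$ with pairwise distinct projective weights $c_0,c_1,c_2$ has exactly three fixed points, with weight system $\{(c_1-c_0,c_2-c_0),(c_0-c_1,c_2-c_1),(c_0-c_2,c_1-c_2)\}$; the theorem asserts that $\{(a_j,b_j)\}_{j=1}^3$ has this shape, up to the evident symmetries (permuting the three points, swapping the two weights at each, and multiplying all weights by a common unit).

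\textbf{Step 2: the $G$-signature / $\rho$-invariant equations.} I would apply the $G$-signature theorem. Since the action is only topological and locally linear, I would pass to the free action obtained by deleting small invariant balls around $x_1,x_2,x_3$, producing a compact $W^4$ with $\partial W=\bigsqcup_j L(p;a_j,b_j)$, and invoke the Atiyah--Patodi--Singer signature theorem: its only boundary terms are the $\rho$-invariants of these lens spaces, which are explicit Dedekind-type sums depending only on the topological lens-space structure. Using that the action is homologically trivial and $\operatorname{sign}(X)=1$, this yields for every $1\le k\le p-1$
$$
\sum_{j=1}^{3}\left(\frac{\zeta^{ka_j}+1}{\zeta^{ka_j}-1}\right)\left(\frac{\zeta^{kb_j}+1}{\zeta^{kb_j}-1}\right)=1,\qquad \zeta=e^{2\pi i/p},
$$
together with the constraint that the linking forms of the links $L(p;a_j,b_j)$ cancel in the $\Z[1/p]$-homology of the punctured orbit space. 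The weight system of any linear action on $\C\P^2$ satisfies exactly these same relations, so the topology alone cannot distinguish the two; the content of the theorem is that nothing else can either.

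\textbf{Step 3: number-theoretic rigidity (the crux).} What remains is to show that this system --- the $p-1$ cotangent-sum identities plus the linking-form relation --- is rigid: its only solutions are the linear ones. This is where essentially all the work is, with no topology left. I would clear denominators to turn the identities into a single relation in $\Q[x]/(\Phi_p(x))$, then (i) examine poles and residues near $x=1$ and at roots of unity to extract congruences among the $a_j$ and $b_j$, (ii) use Dedekind-sum reciprocity to compress the signature defects into a bounded combinatorial gadget attached to the triple, and (iii) run a descent on $p$ (via reciprocity) or a direct combinatorial case analysis to force the weight triple into the linear normal form, up to the symmetries above; Smith-theoretic information on the mod-$p$ cohomology of $X^G$ can be fed in if the signature-and-linking data is not quite enough by itself. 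I expect step (iii) --- showing that these constraints leave no room for a non-linear weight system on a homology $\C\P^2$ --- to be the main obstacle. Once the fixed-point data is matched with that of a linear action, the asserted equivalence of fixed-point-set structures is immediate.
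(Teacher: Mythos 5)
The paper does not actually prove this statement: it is Theorem 4.2, quoted from Edmonds and Ewing \cite{EE} and used as a black box in the proof of Theorem 1.8, so there is no internal proof to compare yours against. Judged on its own terms, your outline reproduces the general architecture of the Edmonds--Ewing argument: the Lefschetz count giving exactly three fixed points, the passage to the free action on the complement of invariant balls and the $G$-signature identities expressed through $\rho$-invariants of the lens spaces $L(p;a_j,b_j)$, and finally a rigidity claim for the resulting cotangent sums. Your Steps 1 and 2 are correct as far as they go: homological triviality for odd $p$, local index $+1$ at each isolated fixed point of an orientation-preserving locally linear action, hence $|\mathrm{Fix}|=3$, and the reduction of the $G$-signature theorem for a merely locally linear action to Atiyah--Patodi--Singer on the punctured orbit space are all sound.

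The genuine gap is Step 3, which you yourself flag as ``the crux'' and for which you offer only a list of techniques you \emph{would} try (clearing denominators, residues at roots of unity, Dedekind reciprocity, descent on $p$, possibly Smith theory). This is not a sketch that completes routinely: the assertion that the only solutions $\{(a_j,b_j)\}_{j=1}^{3}$ of the $G$-signature system on a homology $\C\P^2$, subject to the auxiliary torsion/linking constraints, are the linear triples $(a,b)$, $(p-a,b-a)$, $(p-b,p+a-b)$ is precisely the substantive content of the Edmonds--Ewing theorem and occupies the bulk of their paper; it is not a formal consequence of reciprocity, and it is not even obvious a priori that the signature identities plus linking data suffice without further input. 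As written, your argument shows that the fixed-point data of a locally linear pseudofree action satisfies the same necessary conditions as linear data, but the theorem is the uniqueness statement, and that is the part left unproved. So the proposal is a correct framing of the problem with the decisive step missing.
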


More concretely, a locally linear, pseudofree, topological $\Z_p$-action
of odd order has three fixed points $x_1,x_2,x_3\in X$. Fix a generater $g$
of the group. Then at each $x_i$, there is a pair of integers $(a_i,b_i)$ 
(unordered) satisfying $0<a_i,b_i<p$, such that the induced representation 
of $g$ on the tangent space at $x_i$ is given by
$$
(z_1,z_2)\mapsto (\mu_p^{a_i}z_1,\mu_p^{b_i}z_2), \mbox{ where }
\mu_p=\exp(2\pi i/p),
$$
for some complex structure on the tangent space which is compatible with
the orientation. Note that $(a_i,b_i)$ is unique up to a change 
of sign, i.e., a change to $(p-a_i,p-b_i)$. (However, if requiring that the
complex structure on the tangent space is $\omega$-compatible, then 
$(a_i,b_i)$ is uniquely determined.) With this understood, Theorem 4.2
says that $\{(a_i,b_i)\}$ is given by
$$
(a,b), (p-a,b-a), (p-b,p+a-b)
$$
for some $0<a<b<p$. 

With the preceding understood, Theorem 1.8 follows from the following 
proposition as we explained at the end of Section 1.

\begin{proposition}
For sufficiently large $r>0$, the $r$-version of Taubes' perturbed 
Seiberg-Witten equations associated to the square of the equivariant 
canonical bundle has a solution $((\alpha,\beta),a)$ which is fixed 
under the group action.
\end{proposition}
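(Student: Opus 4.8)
The plan is to trade the $G$-equivariant problem on $X$ for an honest orbifold Seiberg--Witten problem on the quotient, and then to run the usual $b_2^+=1$ wall-crossing argument there; the one place where something genuinely new is needed is the computation of the expected dimension of the orbifold moduli space, and that is exactly where the Edmonds--Ewing description of the fixed-point data (Theorem 4.2) gets used.

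First I would pass to the quotient. Since the $\Z_p$-action is symplectic and, by Lemma 4.1, pseudofree, $\hat X:=X/\Z_p$ is a closed symplectic $4$-orbifold with exactly three isolated $\Z_p$-orbifold points $\hat x_1,\hat x_2,\hat x_3$, whose local uniformizing models are the linear actions with the (unordered) weights $(a_k,b_k)$ supplied by Theorem 4.2. Because the action preserves $\omega$ and an $\omega$-compatible $J$ near the fixed points, the equivariant canonical Spin$^c$ structure is $G$-equivariant, hence so is its square $K_\omega^{\otimes 2}$ (used as the twisting bundle in Taubes' parametrization), and all of these descend to orbifold data $\hat{\mathfrak s}$ on $\hat X$; likewise any $\Z_p$-invariant metric, almost complex structure and Taubes perturbation $2\pi r\,\omega+\mu$ descend. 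Under this correspondence, $\Z_p$-fixed solutions of the $r$-version of Taubes' perturbed equations for $K_\omega^{\otimes 2}$ on $X$ are precisely the solutions of the corresponding orbifold equations for $\hat{\mathfrak s}$ on $\hat X$, so it suffices to produce a solution downstairs for all large $r$.

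Since $b_2(X)=1$ and the $\Z_p$-action on $H^2(X;\R)=\R$ is trivial, we have $b_2^+(\hat X)=1$ and $b_1(\hat X)=0$, so the orbifold Seiberg--Witten invariant of $\hat{\mathfrak s}$ is chamber-dependent and controlled by a wall-crossing formula of Li--Liu type adapted to orbifolds with isolated singularities: for a suitably generic $\Z_p$-invariant perturbation the two chamber-invariants $SW^{\pm}(\hat{\mathfrak s})$ differ by $\pm1$ as soon as the expected dimension $\hat d(\hat{\mathfrak s})$ is non-negative and even. Here $\hat d(\hat{\mathfrak s})$ is given by the orbifold index formula: a bulk term $\tfrac14\big(c_1(\hat{\mathfrak s})^2-(2\chi+3\sigma)\big)$, which for the square of the canonical bundle is $\tfrac1p$ of the corresponding quantity $2\,c_1(K)^2$ computed on $X$, together with three local corrections at $\hat x_1,\hat x_2,\hat x_3$ of the usual $G$-signature / Dedekind-sum type, depending only on $p$ and on $(a_k,b_k)$. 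Equivalently $\hat d(\hat{\mathfrak s})$ is the dimension of the $\Z_p$-fixed part of the Seiberg--Witten deformation complex on $X$, which by the Atiyah--Singer $G$-index theorem is determined by those same fixed-point weights. Substituting the Edmonds--Ewing data $(a,b),(p-a,b-a),(p-b,p+a-b)$ and using Dedekind-sum reciprocity, the corrections must assemble, together with $\tfrac1p\cdot 2c_1(K)^2$, into a non-negative even integer --- this is essentially the statement that the relevant equivariant index for the standard linear $\Z_p$-action on $\C\P^2$ is non-negative. I expect this arithmetic verification, which is impossible to carry out for a general $4$-manifold precisely because the fixed-point data is then not pinned down (as explained in \S1), to be the main obstacle.

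Finally I would identify the chamber. For $r\gg0$ the Taubes perturbation $2\pi r\,\omega+\mu$ lies in a fixed ``Taubes chamber,'' the one on which $(2\pi c_1(\hat{\mathfrak s})-2\pi r[\hat\omega]-\cdots)\cdot[\hat\omega]$ is negative since this period is dominated by $-r[\hat\omega]^2<0$. Because $c_1(\hat{\mathfrak s})\cdot[\hat\omega]$ is a positive multiple of $c_1(K)\cdot[\omega]>0$, the standard a priori vanishing for the $b_2^+=1$ invariant (exactly as in the non-equivariant statement quoted in \S1) forces the chamber-invariant on the opposite side to vanish, so by the wall-crossing formula the Taubes-chamber invariant equals $\pm1\neq0$. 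Hence for all sufficiently large $r$ the orbifold equations for $\hat{\mathfrak s}$ have a solution, which pulls back to a $\Z_p$-fixed solution of the $r$-version of Taubes' perturbed Seiberg--Witten equations for $K_\omega^{\otimes 2}$ on $X$. The remaining points --- descent of the data, the precise form of the orbifold wall-crossing formula, and the Dedekind-sum computation of $\hat d(\hat{\mathfrak s})$ --- are then routine, with the last of these being where Theorem 4.2 is indispensable.
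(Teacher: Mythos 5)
Your strategy coincides with the paper's: interpret $\Z_p$-fixed solutions as solutions of the orbifold Seiberg--Witten equations on $X/\Z_p$, invoke a $b_2^+=1$ wall-crossing formula $|SW^G(2K)\pm SW^G(K-2K)|=1$ valid when the formal dimension is non-negative, kill the conjugate invariant $SW^G(-K)$ by Taubes' $SW\Rightarrow Gr$ together with $c_1(K)\cdot[\omega]>0$, and reduce everything to showing $d(2K)\geq 0$ using the Edmonds--Ewing fixed-point data. Up to that point your outline is accurate and matches Section 4.

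The gap is that the decisive step --- the verification $d(2K)\geq 0$ --- is only asserted as an expectation, and the heuristic you offer for it does not work. You suggest the non-negativity ``is essentially the statement that the relevant equivariant index for the standard linear $\Z_p$-action on $\C\P^2$ is non-negative,'' but the hypothesis of Theorem 1.8 is $c_1(K)\cdot[\omega]>0$, i.e.\ the canonical class is $+3H$ rather than the $-3H$ of the standard $\C\P^2$; the bundle $2K$ therefore does not correspond to any natural equivariant bundle on the linear model, and there is no soft reason for the index to come out non-negative. The paper's proof of this inequality is the actual content of Section 4: it evaluates the local contributions $\sum_{x=1}^{p-1} 2(\mu_p^{-2(a_i+b_i)x}-1)/\bigl((1-\mu_p^{-a_ix})(1-\mu_p^{-b_ix})\bigr)$ in closed form via Lemma 4.4, reducing each to $2(p-1)-2\delta(a_i+b_i,b_i)-2\delta(b_i,a_i)$ where $\delta(c,d)$ solves $c\equiv d\delta \pmod p$, and then pairs the six $\delta$-terms across the three Edmonds--Ewing weight pairs $(a,b)$, $(p-a,b-a)$, $(p-b,p+a-b)$ so that each pair sums to at most $p+3$, giving $d(2K)\geq \frac1p\bigl(18+6(p-1)-2(3p+6)\bigr)=0$. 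Without this computation (or an equivalent one) the proposition is not proved; with it, your argument becomes the paper's.
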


Assume the proposition momentarily. Letting $r\rightarrow\infty$, the
zero set $\alpha^{-1}(0)$ converges to a finite set of $J$-holomorphic 
curves $\{C_i\}$, such that $2c_1(K)=\sum_i n_i C_i$ for some integers 
$n_i>0$. Moreover, $\cup_i C_i$ is invariant under the group action
and contains all the fixed points except those $x_i$ such that
$2(a_i+b_i)=0\pmod{p}$. (Since $p$ is odd, this is equivalent to
$a_i+b_i=0\pmod{p}$.) With this understood, and with the congruence 
relation in Lemma 1.6 replaced by the following one
$$
(a_1+2)m_1+(a_2+2)m_2=0 \pmod{m},
$$
the same arguments for the proof of the Main Theorem, when applied to the 
set $\{C_i\}$ above, will yield a proof for Theorem 1.8.
(Regarding Remark 1.7, the new ``applicability'' condition which ensures
the hypothesis `$x\in\alpha^{-1}(0)$' in Lemma 1.6 is $2(m_1+m_2)\neq 0
\pmod{m}$, but again, since $p=m$ is odd, this is equivalent to the original
condition $m_1+m_2\neq 0\pmod{m}$.)

The proof of Proposition 4.3 goes as follows. Since $b_G^{2,+}=1$ in this
case, the equivariant Seiberg-Witten invariant (which is simply the 
Seiberg-Witten invariant of the orbifold $X/G$) is well-defined only
after specifying a choice of chambers. Let $E$ be an equivariant complex
line bundle over $X$. We denote by $SW^G(E)$ the equivariant Seiberg-Witten 
invariant defined using the associated $r$-version of Taubes' perturbed 
Seiberg-Witten equations with $r>0$ sufficiently large. Then the wall-crossing
formula gives
$$
|SW^G(E)\pm SW^G(K-E)|=1
$$
provided that the formal dimension $d(E)$ of the equivariant Seiberg-Witten 
moduli space is non-negative. (Thanks to Tian-Jun Li for explaining this 
to me.)

We consider the case where $E=2K$. Notice that $SW^G(K-E)$ must be zero,
because otherwise by Taubes' $SW\Rightarrow Gr$ theorem in \cite{T}, 
$c_1(K-E)=c_1(-K)$ 
is represented by $J$-holomorphic curves which contradicts the assumption
$c_1(K)\cdot [\omega]>0$. It follows that $SW^G(2K)=\pm 1$ if $d(2K)\geq 0$.

A formula for $d(E)$ may be found in Appendix A of \cite{C2} (see also
Lemma 3.3 in \cite{C1}). In the present case, we have
$$
d(2K)=\frac{1}{p}(c_1(2K)^2-c_1(2K)\cdot c_1(K)+\sum_{i=1}^3
\sum_{x=1}^{p-1}\frac{2(\mu_p^{-2(a_i+b_i)x}-1)}{(1-\mu_p^{-a_ix})
(1-\mu_p^{-b_ix})}), 
$$
where $\mu_p=\exp(2\pi i/p)$. 

Proposition 4.3 follows by showing that $d(2K)\geq 0$. In the calculation
of $d(2K)$, the fact that $\{(a_i,b_i)\}$ is given by
$$
(a,b), (p-a,b-a), (p-b,p+a-b)
$$
for some $0<a<b<p$ plays a crucial role. 

\begin{lemma}
Let $c,d$ be satisfying $0\leq c\leq p$, $0<d<p$, and let $\delta(c,d)$ be 
the unique
solution to $c-d\delta=0\pmod{p}$ for $0\leq\delta<p$. Then
$$
\sum_{x=1}^{p-1}\frac{2\mu_p^{cx}}{1-\mu_p^{-dx}}=p-1-2\delta(c,d).
$$
\end{lemma}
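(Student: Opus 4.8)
The plan is to reduce the identity to the special case $d=1$, $c=\delta$ by a change of summation variable, and then evaluate the resulting sum by a short induction on $\delta$. Throughout, recall that $p$ is an (odd) prime, so that $0<d<p$ forces $\gcd(d,p)=1$.

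First I would observe that $x\mapsto dx \bmod p$ is a bijection of $\{1,\dots,p-1\}$. Writing $\delta=\delta(c,d)$, so that $c\equiv d\delta\pmod p$, and setting $y\equiv dx\pmod p$ with $1\le y\le p-1$, one has $\mu_p^{-dx}=\mu_p^{-y}$ and $cx\equiv d\delta x\equiv \delta y\pmod p$, hence $\mu_p^{cx}=\mu_p^{\delta y}$. Therefore
\[
\sum_{x=1}^{p-1}\frac{2\mu_p^{cx}}{1-\mu_p^{-dx}}=\sum_{y=1}^{p-1}\frac{2\mu_p^{\delta y}}{1-\mu_p^{-y}},
\]
so it suffices to prove that $S_\delta:=\sum_{y=1}^{p-1}\mu_p^{\delta y}/(1-\mu_p^{-y})$ equals $(p-1-2\delta)/2$ for every integer $\delta$ with $0\le\delta<p$ (the endpoint $c=p$ being covered by $\delta=0$).

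For the base case $\delta=0$ I would pair the terms $y$ and $p-y$: since $\frac{1}{1-\mu_p^{-y}}+\frac{1}{1-\mu_p^{y}}=\frac{(1-\mu_p^{y})+(1-\mu_p^{-y})}{(1-\mu_p^{-y})(1-\mu_p^{y})}=1$, summing over the $(p-1)/2$ pairs gives $S_0=(p-1)/2$. For the inductive step, using $\mu_p^{y}(1-\mu_p^{-y})=\mu_p^{y}-1$ one gets $\frac{\mu_p^{y}-1}{1-\mu_p^{-y}}=\mu_p^{y}$, hence for $1\le\delta\le p-1$
\[
S_\delta-S_{\delta-1}=\sum_{y=1}^{p-1}\frac{\mu_p^{(\delta-1)y}(\mu_p^{y}-1)}{1-\mu_p^{-y}}=\sum_{y=1}^{p-1}\mu_p^{\delta y}=-1,
\]
the last equality because $\sum_{y=0}^{p-1}\mu_p^{\delta y}=0$ when $\delta\not\equiv 0\pmod p$. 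Thus $S_\delta=(p-1)/2-\delta$, and multiplying by $2$ yields the claimed formula.

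The computation is entirely elementary, so I do not expect a genuine obstacle; the only points requiring care are the bookkeeping in the change of variable $y\equiv dx\pmod p$ and verifying that the reduction indeed covers the full stated range $0\le c\le p$, $0<d<p$ (in particular the degenerate value $\delta=0$ corresponding to $c\in\{0,p\}$).
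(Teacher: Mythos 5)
Your proof is correct, but it follows a genuinely different route from the paper's. The paper introduces the generating function $\phi_{c,d}(t)=\sum_{x=1}^{p-1}\mu_p^{cx}(1-\mu_p^{-dx}t)^{-1}$, expands it as a geometric series to identify it with the rational function $\bigl(t^{p-1}+\cdots+1-pt^{\delta}\bigr)/(t^p-1)$, and then evaluates the (removable-singularity) limit at $t=1$ by differentiating numerator and denominator. You instead exploit that $x\mapsto dx\bmod p$ permutes $\{1,\dots,p-1\}$ to reduce to the normalized sums $S_\delta=\sum_y \mu_p^{\delta y}/(1-\mu_p^{-y})$, compute $S_0=(p-1)/2$ by pairing $y$ with $p-y$, and obtain the general case by the telescoping identity $S_\delta-S_{\delta-1}=\sum_y\mu_p^{\delta y}=-1$. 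Both arguments are elementary and reach the same formula; yours avoids the interchange of an infinite series with a finite sum and the limit evaluation at $t=1$, at the mild cost of invoking $\gcd(d,p)=1$ explicitly for the change of variable (which the paper also needs implicitly for $\delta(c,d)$ to be well defined) and of using that $p$ is odd in the pairing step for $S_0$ -- harmless here, since the lemma is only applied to actions of odd prime order. The bookkeeping at the endpoints $c\in\{0,p\}$, where $\delta=0$, is handled correctly.
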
 

\begin{proof}
Set $\phi_{c,d}(t)\equiv \sum_{x=1}^{p-1}\mu_p^{cx}(1-\mu_p^{-dx}t)^{-1}$.
Then
\begin{eqnarray*}
\phi_{c,d}(t) & = & \sum_{x=1}^{p-1}\mu_p^{cx}\sum_{l=0}^\infty
                     (\mu_p^{-dx}t)^l\\
              & = & \sum_{l=0}^\infty t^l (\sum_{x=1}^{p-1}\mu_p^{(c-dl)x})\\
              & = & \sum_{l=0}^\infty t^l (-1)+
                    \sum_{l=0}^\infty t^{\delta(c,d)+pl}\cdot p\\
              & = & \frac{1}{t-1}+\frac{pt^{\delta(c,d)}}{1-t^p}\\
              & = & \frac{t^{p-1}+\cdots+1-pt^{\delta(c,d)}}{t^p-1}.\\
\end{eqnarray*}
It follows that
\begin{eqnarray*}
\sum_{x=1}^{p-1}\frac{2\mu_p^{cx}}{1-\mu_p^{-dx}} & = & 2\phi_{c,d}(1)\\
   & = & 2 \cdot\frac{(t^{p-1}+\cdots+1-pt^{\delta(c,d)})^\prime|_{t=1}}
           {(t^p-1)^\prime|_{t=1}}\\
   & = & 2 \cdot\frac{(p-1)+\cdots + 1-p\delta(c,d)}{p}\\
   & = & p-1-2\delta(c,d).\\
\end{eqnarray*}
\end{proof}

With the preceding lemma, we compute 
\begin{eqnarray*}
\sum_{x=1}^{p-1}\frac{2(\mu_p^{-2(a_i+b_i)x}-1)}{(1-\mu_p^{-a_ix})
(1-\mu_p^{-b_ix})} & = & \sum_{x=1}^{p-1}
\frac{2(\mu_p^{(a_i+b_i)x}-1)}{(1-\mu_p^{-a_ix})
(1-\mu_p^{-b_ix})}\\
          & = & \sum_{x=1}^{p-1}\frac{2\mu_p^{(a_i+b_i)x}}{1-\mu_p^{-b_ix}}+
              \sum_{x=1}^{p-1}\frac{2\mu_p^{b_ix}}{1-\mu_p^{-a_ix}}\\
          & = & p-1-2\delta(a_i+b_i,b_i)+p-1-2\delta(b_i,a_i).
\end{eqnarray*}

Now without loss of generality, we assume
$$
(a_1,b_1)=(a,b), (a_2,b_2)=(p-a,b-a), (a_3,b_3)=(p-b,p+a-b).
$$
Then one can check directly that
$$
(\delta(a_1+b_1,b_1)+\delta(b_3,a_3))b=2b \pmod{p},
$$
which implies that 
$$
\delta(a_1+b_1,b_1)+\delta(b_3,a_3)=\left \{\begin{array}{ll}
2 & \mbox{ if } a+b=p\\
p+2 & \mbox{ if } a+b\neq p.\\
\end{array} \right .
$$
Similarly,
$$
(\delta(a_2+b_2,b_2)+\delta(a_3+b_3,b_3))(b-a)=3(b-a) \pmod{p},
$$
which implies that 
$$
\delta(a_2+b_2,b_2)+\delta(a_3+b_3,b_3)=\left \{\begin{array}{ll}
0 & \mbox{ if } b=2a \mbox{ and } p+a=2b\\
3 & \mbox{ if } b=2a \mbox{ or } p+a=2b\\
p+3 & \mbox{ if } b\neq 2a \mbox{ and } p+a\neq 2b,\\
\end{array} \right .
$$
and
$$
(\delta(b_1,a_1)+\delta(b_2,a_2))a=a \pmod{p},
$$
which implies that $\delta(b_1,a_1)+\delta(b_2,a_2)=p+1$. 

Finally, we note that $c_1(K)^2=9$, hence 
\begin{eqnarray*}
d(2K) & = & \frac{1}{p}(18+6(p-1)
-2\sum_{i=1}^3(\delta(a_i+b_i,b_i)+\delta(b_i,a_i))\\
& \geq & \frac{1}{p}(18+6(p-1)-2(3p+6))=0.\\
\end{eqnarray*}
This finishes the proof of Theorem 1.8. 

\section{Proof of Theorem 1.1}

Let $X$ be a complex surface with Kodaira dimension $\kappa(X)\geq 0$, and let $X$ be given a 
holomorphic $\Z_p$-action of prime order. Without loss of generality, we assume that $X$ is minimal
and the $\Z_p$-action is homologically trivial over $\Q$.

According to the Enriques-Kodaira classification \cite{BPV}, $X$ is either a surface of general type,
a $K3$ surface, or an elliptic surface (we used $\kappa(X)\geq 0$ here). Theorem 1.1 follows easily
if $X$ is a surface of general type or a $K3$ surface, for in the former case, Xiao's theorem provides
the bound $p\leq 42^2 c_1(K_X)^2$, and in the latter case, the $\Z_p$-action must be trivial
(cf. \cite{BPV}, Chapter VIII, Prop. 11.3). In what follows, we will focus on the remaining case where 
$X$ is elliptic. 

The basic idea goes as follows. Let $\pi:X\rightarrow \Sigma$ be an elliptic fibration, and let $g:
X\rightarrow X$ be an automorphism of $X$ which is homologically trivial over $\Q$. If $F$ is a 
fiber of $\pi$, then $g\cdot F$ must also be a fiber because $(g\cdot F)\cdot F=F\cdot F=0$. 
This shows that $g$ must be preserving the elliptic fibration $\pi:X\rightarrow \Sigma$. By analyzing
$g$ with respect to the fibration, Ueno \cite{Ueno} and Peters \cite{Peters} were able to show triviality
of $g$ in many circumstances. We take the same approach here, however, since we only need
to show triviality of $g$ when $g$ has sufficiently large order, the argument can be made much simpler. 
On the other hand, because the non-K\"{a}hler case was missing in Peters \cite{Peters}, we decided
to give an independent, self-contained proof here. 

\vspace{3mm}

{\bf Case (i)} $c_2(X)=12d>0$. Let $\bar{g}:\Sigma\rightarrow \Sigma$ be the automorphism 
induced by $g$. The first step is to show that $\bar{g}=1$.

\begin{lemma}
Let $\pi:X\rightarrow \Sigma$ be an elliptic fibration. Then $\pi^\ast: H^1(\Sigma;\Q)\rightarrow H^1(X;\Q)$
is injective. 
\end{lemma}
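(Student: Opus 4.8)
The statement is purely topological, so the plan is to extract it from the one structural fact we need about an elliptic fibration: that $\pi$ has connected fibers. Given that, injectivity of $\pi^\ast$ on $H^1(-;\Q)$ is the standard behaviour of the edge homomorphism of the Leray spectral sequence, and I would carry out exactly that argument. (A more hands-on route via a multisection and the transfer homomorphism is also available, and I would mention it, but I expect to use the spectral-sequence version as the primary one because it works uniformly, including in the non-K\"ahler case that the paper wants to retain.)

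\textbf{Steps.} First I would note that $\pi:X\to\Sigma$ is a proper surjective holomorphic map onto a normal curve whose general fiber, an elliptic curve, is connected; by Stein factorization (Zariski's connectedness) every fiber is then connected --- and of course this is also visible from Kodaira's list of singular fibers. Next, connectedness of fibers together with properness gives that $\pi^{-1}(U)$ is connected for every connected open $U\subseteq\Sigma$ (a splitting of $\pi^{-1}(U)$ into two disjoint open sets would push down, by properness, to a splitting of $U$), hence $R^0\pi_\ast\Q=\Q_\Sigma$. Now I would run the Leray spectral sequence $E_2^{p,q}=H^p(\Sigma;R^q\pi_\ast\Q)\Rightarrow H^{p+q}(X;\Q)$. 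The term $E_r^{1,0}$ carries no nonzero differential for $r\ge 2$: the outgoing differential lands in $E_r^{1+r,\,1-r}=0$ and the incoming one comes from $E_r^{1-r,\,r-1}=0$, both vanishing since $1-r<0$. Therefore $E_\infty^{1,0}=E_2^{1,0}=H^1(\Sigma;R^0\pi_\ast\Q)=H^1(\Sigma;\Q)$. Since $E_\infty^{1,0}$ is the bottom step of the Leray filtration on $H^1(X;\Q)$, it is a subspace of $H^1(X;\Q)$, and the resulting inclusion is precisely the edge homomorphism $\pi^\ast$. This gives the claim.

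\textbf{Alternative and main difficulty.} Alternatively, one may choose a multisection, i.e. a smooth curve $B$ with a finite map $s:B\to X$ such that $\pi\circ s:B\to\Sigma$ is finite surjective of some degree $d\ge 1$; then the transfer for the branched cover $\pi\circ s$ shows that $(\pi\circ s)^\ast=s^\ast\circ\pi^\ast$ is injective on $H^1(-;\Q)$, forcing $\pi^\ast$ to be injective. The only real subtlety here is that a multisection is obvious in the projective case but not automatic for non-K\"ahler elliptic surfaces, which is exactly the situation Theorem 1.1's proof must cover; the Leray argument sidesteps this. So there is no serious analytic obstacle: the work is just the bookkeeping that $R^0\pi_\ast\Q=\Q$ (from connectedness of fibers) and that the Leray edge map in degree $1$ is $\pi^\ast$ and sits in a degenerate position.
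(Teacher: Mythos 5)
Your argument is correct, but it takes a genuinely different route from the paper. You run the Leray spectral sequence of $\pi$: after checking $R^0\pi_\ast\Q=\Q_\Sigma$ (connected fibers via Stein factorization), you observe that $E_r^{1,0}$ supports no differentials, so $H^1(\Sigma;\Q)=E_2^{1,0}=E_\infty^{1,0}$ injects into $H^1(X;\Q)$ via the edge homomorphism, which is $\pi^\ast$. The paper instead gives a two-line elementary argument: for a nonzero $\alpha\in H^1(\Sigma;\Z)$, pick an embedded closed curve $\gamma\subset\Sigma$ with $\alpha\cdot\gamma\neq 0$, avoiding the singular values, lift it to a closed curve $\gamma'\subset X$ with $\pi(\gamma')=\gamma$, and compute $\pi^\ast\alpha\cdot\gamma'=\alpha\cdot\gamma\neq 0$. (The one point implicit there is that a closed lift of degree one exists, which holds because the restriction of $\pi$ over $\gamma$ is a torus bundle over a circle with path-connected fiber; even a degree-$k$ lift would do, since it only multiplies the pairing by $k$.) Your approach buys uniformity and generality --- it identifies exactly which piece of $H^1(X;\Q)$ comes from the base and works verbatim for any proper map with connected fibers, including the non-K\"ahler surfaces the theorem must cover, without ever producing a cycle; the paper's approach buys brevity and avoids spectral-sequence machinery entirely, detecting nontriviality by explicit evaluation on a $1$-cycle. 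Your remark that the multisection/transfer alternative is problematic in the non-K\"ahler case is a fair observation, and the paper's lifted-loop argument is precisely the cheap substitute for a multisection that sidesteps that issue.
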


\begin{proof}
Let $\alpha\in  H^1(\Sigma;\Z)$ be any non-zero element, and let $\gamma$ be an embedded 
closed curve in $\Sigma$ such that $\alpha\cdot \gamma\neq 0$. (We may assume $\gamma$ 
misses all the singular values of $\pi$.) Take any lifting $\gamma^\prime$ of $\gamma$ in $X$,
i.e., $\pi(\gamma^\prime)=\gamma$. Then $\pi^\ast \alpha \cdot \gamma^\prime
=\alpha\cdot \gamma\neq 0$, which implies that $\pi^\ast \alpha\neq 0$. 
\end{proof} 

Since $g$ is homologically trivial, we deduce from Lemma 5.1 that $\bar{g}$ is also homologically trivial.
This implies that $\bar{g}=1$ when $\text{genus}(\Sigma)>1$, and that $\bar{g}$ is
a translation when $\text{genus}(\Sigma)=1$. Furthermore for the latter case, 
note that since $c_2(X)>0$, there must be a singular fiber with non-zero Euler characteristic. 
If $\bar{g}\neq 1$, then $\bar{g}$ is free of fixed points, so that each of such fibers will generate 
$p$ disjoint copies, which is impossible when $p>c_2(X)$. 

Consider the remaining case where $\text{genus}(\Sigma)=0$. Suppose $\bar{g}\neq 1$, and
let $w_1,w_2\in \Sigma$ be the two fixed points of $\bar{g}$. Let $z\in \Sigma$, $z\neq w_1,w_2$,
be any point. By the same argument as above, the fiber $\pi^{-1}(z)$ must have zero Euler
characteristic when $p>c_2(X)$. In the next lemma, we eliminate the possibility that $\pi^{-1}(z)$
is a multiple fiber with smooth reduction provided that $p> |Tor H_2(X)|$.

\begin{lemma}
Let $\pi:X\rightarrow \Sigma$ be an elliptic fibration and $g$ be an order-$p$ automorphism of $X$ preserving the fibration, where $p>5$ when $c_2(X)=0$. Suppose $z_1,\cdots,z_p\in \Sigma$ is 
a free orbit of the induced automorphism $\bar{g}:\Sigma\rightarrow\Sigma$.  If each $\pi^{-1}(z_i)$ 
is a multiple fiber, then $|Tor H_2(X)|\geq p$.
\end{lemma}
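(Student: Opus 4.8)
The plan is to exploit two complementary properties of a multiple fiber $mF$ of multiplicity $m\ge 2$ with smooth reduction (the case relevant to the application of this lemma): its reduced fiber $F$ satisfies $m[F]=[f]$ in $H_2(X;\Z)$, where $[f]$ is the generic fiber class, so differences of reduced–fiber classes are torsion; yet $[F]$ is detected, \emph{with order exactly $m$}, by the homology local to the fiber. Running this over the $p$ fibers of the orbit exhibits a subgroup of order $\ge m^{p-1}\ge p$ inside $\mathrm{Tor}\,H_2(X;\Z)$.

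First I would record the equivariance. Relabel the orbit so that $\bar g\cdot z_i=z_{i+1}$ (indices mod $p$); since $g$ restricts to an isomorphism of schemes $\pi^{-1}(z_i)\to\pi^{-1}(z_{i+1})$, all multiplicities agree, say $\pi^{-1}(z_i)=mF_i$ with a common $m\ge 2$, and $g_*[F_i]=[F_{i+1}]$ in $H_2(X;\Z)$. As any two fibers of $\pi$ are homologous over $\Z$, we get $m[F_i]=[f]$ for all $i$, so $t_i:=[F_i]-[F_{i+1}]$ lies in $\mathrm{Tor}\,H_2(X;\Z)$, is killed by $m$, and $t_1+\cdots+t_p=0$. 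Let $T\le\mathrm{Tor}\,H_2(X;\Z)$ be the subgroup generated by the $t_i$; it suffices to prove $|T|\ge p$.

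The crux is the local analysis at each multiple fiber. We may take $\pi$ relatively minimal, by minimality of $X$; let $N_i=\pi^{-1}(\bar D_i)$ be a small fibered neighborhood of $\pi^{-1}(z_i)=mF_i$. Then $N_i$ deformation retracts onto the elliptic curve $F_i$, a nearby smooth fiber inside $N_i$ is homologous to $mF_i$, and $\partial N_i$ is a $2$-torus bundle over $\s^1$ with monodromy trivial on $H_1$ of the torus fiber. From the long exact sequence of $(N_i,\partial N_i)$ one reads off that the image of $H_2(\partial N_i)\to H_2(N_i)=\Z\langle[F_i]\rangle$ is exactly $m\Z$; equivalently (a standard fact about multiple fibers, cf. \cite{BPV}), the class of $F_i$ in $H_2(N_i,\partial N_i)$ has order exactly $m$, reflecting that the normal bundle of $F_i$ has order $m$ in $\mathrm{Pic}^0(F_i)$. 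Excision yields localization homomorphisms $r_i\colon H_2(X;\Z)\to H_2(N_i,\partial N_i)$ with $r_i([F_i])$ of order $m$ and $r_i([F_j])=0$ for $j\ne i$, since $F_j$ is disjoint from $N_i$. Writing $A_i=\langle r_i([F_i])\rangle\cong\Z/m$, the combined map $(r_1,\dots,r_p)$ restricted to $T$ lands in $\bigoplus_i A_i\cong(\Z/m)^p$ and sends $t_j$ to the vector $e_j-e_{j+1}$ (a generator of $A_j$ in slot $j$, its negative in slot $j+1$, zeros elsewhere). These vectors generate the full ``coordinate–sum–zero'' subgroup of $(\Z/m)^p$, which has order $m^{p-1}$. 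Hence $|\mathrm{Tor}\,H_2(X;\Z)|\ge|T|\ge m^{p-1}\ge 2^{p-1}\ge p$, as claimed. (In fact this argument uses only $m\ge 2$; it needs neither that $p$ is prime nor the restriction $p>5$ when $c_2(X)=0$.)

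The step I expect to be the real obstacle is the local computation showing that $F_i$ represents an order-$m$ class in $H_2(N_i,\partial N_i)$: one must pin down the boundary $\partial N_i$ and the map $H_2(\partial N_i)\to H_2(N_i)$, or instead invoke the normal–form description of a neighborhood of a multiple fiber of type ${}_mI_0$, or the classical computation of the order of its normal bundle. Everything else — the equivariance bookkeeping, the reduction to $\mathrm{Tor}\,H_2$, and the elementary verification that $\{e_j-e_{j+1}\}$ generate an order-$m^{p-1}$ subgroup of $(\Z/m)^p$ — is routine.
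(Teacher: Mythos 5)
Your overall strategy---producing the torsion classes $t_i=[F_i]-[F_{i+1}]$ and showing they generate a subgroup of order at least $m^{p-1}$---aims at the right answer, but the detection mechanism fails, and it fails at exactly the step you flagged as the obstacle. The claim that $[F_i]$ has order exactly $m$ in $H_2(N_i,\partial N_i)$ is false; in fact $r_i([F_i])=0$. First, $H_2(N_i,\partial N_i)\cong H^2(N_i)\cong H^2(T^2)\cong\Z$ by Lefschetz duality and the retraction, so this group is torsion-free and cannot contain an element of order $m$ at all. Second, the composite $H_2(N_i)\to H_2(N_i,\partial N_i)\cong H^2(N_i)\cong \mathrm{Hom}(H_2(N_i),\Z)$ is the intersection form of $N_i$, which vanishes because $F_i^2=0$ (the normal bundle of the reduction of a multiple fiber is a \emph{torsion}, hence degree-zero, element of $\mathrm{Pic}^0(F_i)$). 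Geometrically: $N_i$ is the degree-zero disk bundle over the elliptic curve $F_i$, so the circle bundle $\partial N_i\to F_i$ has Euler number $0$, is trivial, and admits a section whose image is a torus in $\partial N_i$ homologous to $F_i$ in $N_i$; thus $H_2(\partial N_i)\to H_2(N_i)$ is surjective, not of image $m\Z$. Concretely, in the model $N_i=(\C/\Lambda\times\Delta(s))/\Z_m$ with generator $(z,s)\mapsto(z+k/m,\,e^{2\pi i/m}s)$ and $\gcd(k,m)=1$, the boundary is a $3$-torus whose $H_2$ contains, besides the fiber class (mapping to $m[F_i]$), a ``spiralling'' torus swept out by the translation orbit as one traverses $\partial\Delta$ once, and this maps to $\pm k[F_i]$. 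The order-$m$ element of $\mathrm{Pic}^0(F_i)$ you invoke lives in a divisible group and leaves no trace in relative $H_2$ of the neighborhood; what it does control is the fundamental group data of the fiber complement.

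This is not a local patch away from being correct: the relations among the classes $[F_i]$ are governed by global monodromy over $\Sigma$, so no invariant computed fiber-neighborhood by fiber-neighborhood can certify the lower bound. The paper's proof goes through exactly this global route. It endows $\Sigma$ with an orbifold structure having a point of multiplicity $m_j$ under each multiple fiber and quotes Friedman--Morgan: $\pi_\ast:\pi_1(X)\to\pi_1^{orb}(\Sigma)$ is an isomorphism when $c_2(X)>0$, while for $c_2(X)=0$ (where $p>5$ forces $\pi_1^{orb}(\Sigma)$ to be infinite) there is an exact sequence $0\to\Z\oplus\Z\to\pi_1(X)\to\pi_1^{orb}(\Sigma)\to 1$. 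Abelianizing, $\mathrm{Tor}\,H_1(X)$ contains the quotient of $\oplus_j\Z_{m_j}$ by the cyclic subgroup generated by $(1,\dots,1)$---your ``coordinate-sum-zero'' group, but realized on the $H_1$ side by the boundary circles of disks about the $z_i$---and this has order at least $m^{p-1}\geq p$; one then transfers to $\mathrm{Tor}\,H_2(X)$ by Poincar\'e duality and the universal coefficient theorem. If you wish to keep your $H_2$ formulation with the classes $t_i$, you would still need to detect them through this computation of $\pi_1(X)$ (or an equivalent Mayer--Vietoris over the whole base), not through the neighborhoods $N_i$ alone.
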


\begin{proof}
First, in order to compute the fundamental group of $X$, it is convenient to give $\Sigma$ an orbifold structure as follows. For any $z\in \Sigma$, if the fiber over $z$ is a multiple fiber of multiplicity $m$,
then $z$ is an orbifold point of multiplicity $m$. With this understood, we let $t_j$, $j=1,2,\cdots,n$,
be the set of orbifold points of $\Sigma$, and let $m_j$ be the multiplicity at $t_j$.  Note that 
each $z_i$, $i=1,2,\cdots,p$, is an orbifold point of $\Sigma$ and the multiplicity at each $z_i$ is
the same; we denote it by $m$.

With the preceding understood, we divide our discussions into two cases. 
Suppose $c_2(X)>0$. Then $\pi_\ast:\pi_1(X)\rightarrow
\pi_1^{orb}(\Sigma)$ is an isomorphism (cf. Theorem 2.3 in page 158 of \cite{FM}). Taking the 
abelianization, we see that the torsion subgroup of $H_1(X)$ is the quotient of $\oplus_{j=1}^n 
\Z_{m_j}$ by the cyclic subgroup generated by $(1,1,\cdots,1)$ (cf. Corollary 2.4 in page 158 
of \cite{FM}). By the universal coefficient theorem, $|Tor H_2(X)|\geq m(p-1)\geq p$. 

Suppose $c_2(X)=0$. Then since $p>5$, it follows that $\pi_1^{orb}(\Sigma)$ is infinite, and hence
there is an exact sequence (cf. Lemma 7.3 in page 198 of \cite{FM})
$$
\begin{array}{ccccccccc}
0 & \rightarrow & \Z\oplus\Z & \stackrel{i_\ast}{\rightarrow} & \pi_1(X) & \stackrel{\pi_\ast}{\rightarrow}
& \pi_1^{orb}(\Sigma) & \rightarrow & \{1\}.
\end{array}
$$
By a similar argument, we obtain in this case $|Tor H_2(X)|\geq m(p-3)\geq p$. 
\end{proof}

We have thus showed that assuming $p>\text{max}(c_2(X),|Tor H_2(X)|)$, 
$\pi^{-1}(w_1),\pi^{-1}(w_2)$ must be the only two singular fibers. We claim that
the monodromy around each of them must be of infinite order, i.e., it must be conjugate to
$$
\pm \left (\begin{array}{cc}
1 & b\\
0 & 1\\
\end{array} \right ), \mbox { where } b\neq 0.
$$
The argument goes as follows. If the monodromy around a fiber is of finite order, then
the Euler characteristic of the fiber is less than $12$. Thus if the monodromy around $\pi^{-1}(w_1)$
or $\pi^{-1}(w_2)$ were of finite order, $c_2(X)=12d=12$ must be true. In this case, 
$\kappa(X)\geq 0$ implies that both $\pi^{-1}(w_1),\pi^{-1}(w_2)$ must be multiple fibers. 
On the other hand, it is known that
the monodromy around a multiple fiber whose reduction is singular must be of infinite order, 
hence the claim, cf. \cite{BPV}. 

With this understood, the following lemma shows that $\bar{g}=1$ is also true 
 in the case of $\text{genus}(\Sigma)=0$.

\begin{lemma}
For any $d>0$, there exist no $b>0$, $A\in SL(2;\Z)$,  such that 
$$
\pm \left (\begin{array}{cc}
1 & -(12d-b)\\
0 & 1\\
\end{array} \right )= A^{-1}\left (\begin{array}{cc}
1 & b\\
0 & 1\\
\end{array} \right )A. 
$$
\end{lemma}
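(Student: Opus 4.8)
The plan is to reduce the claimed non-existence to an elementary linear-algebra computation over $\Z$. First I would dispose of the minus sign: the matrix $-\begin{pmatrix}1 & -(12d-b)\\0&1\end{pmatrix}$ has trace $-2$, whereas $A^{-1}\begin{pmatrix}1&b\\0&1\end{pmatrix}A$ has trace $2$ for every $A$, since trace is a conjugacy invariant. Hence the minus sign is impossible regardless of $b$ and $d$, and only the plus sign remains to be ruled out.

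For the plus sign, set $c:=b-12d=-(12d-b)$ and write $A=\begin{pmatrix}p&q\\r&s\end{pmatrix}\in SL(2;\Z)$, so that the asserted identity is equivalent to $\begin{pmatrix}1&b\\0&1\end{pmatrix}A=A\begin{pmatrix}1&c\\0&1\end{pmatrix}$. I would multiply out both sides and compare entries. The $(1,1)$ entry gives $p+br=p$, i.e.\ $br=0$; since $b>0$ this forces $r=0$. Then $\det A=ps-qr=ps=1$, so $p=s$ and $p\in\{1,-1\}$. Finally the $(1,2)$ entry reads $q+bs=cp+q$, i.e.\ $bs=cp$; cancelling the common nonzero value $p=s$ yields $b=c$.

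The proof then concludes by contradiction: $b=c=b-12d$ forces $12d=0$, contradicting the hypothesis $d>0$. I do not expect a genuine obstacle here, as the argument is short and self-contained; the only points requiring a little care are to keep both signs in play at the outset and to note that the reduction $r=0$ uses $b\neq 0$, which is guaranteed by $b>0$. (Conceptually this is the statement that two nontrivial parabolics $\begin{pmatrix}1&b\\0&1\end{pmatrix}$ and $\begin{pmatrix}1&c\\0&1\end{pmatrix}$ are $SL(2;\Z)$-conjugate only when $b=c$; one could instead combine the fact that the $SL(2;\R)$-conjugacy class of such a parabolic is determined by the sign of $b$ with a lattice argument forcing $|b|=|c|$, but the direct matrix computation above is cleaner.)
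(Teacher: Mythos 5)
Your proof is correct and takes essentially the same approach as the paper: both reduce to an entry-by-entry matrix computation, using $b\neq 0$ to force the lower-left entry of $A$ to vanish and the determinant condition to conclude that conjugation preserves the off-diagonal entry, whence $12d=0$. Your preliminary trace argument for the minus sign is a clean touch, but the paper's computation handles that case implicitly in the same way.
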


\begin{proof}
Suppose that $b, A$ exist which satisfy the above equation. 
Write $A=\left (\begin{array}{cc}
x & y\\
z & w\\
\end{array} \right )$, where $xw-yz=1$. Then $A^{-1}=\left (\begin{array}{cc}
w & -y\\
-z & x\\
\end{array} \right )$. Now 
$$
A^{-1}\left (\begin{array}{cc}
1 & b\\
0 & 1\\
\end{array} \right )A=\left (\begin{array}{cc}
w & -y\\
-z & x\\
\end{array} \right ) \left (\begin{array}{cc}
1 & b\\
0 & 1\\
\end{array} \right )\left (\begin{array}{cc}
x & y\\
z & w\\
\end{array} \right )=\left (\begin{array}{cc}
1+wzb & w^2b\\
-z^2 b & 1+wzb\\
\end{array} \right ),
$$
which gives $z=0$, $w=\pm 1$, i.e., $A=\pm I$. But this is easily seen to contradict $d>0$.

\end{proof}

Now with $\bar{g}=1$, we see that $g$ leaves each fiber of the elliptic fibration invariant. 
Moreover, assuming $p>6$, $g$ must induce a translation along each regular fiber. Finally,
since $g$ is homologically trivial over $\Q$, it follows easily that each irreducible component 
of a fiber must be invariant. 

A $\Z_p$-action on $\s^2$ has two fixed points. Simple inspection shows that a singular fiber
whose reduction is singular but not of type $I_b$, $b>0$, must contain a point $x$ such that
(1) $\pi$ is of maximal rank at $x$, (2) $x$ is a fixed point of $g$. Moreover, one can argue 
as in \cite{CK}, $\S 3$, that for large enough $p$, both weights of the action of $g$ at $x$ are
non-zero, which clearly contradicts the fact that $\bar{g}=1$. On the other hand, an invariant 
fiber whose reduction is of type $I_b$, $b>0$, either contains only isolated fixed 
points of weights $(1,p-1)$, or has an irreducible component fixed under $g$
(cf. Proposition 3.7(3) in \cite{CK}). But the latter also contradicts 
the fact that $\bar{g}=1$. 

Summing up our discussion, we see that $g$ acts on $X$ with a nonempty set of isolated fixed points,
all of weights $(1,p-1)$. As we argued in \cite{CK}, the $G$-signature theorem implies that $g=1$. 

\vspace{3mm}

{\bf Case (ii)} $c_2(X)=0$.  In this case, the only singular fibers are multiple fibers with smooth reduction.

We consider first the case where $X$ is the original complex surface, i.e., it is already minimal and
need not to be further blown down. With this understood, we divide our discussions into two cases.

(1) Suppose $X$ contains no multiple fibers. In this case, $\pi: X\rightarrow \Sigma$ is called an 
elliptic fiber bundle.
Note that since $\kappa(X)\geq 0$, we must have $\text{genus}(\Sigma)>0$. 

Let $E$ be the typical fiber of $X$ which is an elliptic curve.
Then the structure group of $X$ as an elliptic fiber bundle is 
contained in the group $A(E)$ of biholomorphisms 
of $E$. By fixing an origin $0\in E$, $E$ itself may be regarded as a normal subgroup of $A(E)$
(acting as translations on $E$), and the quotient group $A(E)/E$ is a cyclic group of order 
$n$, where 
\begin{eqnarray*}
n=4 & \mbox{ if } E=\C/(\Z\oplus \Z i)\\
n=6 & \mbox{ if } E=\C/(\Z\oplus \Z\omega), \;\;\omega=\exp(\pi i/3)\\
n=2 & \mbox{ in all other cases. }
\end{eqnarray*}
The bundle $\pi: X\rightarrow \Sigma$ is called a principal bundle if its structure group 
can be reduced to $E\subset A(E)$. 

Clearly, if $X$ is a principal bundle, it admits a holomorphic circle action 
which acts as translations on each fiber.  Assume $X$ is not a principal bundle. Then 
there exist a principal elliptic fiber bundle 
$\pi^\prime:X^\prime\rightarrow \Sigma^\prime$ and unramified coverings $\tau:X^\prime\rightarrow 
X^\prime$, $\sigma:\Sigma^\prime\rightarrow \Sigma^\prime$, such that $X=X^\prime/\tau$ and
$\Sigma=\Sigma^\prime/\sigma$. Moreover, since $X$ is not principal, the induced action of $\tau$
on $E$ defines a non-trivial element in $A(E)/E$. See \cite{BPV}, Chapter V, $\S 5$ for more details.

With this understood, suppose $\text{genus}(\Sigma)>1$. Let $g$ be an order-$p$ automorphism on
$X$ which is homologically trivial. Then by Lemma 5.1, $g$ must leave each fiber invariant. Moreover,
when $p>6$, the induced action of $g$ on the fiber $E$ must be a translation which commutes with the induced action of $\tau$ on $E$. It is clear that the order $p$ of such a translation is universally bounded.

Suppose $\text{genus}(\Sigma)=1$. Then $X$ is called bi-elliptic (see p. 119 of \cite{BPV} for a
complete list). A bi-elliptic surface admits holomorphic circle actions which induce (non-trivial) 
translations on the base $\Sigma$. This finishes the proof for the case where $X$ contains no 
multiple fibers. 

(2) Suppose $X$ contains at least one multiple fiber. First, we remark that the condition $\kappa(X)
\geq 0$ implies that, in the case of $\text{genus}(\Sigma)=0$, $X$ contains at least three multiple fibers
(cf. \cite{FM}). 
With Lemmas 5.1, 5.2, we see that there is a universal constant $c>0$ such that if $p>c(1+|Tor H_2(X)|)$,
a homologically trivial order-$p$ automorphism of $X$ must act trivially on the base $\Sigma$ and as a translation along each fiber. 
With this understood, we will reduce the proof for this case to the previous 
case using the following observation: {\it the log transforms on a regular fiber may be done equivariantly 
with respect to translations along the fiber}. 

Indeed, let $\pi^\prime: Y^\prime\rightarrow \Delta(t)$ be an elliptic fibration, where $\Delta(t)$ 
is a disc with 
coordinate $t$ centered at the origin, and $Y^\prime=\C\times \Delta(t)/(\Z\oplus \Z\omega(t))$.
Here $\omega(t)$ is a holomorphic function on $\Delta(t)$ with $Im \;\omega(t)\neq 0$. For any
$m>1$ and $k>0$ such that $m,k$ are relatively prime, a log transform with multiplicity $m$ is
done as follows. Consider 
$$
\C\times \Delta(s)/ (\Z\oplus \Z\omega(s^m))
$$
with the cyclic group action of order $m$ generated by 
$$
(z,s)\mapsto (z+k/m, \exp(2\pi i/m) s). 
$$
Let $Y$ be the quotient, which carries a natural elliptic fibration $\pi: Y
\rightarrow \Delta(t)$ given by $(z,s)\mapsto t=s^m$. Note that the fiber of $\pi$ at $t=0$ 
is a multiple fiber. With this understood, the log transform is the operation which replaces the
fiber $(\pi^\prime)^{-1}(0)$ in $Y^\prime$ by the fiber $\pi^{-1}(0)$ in $Y$ (which is a multiple fiber), 
via the fiber-preserving 
biholomorphism $f: Y\setminus \pi^{-1}(0)\rightarrow Y^\prime\setminus (\pi^\prime)^{-1}(0)$, where
$f$ is induced by $\hat{f}: \C\times (\Delta(s)\setminus \{0\})/ (\Z\oplus \Z\omega(s^m))
\rightarrow \C\times (\Delta(t)\setminus \{0\})/(\Z\oplus \Z\omega(t))$ (cf. \cite{BPV}, p. 216):
$$
\hat{f}: (z,s)\mapsto (z-(k/2\pi i)\ln s, s^m).
$$

Now suppose an automorphism $g$ of $\pi: Y\rightarrow \Delta(t)$ is given by 
a translation along the fiber. Then on the ramified cover $\C\times \Delta(s)/ (\Z\oplus \Z\omega(s^m))$,
it can be written as 
$$
g: (z,s)\mapsto (z+u(s^m), s)
$$ 
for some holomorphic function $u(t)$ over $\Delta(t)$. Let $g^\prime:(z,t)\mapsto (z+u(t),t)$ be the
automorphism of $\C\times (\Delta(t)\setminus \{0\})/(\Z\oplus \Z\omega(t))$. Then one has 
$g^\prime\circ \hat{f}=\hat{f}\circ g$. Hence after an inverse log transform performed to 
$\pi: Y\rightarrow\Delta(t)$, there is an induced automorphism $g^\prime$ of 
$\pi^\prime: Y^\prime\rightarrow \Delta(t)$, which is given by the translation 
$$
(z,t)\mapsto (z+u(t), t). 
$$ 

With the preceding understood, suppose $g$ is an order-$p$ automorphism of $X$ which acts as
a translation on each fiber. We perform an inverse log transform to $X$ at each multiple fiber, and let
$\pi^\prime:X^\prime\rightarrow \Sigma$ be the resulting elliptic surface. Then $\pi^\prime:X^\prime\rightarrow \Sigma$ is an elliptic fiber bundle, and by our previous discussion, $X^\prime$ inherits
an order-$p$ automorphism $g^\prime$ which acts as a translation on each fiber. Now observe 
that there are two possibilities: (i) $X^\prime$ is a principle bundle and there is a holomorphic circle 
action acting as translations along each fiber, or (ii) there is a universal constant $c>0$ such that 
the order of $g^\prime$ is bounded by $c$, i.e., $p\leq c$.
In the former case, it follows that $X$ admits a holomorphic circle action, and in the latter case,
we see that the order of $g$ is universally bounded. This finishes the proof for the case where
$X$ contains at least one multiple fiber. 

Finally, we consider the case where the original complex surface is not minimal. 
In this case, the key observation is that after blowing down, the induced $\Z_p$-action always 
has a fixed point which is isolated. (The fixed point is the image of an exceptional divisor under the
blowing down.)  On the other hand, from our discussions above, we in fact proved more: {\it 
if the order of a homologically trivial automorphism $g$ of a minimal elliptic surface of $c_2=0$ and
$\kappa\geq 0$ is greater than $c(1+|Tor\; H_2|)$ for some universal constant $c>0$, then $g$ must 
come from a fixed-point free holomorphic circle action on the surface. Moreover, the set of exceptional 
orbits of the circle action is either empty or a union of embedded tori}.

This finishes the proof of  Theorem 1.1.

\vspace{3mm}

\noindent{\bf Acknowledgments}: The author has benefited tremendously from 
collaborations with Slawomir Kwasik \cite{CK,CK1,CK2}, whom he wishes 
to thank warmly. The author is also very grateful to Tian-Jun Li for 
his interests in this work and for several helpful communications, 
and particularly, for pointing out an error in the original
version of this paper. Part of this work was carried out 
during the author's visit to the Max Planck Institute for Mathematics 
in the Sciences, Leipzig. The author wishes to thank the institute for 
its hospitality and financial support during the visit. Finally, special thanks
go to an anonymous referee for his/her kind communication regarding the repeated 
knot surgery discussed in Example 1.3. 

\vspace{3mm}

\vspace{2mm}

{\Small University of Massachusetts, Amherst.\\
{\it E-mail:} wchen@math.umass.edu

\end{document}